\DeclareSymbolFont{rsfscript}{OMS}{rsfs}{m}{n}
\DeclareSymbolFontAlphabet{\mathrsfs}{rsfscript}
\numberwithin{equation}{section}
\newtheorem{prop}{Proposition}[section]
\newtheorem{lem}[prop]{Lemma}
\newtheorem{cor}[prop]{Corollary}
\def\Jc{\mathrel{\mathrsfs{J}}}
\def\Dc{\mathrel{\mathrsfs{D}}}
\def\Hc{\mathrel{\mathrsfs{H}}}
\def\Lc{\mathrel{\mathrsfs{L}}}
\def\Rc{\mathrel{\mathrsfs{R}}}
\def\Kc{\mathrel{\mathrsfs{K}}}
\def\Rcc{\mathrm{R}}
\def\Hcc{\mathrm{H}}
\def\Lcc{\mathrm{L}}
\def\Dcc{\mathrm{D}}
\def\Kcc{\mathrm{K}}
\def\ep{\epsilon} 
\def\up{\upsilon} 
\def\la{\lambda} 
\def\ka{\kappa} 
\def\be{\beta} 
\def\si{\sigma} 
\def\al{\alpha}
\def\ap{\approx}
\def\xr{\xrightarrow}
\def\cev{\overset{{}_{\shortleftarrow}}}
\def\cevl{\overset{{}_{\longleftarrow}}}
\def\cevm{\overset{{}_{\leftarrow}}}
\def\ol{\overline}
\def\c{\mathrm{c}}
\def\l{\mathrm{l}}
\newcommand{\oX}{\ol{X}}
\renewcommand{\iff}{if and only if}
\DeclareMathOperator{\ft}{FI}
\DeclareMathOperator{\wig}{WIG}
\DeclareMathOperator{\wgen}{WG}
\DeclareMathOperator{\La}{L}
\DeclareMathOperator{\Mr}{MR}
\DeclareMathOperator{\M}{M}
\DeclareMathOperator{\G}{G}
\DeclareMathOperator{\Ca}{C}
\def\ftx{\ft(X)}
\def\ftn{\ft_n}
\def\mft{\ft_m}
\def\ftt{\ft_2}
\def\ftxm{\ft^1(X)}
\def\ftnm{\ft_n^1}
\def\wg{\wig(X)}
\def\Gx{\G(X)}
\def\Gxp{\G(X)^+}
\def\Gix{\G_i(X)}
\def\mrx{\Mr(X)}
\def\mx{\M(X)}
\def\mxm{\M^1(X)}
\def\lx{\La(X)}
\def\lo{\La_1(X)}
\def\lop{\La_1^+(X)}
\def\lom{\La_1^-(X)}
\def\ltp{\La_2^+(X)}
\def\ltm{\La_2^-(X)}
\def\lt{\La_2(X)}
\def\cx{\Ca(X)}
\title[]{Regular semigroups weakly generated by idempotents}
\author{Lu\'\i s Oliveira}
\address{Departamento de Matem\'atica,
Faculdade de Ci\^encias da Universidade do Porto,
R. Campo Alegre, 687, 4169-007 Porto, Portugal}
\email{loliveir@fc.up.pt}
\begin{document}

\begin{abstract}
A regular semigroup is weakly generated by a set $X$ if it has no proper regular subsemigroups containing $X$. In this paper, we study the regular semigroups weakly generated by idempotents.  We show there exists a regular semigroup $\ftx$ weakly generated by $|X|$ idempotents such that all other regular semigroups weakly generated by $|X|$ idempotents are homomorphic images of $\ftx$. The semigroup $\ftx$ is defined by a presentation $\langle \Gx,\rho_e\cup\rho_s\rangle$ and its structure is studied. Although each of the sets $\Gx$, $\rho_e$, and $\rho_s$ is infinite for $|X|\geq 2$, we show that the word problem is decidable as each congruence class has a ``canonical form''. If $\ftn$ denotes $\ftx$ for $|X|=n$, we prove also that $\ftt$ contains copies of all $\ftn$ as subsemigroups. As a consequence, we conclude that $(i)$ all regular semigroups weakly generated by a finite set of idempotents, which include all finitely idempotent generated regular semigroups, strongly divide $\ftt$; and $(ii)$ all finite semigroups divide $\ftt$.
\end{abstract} 

\subjclass[2010]{(Primary) 20M17, (Secondary) 20M05, 20M10}
\keywords{Regular semigroup, Idempotent generated semigroup, Word problem}

\maketitle

\section{Introduction}

Let $S$ be a semigroup. An \emph{inverse} of an element $s\in S$ is another element $s'\in S$ such that $ss's=s$ and $s'ss'=s'$. We denote by $V(s)$ the set of all inverses of $s$ in $S$. A semigroup where all elements have at least one inverse, that is, no set $V(s)$ is empty, is called \emph{regular}. It is well known that not all subsemigroups of regular semigroups are regular. Thus, for classes of regular semigroups, the usual concept of variety of algebras is not appropriate. Instead, the concept of e-variety is used. An \emph{e-variety} \cite{ha1,ks1} of regular semigroups is a class of these algebras closed for homomorphic images, direct products, and regular subsemigroups.

For e-varieties, there is a concept similar to the concept of `free object' for varieties. If $X$ is a nonempty set, let $X'=\{x'\,|\;x\in X\}$ be a disjoint copy of $X$, and set $\oX=X\cup X'$. A \emph{matched mapping} is a mapping $\phi:\oX\to S$ such that $x'\phi$ is an inverse of $x\phi$ in $S$, for all $x\in X$. If $\bf V$ is an e-variety of regular semigroups, a \emph{bifree object} in $\bf V$ on $X$ is a semigroup $BF{\bf V}(X)\in {\bf V}$, together with a matched mapping $\iota:\oX\to BF{\bf V}(X)$, such that any other matched mapping $\phi:\oX\to S$, with $S\in {\bf V}$, has a unique extension into a homomorphism $\varphi:BF{\bf V}(X)\to S$.

There is, however, an important difference between varieties and e-varie\-ties: not all e-varieties have bifree objects. In fact, Yeh \cite{yeh} proved that only the e-varieties of locally inverse semigroups and the e-varieties of regular $E$-solid semigroups have bifree objects on a set $X$ with at least two elements. Yeh showed that if $S$ is a locally inverse semigroup or a regular $E$-solid semigroup, and if $\phi:\oX\to S$ is a matched mapping, then $S$ has the smallest regular subsemigroup containing $\oX\phi$. Using this fact, Yeh was able to prove that all e-varieties of locally inverse semigroups and all e-varieties of regular $E$-solid semigroups have bifree objects on any set $X$. 

In addition, Yeh \cite{yeh} constructed a finite semigroup $S$, together with a matched mapping $\phi:\oX\to S$ with $|X|=2$, such that $S$ had no smallest regular subsemigroup containing $\oX\phi$. The semigroup $S$ had two regular subsemigroups, $S_1$ and $S_2$, both containing $\oX\phi$, such that $S_1\cap S_2$ was not regular. Then, Yeh showed that $S$ belongs to all e-varieties $\bf V$ containing non-locally inverse and non-regular $E$-solid semigroups, and concluded that $\bf V$ had no bifree objects on $X$ since, otherwise, $\phi$ would not have a unique extension. It was then obvious that $\bf V$ had also no bifree objects on any set with more than two elements.

In this paper, a regular semigroup $S$ with no proper regular subsemigroups containing a set $X$ is said to be \emph{weakly generated} by $X$. Note that this does not mean that $S$ is generated by $X$ itself since, very often, the subsemigroup of $S$ generated by $X$ is a proper non-regular subsemigroup. As demonstrated by the semigroup constructed by Yeh mentioned above, a regular semigroup can have several distinct regular subsemigroups weakly generated by the same set $X$. In fact, as described above, the existence of bifree objects on an e-variety $\bf V$ is closely related with the property that all semigroups $S\in{\bf V}$ have a unique regular subsemigroup weakly generated by $\oX\phi$, for any matched mapping $\phi:\oX\to S$.

Although regular semigroups weakly generated by a set seem to be important in the theory of regular semigroups, little research has been done about their structure. The goal of this paper is to contribute to the knowledge about the structure of those semigroups. However, we will focus only on the case of the regular semigroups weakly generated by idempotents. We took this option not only because it is a simpler situation to begin with, but also because regular semigroups weakly generated by a set $X$ of idempotents are, in fact, regular idempotent generated semigroups, although usually not generated by $X$.

The (regular) idempotent generated semigroups have been a topic of great interest in Semigroup Theory with a vast literature on the subject. One of the reasons is the existence of natural examples of these semigroups, such as the semigroup of all singular transformations on a finite set and the semigroup of all singular $n\times n$ matrices over a field. From the pure semigroup theoretical point of view, the interest in these semigroups comes also from both the facts that all (finite) semigroups can be embedded into (finite) idempotent generated semigroups and  that the set of idempotents of a semigroup carries a lot of information about the semigroup itself and its idempotent generated subsemigroup.

Nambooripad \cite{namb75} characterized the set of idempotents of regular semigroups abstractly as partial algebras, the so-called regular biordered sets, and proved \cite{namb79} the existence of a `free regular idempotent generated semigroup $RIG(E)$ on a regular biordered set $E$'. These results were then generalized by Easdown to all semigroups. Easdown \cite{ea85} characterized the set of idempotents of a semigroup abstractly as a biordered set and proved the existence of a `free idempotent generated semigroup $IG(E)$ on a biordered set $E$'. The semigroups $IG(E)$ and $RIG(E)$ have deserve a lot of attention since then.

The maximal subgroups of $IG(E)$ and $RIG(E)$ have been one of the main topics of research related to these semigroups. The reasons for this interest are explained in \cite{sz20}. The maximal subgroups of the earlier examples of $IG(E)$ and $RIG(E)$ were all free groups \cite{napa80, mc02}. This led to the conjecture that the maximal subgroups of both $IG(E)$ and $RIG(E)$ were always free. However, in 2009, Brittenham, Margolis and Meakin \cite{brmame09} gave a first example that disproves this conjecture. In fact, Gray and Ruskuc \cite{grru12} proved that quite the opposite occurs: every group is a maximal subgroup of some $IG(E)$ and of some $RIG(E)$. In the past decade much research has been carried on these semigroups related to their maximal subgroups (see, for example, \cite{dadogo19, dago16, dogr14, dogrru17, doru13, goya14, grru12b}). We refer the reader to the survey paper \cite{sz20} for further details on this topic. We just mention that the research presented here may contribute with new questions on this topic. We briefly address this idea in the last section of this paper.

The main achievement of this paper is a result showing that the class $\wg$ of all regular semigroups weakly generated by a set $X$ of idempotents has a regular semigroup $\ftx$ such that all other regular semigroups of $\wg$ are homomorphic images of $\ftx$. Thus $\ftx$ is a sort of ``free object'' in $\wg$. We will effectively construct the semigroup $\ftx$, and we will then study its structure. 

This paper is organized as follows. In the next section, we introduce the object of study, the regular semigroups weakly generated by a fixed set of idempotents, in a more formal setting. We also recall some basic concepts used in Semigroup Theory that we will need for the remainder of the paper. 

In Section 3 we introduce the semigroup $\ftx$. We begin by constructing its infinite set $\Gx$ of generators from $X$. Then, we define $\ftx$ as a quotient semigroup $\Gx^+/\rho$, where $\rho$ is the congruence generated by two infinite sets, $\rho_e$ and $\rho_s$, of relations. Thus, $\ftx$ is introduced as the semigroup given by a presentation $\langle X,\rho_e\cup\rho_s\rangle$, where both the set of generators and the set of generating relations are infinite. However, although both $\Gx$ and $\rho_e\cup\rho_s$ are infinite sets, we can show that each $\rho$-class has a canonical element, called a mountain, thus solving the word problem for this presentation. We end Section 3 by constructing a model $M(X)$ for $\ftx$, where the elements of $M(X)$ are precisely the mountains.

In Section 3 we prove also that $\ftx$ is a regular semigroup and that the elements of $\Gx$ are idempotents. But, to advance and conclude that $\ftx\in\wg$, we need to study the structure of $\ftx$ first. This study will be done at the beginning of Section 4. The remainder of Section 4 is devoted to proving that $\ftx\in\wg$. Finally, in Section 5, we prove that all semigroups from $\wg$ are homomorphic images of $\ftx$. However, we also observe that the converse is not true, that is, we construct an example of a homomorphic image of $\ftx$ that is not weakly generated by $X$. We end Section 5 with a brief reflection on decidability questions that can now be posed (and are still open) about regular semigroups weakly generated by a set of idempotents.

We will denote $\ftx$ by $\ftn$ if $|X|=n$. In Section 6, we compare the different $\ftn$ and prove, somehow surprisingly, that each $\ftn$ can be embedded into $\ftt$. This result show us how complex must be the structure of $\ftt$ since we can immediately deduce that all regular semigroups weakly generated by a finite set of idempotents strongly divide $\ftt$.

We return to the structure of $\ftx$ in Section 7. In that section, we compare the different $\Dc$-classes of $\ftx$ and analyze its idempotents. One of the goals is to get more information about the structure of the biordered set of idempotents of $\ftx$. However, as it will become clear, more research is needed on this topic since it is not immediate to identify if an element of $\ftx$ is an idempotent.

In the last section of this paper we make some considerations about possible avenues for future research related with the research presented here.

\section{Preliminaries}

Let $A$ be a nonempty subset of a semigroup $S$. A regular subsemigroup $T$ of $S$ containing $A$ is said to be weakly generated by $A$ if $T$ has no proper regular subsemigroups containing $A$. Note that, very often, $T$ is not generated by $A$ as a semigroup. In fact, as mentioned earlier, $S$ can have several (or none) distinct regular subsemigroups weakly generated by the same subset $A$. If $S$ is regular and has no proper regular subsemigroups containing $A$, then we say that $S$ is weakly generated by $A$. 

More generally, if $X$ is a formal nonempty set and $S$ is an arbitrary regular semigroup, we shall say that $S$ is weakly generated by $X$ if there exists a one-to-one mapping $\phi:X\to S$ such that $S$ is weakly generated by the subset $X\phi$. We shall denote by $\wgen(X)$ the category of all regular semigroups weakly generated by $X$, that is,
\begin{itemize}
\item[$(i)$] the objects of $\wgen(X)$ are the pairs $(S,\phi:X\to S)$, where $S$ is a regular semigroup and $\phi:X\to S$ is a one-to-one mapping such that $S$ is weakly generated by the subset $X\phi$; and
\item[$(ii)$] the morphisms from $(S_1,\phi_1:X\to S_1)$ to $(S_2,\phi_2:X\to S_2)$ are the usual semigroup homomorphisms $\varphi:S_1\to S_2$ that verify also $\phi_1\varphi=\phi_2$.
\end{itemize}
Given an object $(S,\phi:X\to S)\in\wgen (X)$, we shall identify the set $X$ with its image under $\phi$. In this manner, we avoid having to refer to the mapping $\phi$, and we shall write only $S\in\wgen(X)$. Note that, in particular, given $S,T\in\wgen(X)$, a morphism $\varphi:S\to T$ is a usual semigroup homomorphism such that $\varphi_{|X}$ is the `identity' mapping.

In this paper, we will be working simultaneously in the category $\wgen(X)$ and in the usual category of all semigroups. So, we must be careful and have a way to clearly distinguish the two cases. If we say that $\varphi:S\to T$ is a morphism, then we are implicitly assuming that $S,T\in\wgen(X)$, for some nonempty set $X$, and that $\varphi$ is a morphism in the category $\wgen(X)$. Thus, if we say that $T$ is a morphic image of $S$, then we are again implicitly assuming that $S,T\in\wgen(X)$, for some nonempty set $X$, and that there exists a surjective morphism $\varphi:S\to T$ in $\wgen(X)$. On the other hand, if we say that $\varphi:S\to T$ is a homomorphism, then this means that we are working in the usual category of all semigroups and that $\varphi$ is just a usual semigroup homomorphism. Thus, if we say that $T$ is a homomorphic image of $S$, then we are just saying that there exists a surjective semigroup homomorphism $\varphi:S\to T$. 

To clarify the usage of this terminology, if we know that $S\in\wgen(X)$, that $X$ is a subset of $T$, and that $\varphi:S\to T$ is a semigroup homomorphism such that $\varphi_{|X}$ is the identity mapping, but we are not sure that $T\in\wgen(X)$, then we shall not call $\varphi$ a morphism; in this case, we shall write that $\varphi:S\to T$ is a homomorphism such that $\varphi_{|X}$ is the identity mapping. Summing up, we use the terms morphism and morphic image if we know that we are working in $\wgen(X)$, for some nonempty set $X$; otherwise, we use the terms homomorphism and homomorphic image.

The goal of this paper is to study the structure of the objects from a particular subcategory of $\wgen(X)$, namely
$$\wg=\{(S,\phi:X\to S)\in\wgen(X)\,|\; X\phi\subseteq E(S)\}\,,$$
where $E(S)$ denotes the set of idempotents of $S$, as usual. As above, we continue to identify $X$ with its image under $\phi$. In this case, $X$ will be always a set of idempotents of $S$. The semigroups from $\wg$ will be said to be weakly idempotent generated by $X$. We remark that a regular semigroup $S$ weakly idempotent generated by $X$ is always an idempotent generated regular semigroup. In fact, since $\langle E(S)\rangle$ is always a regular subsemigroup of $S$ (see \cite{fitz}) and it constains $X$, we must have $S=\langle E(S)\rangle$.

Given a formal nonempty set $X$, we denote by $X^+$ and $X^*$ the free semigroup and the free monoid on $X$, respectively, as usual. The elements of $X$ are called \emph{letters}, while the elements of $X^*$ are called \emph{words}. The \emph{content} $\c(u)$ of a word $u$ is the set of all letters from $X$ that occur in $u$. Usually, one denotes the empty word of $X^*$ by $1$. Given a word $u\in X^+$, $\si(u)$ and $\tau(u)$ will denote the first and the last letter of $u$, respectively. The \emph{length} $\l(u)$ of a word $u$ is the number of letters that occur in $u$. Thus $1$ is the only word of length $0$. If $\l(u)=n$ and $i\leq n$, then $\si_i(u)$ denotes the prefix of $u$ of length $i$. Similarly, $\tau_i(u)$ denotes the suffix of $u$ of length $i$. We will also need the notion of reverse of a word: if $u=x_1\cdots x_n$ with $x_i\in X$ for $i=1,\cdots,n$, then its \emph{reverse}  is the word $\cev{u}=x_n\cdots x_1$.

Let $S$ be a semigroup. The notions of left principal ideal, right principal ideal, and principal ideal induce the well-known Green relations: for any $s,t\in S$,
$$s\Lc t\Leftrightarrow S^1s= S^1t,\quad s\Rc t\Leftrightarrow sS^1= tS^1,\quad s\Jc t\Leftrightarrow S^1sS^1= S^1tS^1,$$
$$\Hc=\Lc\cap\Rc\qquad\mbox{ and }\qquad \Dc=\Lc\vee\Rc\,,$$
where $S^1$ is the monoid obtained from $S$ by adding an identity element if necessary. They induce also three quasi-orders:
$$s\leq_{\Lc} t\Leftrightarrow S^1s\subseteq S^1t,\quad s\leq_{\Rc} t\Leftrightarrow sS^1\subseteq tS^1,\quad s\leq_{\Jc} t\Leftrightarrow S^1sS^1\subseteq S^1tS^1.$$
We denote by $\Kcc_a$ the $\Kc$-class of the element $a$ for $\Kc\in\{\Hc,\Lc,\Rc,\Dc,\Jc\}$. 

If $S$ is a regular semigroup, then we can replace $S^1$ with $S$ in the definitions of the previous relations. There is also another important relation on $S$ for the cases where $S$ is regular, the \emph{natural partial order} $\leq\,$:
$$s\leq t\quad\Leftrightarrow\quad  s=et=tf \;\mbox{ for some } e,f\in E(S)\,.$$
It is well-known that, in the previous definition of the natural partial order, one can choose idempotents $e$ and $f$ such that $e\Rc s\Lc f$. We will use this fact later on this paper.

Given two idempotents $e$ and $f$ of $S$, the \emph{sandwich set} of $e$ and $f$ is the set
$$S(e,f)=V(ef)\cap E(fSe)\,.$$
There are other equivalent definitions for the sandwich set. One of then is $S(e,f)=fV(ef)e$. Another equivalent definition is 
$$S(e,f)=\{g\in E(S)\,|\; fg=g=ge \mbox{ and } egf=ef\}\,.$$
Note that $S(e,f)$ is always nonempty if $S$ is regular. In fact, $S(e,f)$ is always a rectangular band, and thus a subsemigroup, whenever it is nonempty. These sandwich sets have other interesting properties. For example, if $e,e_1,f,f_1\in E(S)$ are such that $e\Lc e_1$ and $f\Rc f_1$, then $S(e,f)=S(e_1,f_1)$. Hence, for regular semigroups, one can extend the definition of sandwich set to all elements of $S$ as follows: for $a,b\in S$, let $S(a,b)=S(a'a,bb')$ for some (any) $a'\in V(a)$ and $b'\in V(b)$.

\section{A idempotent generated regular semigroup $\ftx$}

The goal of this paper is to introduce, for each nonempty set $X$, a semigroup $\ftx$ and to show that $\ftx$ plays an important universal role in the category $\wg$. In this section, we introduce such semigroup. We begin by defining its set of generators $\Gx$. Then, $\ftx$ will be defined as a quotient of the free semigroup $\Gx^+$. After, we show there exists a canonical form for the elements of $\ftx$, and thus solving its word problem. This canonical form will allow us to prove that $\ftx$ is an idempotent generated regular semigroup. We end this section describing a model for $\ftx$.

Throughout this paper we will often need to refer to the entries of triples $g\in L\times C\times R$, where $L$, $C$ and $R$ are nonempty sets. For that purpose, we shall use the following notation without further comments: $g^l$, $g^c$ and $g^r$ refer to the left, middle and right entries of $g$, respectively. In other words, $g=(g^l,g^c,g^r)$.

Next, we define recursively a sequence $(\Gix)_{i\in\mathbb{N}_0}$ of nonempty sets $\Gix$. We start by setting $\G_0(X)=\{1\}$ and $\G_1(X)=X$, where $1$ is a new symbol not in $X$. For the definition of $\G_2(X)$ to make sense, we need also to agree on identifying each element $x\in X$ with the triple $(1,x,1)$, so that we can write $x^l=x^r=1$. Now, for $i\geq 2$, let 
$$\overline{\G}_i(X)=\G_{i-1}(X)\times \G_{i-2}(X)\times \G_{i-1}(X)$$
and
$$\Gix=\left\{g\in \overline{\G}_i(X)\,|\; g^l\neq g^r,\; g^c\in\{(g^l)^l,(g^l)^r\}\cap\{(g^r)^l,(g^r)^r\}\right\}\,.$$ 
Note that if $g\in \Gix$ for some $i\geq 2$, then also $g_1=(g^r,g^c,g^l)$ belongs to $\Gix$. Further, $g\neq g_1$ since $g^l\neq g^r$, and so 
$$\{(g,g^l,g_1),(g,g^r,g_1),(g_1,g^l,g),(g_1,g^r,g)\}\subseteq \G_{i+1}(X)\,.$$ 
Therefore, all $\Gix$ are nonempty sets.  

As a title of example, if $X=\{e,f\}$, then $\G_1(X)=X$, $\G_2(X)=\{e_1,f_1\}$ for 
$$e_1=(e,1,f)\quad\mbox{ and }\quad f_1=(f,1,e)\,,$$ 
and $\G_3(X)=\{h,h_1,h_2,h_3\}$\label{g123} for
$$h=(e_1,f,f_1),\;\; h_1=(e_1,e,f_1),\;\; h_2=(f_1,f,e_1)\;\;\mbox{ and }\;\; h_3=(f_1,e,e_1)\,.$$ 
The reader should keep these three sets in mind since they will be used in the examples of Section \ref{sec5}.

Let $\Gx=\cup_{i\in\mathbb{N}_0}\Gix$. The \emph{height} of $g\in \Gx$ is the index $\up(g)=i\in\mathbb{N}_0$ such that $g\in \Gix$. To work with the elements of $\Gx$, we need to work with triples nested inside triples, which themselves are nested inside other triples, and so on. Thus, we need to introduce some more notation to deal with the nested triples of $\Gx$. We begin by generalizing the notations $g^l$, $g^c$ and $g^r$. Let $u=x_1\cdots x_n$ be a nonempty word on the alphabet $\{c,l,r\}$, that is, $x_i\in\{c,l,r\}$ for $1\leq i\leq n$. Then $g^u=(((g^{x_1})^{x_2}\cdots )^{x_n}$. We define also $g^1=g$.

Note that, for $g\in \Gix$ with $i\geq 2$, $g^c$ is either the left or right entry of $g^l$. It will be useful to have a generic way for referring to the side of the entry $g^c$ inside $g^l$. So, let $g^{l_c}$ denote the entry $g^c$ inside $g^l$, that is, $l_c=l^2$ if $g^c=g^{l^2}$, or $l_c=lr$ if $g^c=g^{lr}$. Therefore, $g^c$ and $g^{l_c}$ represent the same element of $\Gx$, but they have different meanings: $g^c$ is the central entry of $g$, while $g^{l_c}$ is either the left entry or the right entry of $g^l$. Usually, there are also other elements $g_1\in \Gix$ with $g_1^l=g^l$, but $g_1^{l_c}\neq g^{l_c}$. This means that we cannot compute $g^{l_c}$ directly from $g^l$. In other words, we must always know $g$ for computing the entry $g^{l_c}$ of $g^l$.

Let $g^{l_o}$ denote the other entry of $g^l$ distinct from both $g^{l_c}$ and $g^{lc}$. Thus $g^{l_c},g^{l_o}\in \G_{i-2}(X)$ if $g\in \Gix$. As expected, we will use also the dual notations $g^{r_c}$ and $g^{r_o}$. Hence $g^{r_c}=g^c=g^{l_c}$ and $g^{r_o}\neq g^c\neq g^{l_o}$. Note, however, that $g^{r_o}$ and $g^{l_o}$ may be the same element of $\Gx$ (see, for example, the element $h$ of the set $\G_3(X)$ above). Now, if $u=x_1\cdots x_n$ with $x_i\in\{c,l,r,l_c,l_o,r_c,r_o\}$ for $1\leq i\leq n$, then let $g^u=(((g^{x_1})^{x_2})\cdots)^{x_n}$. For example, if $u=rl_oc$, then $g^u=((g^r)^{l_o})^c$.

Consider the free semigroup $\Gxp$ on $\Gx$. In this paper, if nothing is said in contrary, we will use $g$ and $h$ (with possible indices) to refer to elements of $\Gx$, and $u$ and $v$ (also with possible indices) to refer to words from $\Gxp$. There are special characteristics that some words from $\Gxp$ have that will be important to us. Next, we list some general terms and notations that we will use throughout this paper. 

\begin{description}
\item[Landscape] word $u=g_0\cdots g_n\in \Gxp$ ($n\geq 0$) such that either $g_{i-1}\in\{g_i^l,g_i^r\}$ or $g_i\in\{g_{i-1}^l,g_{i-1}^r\}$ for all $1\leq i\leq n$. We denote by $\lx$ the set of all landscapes of $\Gxp$. Note also that single letters $g\in \Gx$ are particular landscapes; whence $\Gx\subseteq \lx$. 
\item[Ridge] letter $g_i$ of a landscape such that $\up(g_{i-1})=\up(g_{i+1})=\up(g_i)-1$. Hence, a ridge is never the first nor the last letter of a landscape.
\item[Peak] highest ridge of a landscape $u$. Note that $u$ can have several peaks, but all have the same height. If $u$ has only one peak, we denote it by $\ka(u)$. The height $\up(u)$ of the landscape $u$ is the maximum between the height of its peaks, the height of $\si(u)$, and the height of $\tau(u)$. Note that this definition of height agrees with the notion of height for elements of $\Gx$ introduced earlier.
\item[River] letter $g_i$ of a landscape such that $\up(g_{i-1})=\up(g_{i+1})=\up(g_i)+1$. As with ridges, rivers are never endpoints of landscapes.
\item[Hill] $u=g_0\cdots g_n\in \lx$ with $n\geq 1$ such that either $g_{i-1}\in\{g_i^l,g_i^r\}$ for all $1\leq i\leq n$, or $g_i\in\{g_{i-1}^l,g_{i-1}^r\}$ for all $1\leq i\leq n$. In the former case we have an \emph{uphill} since $\up(g_i)=\up(g_{i-1})+1$, while on the latter case we have a \emph{downhill} since $\up(g_{i})=\up(g_{i-1})-1$. We denote by $\lo$, $\lop$ and $\lom$ the set of all hills, all uphills and all downhills, respectively. Further, $\ltp$ will denote the set of all landscapes composed by an uphill followed by a downhill.
\item[Valley] landscape composed by a downhill followed by an uphill. Thus, a valley has always one (and only one) river at its lower height letter. We denote by $\ltm$ the set of all valleys of $\Gxp$, and by $\lt$ the set $\ltp\cup\ltm$. 
\item[Canyon] valley $u$ with $\si(u)=\tau(u)$. We denote by $\cx$ the set of all canyons of $\Gxp$.
\item[Mountain range] landscape $u$ with $\si(u)=\tau(u)=1$. Thus, the height of a nontrivial mountain range is the height of its peaks. Note that all mountain ranges have odd length, and $u=1$ is the only mountain range with length $1$. We denote by $\mrx$ the set of all mountain ranges of $\Gxp$.
\item[Mountain] mountain range with no rivers. Thus, it is either the trivial mountain range $u=1$, or it is composed by an uphill followed by a downhill. Nontrivial mountains have only one ridge. We denote by $\mxm$ and $\mx$ the sets of all mountains and all nontrivial mountains, respectively. So, $\mx=\mrx\cap \ltp$.
\end{description}

We shall use the previous terminology also to refer to subwords of a given word $u\in \Gxp$. For example, a hill of $u\in\lx$ is a subword of $u$ that is also a hill. A hill of $u\in\lx$ is called maximal if it is not properly contained in another hill of $u$, while a valley of $u$ is called maximal if it is not properly contained in another valley of $u$. Thus, each nontrivial mountain range is uniquely decomposable into an initial maximal uphill, followed by a possible empty sequence of maximal valleys between its consecutive ridges, and ending with a maximal downhill. 
However, we must be careful since $u_1u_2$ is not a landscape if $u_1$ and $u_2$ are two landscapes such that $g=\tau(u_1)=\si(u_2)$: it appears a double $gg$ at the junction of $u_1$ with $u_2$. Let $u_1*u_2$ denote the landscape obtained from $u_1u_2$ by replacing $gg$ with $g$.

We associate a mountain to each $g\in \Gx\setminus\{1\}$ as follows:
$$\be_1(g)=\left\{\begin{array}{ll}
\!\!g^{c^n}g^{c^{n-1}l}g^{c^{n-1}}\cdots g^cg^lgg^rg^c\cdots g^{c^{n-1}}g^{c^{n-1}r}g^{c^n} \!&\mbox{if } \up(g)=2n \\ [.2cm]
\!\!1g^{c^n}g^{c^{n-1}l}g^{c^{n-1}}\!\cdots g^cg^lgg^rg^c\cdots g^{c^{n-1}}\!g^{c^{n-1}r}g^{c^n}1\!\! &\mbox{if } \up(g)=2n+1.\end{array}\right.$$
Thus, $\be_1(g)$ is a mountain composed by an uphill $\la_l(g)$, called its \emph{left hill}, followed by a downhill $\la_r(g)$, called its \emph{right hill}. Hence $\be_1(g)=\la_l(g)*\la_r(g)$. For technical reasons, we define also $\be_1(1)=\la_l(1)=\la_r(1)=1$.

Let $\rho$ be the smallest congruence on $\Gxp$ containing both sets
$$\rho_e=\{(1g,g),(g1,g),(g^2,g)\,|\;g\in \Gx\}$$
and
$$\rho_s=\left\{(g^cg^lg,g),(gg^rg^c,g),(g^rg^cgg^cg^l,g^r g^cg^l)\,|\; g\in \Gx \mbox{ and }\up(g)\geq 2\right\}.$$
We denote by $\ftxm$ the semigroup $\Gxp/\rho$. We can immediately see that $\rho_e$ turns $\ftxm$ into an idempotent generated monoid with identity element $1\rho$. Further, the class $1\rho$ is constituted by all words with content $\{1\}$. Let $ \Gx^{\oplus}$ be the subsemigroup of all words form $\Gxp$ with content different from $\{1\}$, and let $\ftx$ be the semigroup $\ftxm\setminus\{1\rho\}$. Then $\ftx= \Gx^{\oplus}/\rho$. We may write $\ftnm$ and $\ftn$ instead of $\ftxm$ and $\ftx$, respectively, if $|X|=n$. 

To avoid using a heavier notation, we will try not to write the congruence $\rho$ in our expressions. Thus, we will represent the $\rho$-class of $u$ by $[u]$. If $A$ is a subset of $\Gx^+$, then 
$$[A]=\{[u]\,|\; u\in A\}\,.$$ 
We will write also $u\ap v$ to indicate that $[u]=[v]$ in $\ftxm$. If we write $u=v$, we are effectively saying that $u$ and $v$ are the same word. For example, for $g\in \Gx$, $\be_1(g)\ap \la_l(g)\la_r(g)$ but $\be_1(g)\neq \la_l(g)\la_r(g)$ because of the double $gg$ that occurs at the junction of $\la_l(g)$ with $\la_r(g)$ in $\la_l(g)\la_r(g)$. 

\begin{lem}\label{prodelem}
Let $g,g_1\in \Gx$ with $\up(g)>1$. Then:
\begin{itemize}
\item[$(i)$] $g\ap g^cg\ap gg^c\ap gg^lg\ap gg^rg$ and $g^rgg^l\ap g^rg^cg^l$.
\item[$(ii)$] $[gg^r],\,[gg^l],\,[g^rg],\,[g^lg]\in E(\ftx)$.
\item[$(iii)$] $[g^lg]\Lc [g^rg]\Lc [g]\Rc [gg^r]\Rc [gg^l]$.
\item[$(iv)$] $[g]\in S([g^rg^c],[g^cg^l])$.
\item[$(v)$] $g_1\ap \be_1(g_1)$.
\end{itemize}
\end{lem}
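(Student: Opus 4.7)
The plan is to prove the five parts in sequence, using the defining relations $\rho_e$ (idempotency $g^2\ap g$ and unit laws) and $\rho_s$ (the three equations $g^cg^lg\ap g$, $gg^rg^c\ap g$, and $g^rg^cgg^cg^l\ap g^rg^cg^l$), together with a light strong induction on $\up(g)$ for parts (iv) and (v).

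For part (i), I first derive $g^cg\ap g\ap gg^c$ via $g^cg\ap g^c(g^cg^lg)\ap(g^c)^2g^lg\ap g^cg^lg\ap g$ and the dual computation. Inserting a $g^c$ then gives $gg^lg\ap(gg^c)g^lg=g(g^cg^lg)\ap g\cdot g\ap g$, and $gg^rg\ap gg^r(g^cg^lg)=(gg^rg^c)g^lg\ap g\cdot g^lg\ap g$ by the identity just proved. For $g^rgg^l\ap g^rg^cg^l$, I rewrite $gg^l\ap gg^cg^l$ (from $g\ap gg^c$) and $g^rg\ap g^rg^cg$ (from $g\ap g^cg$), so $g^rgg^l\ap g^rg^cgg^cg^l\ap g^rg^cg^l$ by the third $\rho_s$ relation. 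Part (ii) is then immediate: $(gg^r)^2=gg^r(gg^r)\ap gg^r$ via $gg^rg\ap g$, and the other three cases are analogous.

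Part (iii) also follows from (i): $g\ap g\cdot g^lg$ together with $g^lg=g^l\cdot g$ yields $[g]\Lc[g^lg]$, and symmetric arguments give the full chain $[g^lg]\Lc[g^rg]\Lc[g]\Rc[gg^r]\Rc[gg^l]$. For part (iv) I use the idempotent characterization $S(e,f)=\{h\in E\mid fh=h=he,\ ehf=ef\}$. The three sandwich identities reduce to the three $\rho_s$ equations, plus $(g^c)^2\ap g^c$ to equate $g^rg^cg^cg^l$ with $g^rg^cg^l$. That $[g^cg^l],[g^rg^c]\in E(\ftx)$ follows from part (ii) applied inductively to $g^l$ and $g^r$, since $g^c\in\{(g^l)^l,(g^l)^r\}\cap\{(g^r)^l,(g^r)^r\}$; the base case $\up(g)=2$ is immediate as $g^c=1$.

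Part (v) is the main obstacle. I argue by induction on $\up(g_1)$, strengthening the claim to $\la_l(g_1)\ap g_1\ap\la_r(g_1)$; this implies $g_1\ap\be_1(g_1)$ because $\be_1(g_1)=\la_l(g_1)*\la_r(g_1)$ differs from the plain concatenation $\la_l(g_1)\la_r(g_1)$ only by collapsing a duplicated $g_1g_1$ at the junction via $\rho_e$, so $\be_1(g_1)\ap\la_l(g_1)\la_r(g_1)\ap g_1\cdot g_1\ap g_1$. The cases $\up(g_1)\in\{0,1\}$ are immediate from $\rho_e$. For $\up(g_1)\geq 2$, inspection of the definition of $\be_1$ gives the recursion $\la_l(g_1)=\la_l(g_1^c)\cdot g_1^l\cdot g_1$ (and dually $\la_r(g_1)=g_1\cdot g_1^r\cdot\la_r(g_1^c)$), since stripping off the final ascent $g_1^l,g_1$ from $\la_l(g_1)$ leaves exactly the left hill of $g_1^c$. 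The inductive hypothesis applied to the lower-height element $g_1^c$ gives $\la_l(g_1^c)\ap g_1^c$, hence $\la_l(g_1)\ap g_1^cg_1^lg_1\ap g_1$ by $\rho_s$, and dually for $\la_r(g_1)$.
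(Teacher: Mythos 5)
Your proof is correct and follows essentially the same route as the paper: parts $(i)$--$(iv)$ are obtained by the same direct manipulations with $\rho_e$ and $\rho_s$ (including the use of $(ii)$ applied to $g^l$ and $g^r$ to see that $[g^cg^l]$ and $[g^rg^c]$ are idempotents in $(iv)$). Your part $(v)$, an induction on $\up(g_1)$ via the recursion $\la_l(g_1)=\la_l(g_1^c)\,g_1^l\,g_1$, is just the paper's telescoping identities $g^{c^i}g^{c^{i-1}l}g^{c^{i-1}}\ap g^{c^{i-1}}$ and $g^{c^{i-1}}g^{c^{i-1}r}g^{c^i}\ap g^{c^{i-1}}$ recast in recursive form.
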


\begin{proof}
Note that $gg^c\ap gg^rg^cg^c\ap gg^rg^c\ap g$ and $gg^lg\ap gg^cg^lg\ap g^2\ap g$ by definition of $\rho_e$ and $\rho_s$. We prove that $g^cg\ap g$ and $gg^rg\ap g$ similarly. Hence, we have also $g^rgg^l\ap g^rg^cgg^cg^l\ap g^rg^cg^l$, and we have finished the proof of $(i)$. Now, $(ii)$ and $(iii)$ follow from $gg^lg\ap g\ap gg^rg$, while $(iv)$ is just the definition of $\rho_s$ and of the sandwich set since both $[g^rg^c]$ and $[g^cg^l]$ are idempotents by $(ii)$. From the definition of $\rho$ we also conclude that 
$$g^{c^i}g^{c^{i-1}l}g^{c^{i-1}}\ap g^{c^{i-1}}\quad\mbox{ and }\quad g^{c^{i-1}}g^{c^{i-1}r}g^{c^i}\ap g^{c^{i-1}}$$
for all $i$ such that $0<2i\leq\up(g)$. Hence $\be_1(g)\ap g$. Clearly also $\be_1(g_1)\ap g_1$ if $\up(g_1)\leq 1$.
\end{proof}

Let $u=g_0\cdots g_n\in \Gxp$. Extend $\be_1$ to $u$ as follows:
$$\be_1(u)=\be_1(g_0)*\cdots*\be_1(g_n)\,.$$ 
Note that $\be_1(u)\in \mrx$. The following result is obvious from Lemma \ref{prodelem}.$(v)$.

\begin{lem}\label{m1}
$\be_1(u)\ap u$ for all $u\in \Gxp$.
\end{lem}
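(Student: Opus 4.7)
The plan is to reduce the statement to Lemma \ref{prodelem}.$(v)$ letterwise, using that $\rho$ is a congruence and that $\rho_e$ identifies $g^2$ with $g$. Write $u=g_0g_1\cdots g_n$ with $g_i\in \Gx$. By the definition given just before the lemma,
$$\be_1(u)=\be_1(g_0)*\be_1(g_1)*\cdots*\be_1(g_n).$$
The $*$-product differs from ordinary concatenation only at those junctions where the last letter of $\be_1(g_i)$ equals the first letter of $\be_1(g_{i+1})$; at such a junction a doubled occurrence $hh$ is collapsed to a single $h$. Since $(h^2,h)\in\rho_e$ for every $h\in\Gx$, this collapsing is $\rho$-preserving, so
$$\be_1(g_0)*\be_1(g_1)*\cdots*\be_1(g_n)\ \ap\ \be_1(g_0)\be_1(g_1)\cdots\be_1(g_n).$$

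Now I would apply Lemma \ref{prodelem}.$(v)$ to each factor, giving $\be_1(g_i)\ap g_i$, and invoke the fact that $\rho$ is a congruence on $\Gxp$ to conclude
$$\be_1(g_0)\be_1(g_1)\cdots\be_1(g_n)\ \ap\ g_0g_1\cdots g_n\ =\ u.$$
Chaining the two congruences gives $\be_1(u)\ap u$, as required. There is no genuine obstacle: the only subtlety is the distinction between $*$-product and free-semigroup concatenation, and this is absorbed by the idempotency relation $g^2\ap g$ from $\rho_e$. So the proof is indeed a one-line deduction from Lemma \ref{prodelem}.$(v)$, exactly as the authors claim.
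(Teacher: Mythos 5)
Your proof is correct and follows exactly the route the paper intends: the paper simply declares the lemma ``obvious from Lemma \ref{prodelem}.$(v)$'', and your argument spells out the only detail worth spelling out, namely that the $*$-junctions (which here always collapse a doubled $1$, since every $\be_1(g_i)$ begins and ends with $1$) are absorbed by the relation $(h^2,h)\in\rho_e$ before applying Lemma \ref{prodelem}.$(v)$ factorwise.
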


Let $u=g_0\cdots g_n\in \lx$ and assume that $g_i$ is a river of $u$. Set
$$v=\left\{\begin{array}{ll}
g_0\cdots g_{i-1}g_{i+2}\cdots g_n &\mbox{ if } g_{i-1}=g_{i+1}\,, \mbox{ or}\\ [.2cm]
g_0\cdots g_{i-1}g_i'g_{i+1}\cdots g_n\quad &\mbox{ if } g_{i-1}\neq g_{i+1}\,,
\end{array}\right.$$
where $g_i'=(g_{i+1},g_i, g_{i-1})\in \Gx$. Then $v$ is another landscape with length not greater than the length of $u$. Furthermore, $v\in \mrx$ if $u\in \mrx$. We say that $v$ is obtained from $u$ by \emph{uplifting a river} and write $u\to v$ (or $u\xrightarrow{g_i} v$ if one needs to identify the river uplifted). If $g_{i-1}\neq g_{i+1}$, then the uplifting of $g_i$ replaces the river $g_i$ with a ridge $g_i'$. Furthermore, $g_{i-1}$ and $g_{i+1}$ become rivers of $v$ unless they were ridges of $u$, respectively. If $g_{i-1}=g_{i+1}$, then the uplifting of $g_i$ eliminates the river $g_i$, but creates a new river $g_{i-1}$ if neither $g_{i-1}$ nor $g_{i+1}$ are ridges of $u$; otherwise, it just eliminates the river $g_i$ and creates no new ridge nor river. We use the symbol $\xrightarrow{*}$ to denote the reflexive and transitive closure of $\to$.

Let $u$ be a landscape of length $n$ and height $i$, and consider the set $U(u)=\{v\in\Gxp\,|\;u\xr{*}v\}$. Note that if $v\in U(u)$, then $v$ is landscape with length at most $n$, height at most $j=\lfloor i+n/2\rfloor$, and such that $\si(v)=\si(u)$ and $\tau(v)=\tau(u)$. Further, $v$ has less than $\lceil n/2\rceil$ rivers. Hence, we can record the number of rivers of each height of $v$ in a $j$-tuple $r(v)=(r^1(v),\cdots,r^j(v))$, where $r^k(v)$ is the number of rivers of $v$ of height $k-1$. Thus, the sum of all entries of $r(v)$ is less than $\lceil n/2\rceil$. Let $R(u)=\{r(v)\,|\; v\in U(u)\}$. Then $R(u)$ is a finite set and we can consider it ordered by the lexicographic order. Hence, the $j$-tuple with all entries equal to $0$ is the smallest $j$-tuple of $R(u)$, and it corresponds to the landscapes of $U(u)$ with no rivers, that is, the landscapes of $U(u)$ that belong also to $\ltp\cup\lo\cup \Gx$.

If $v\in U(u)$ and $v\to v_1$ by uplifting a river of height $k$, then $v_1\in U(u)$, $r^k(v_1)=r^k(v)-1$, $r^{k+1}(v)\leq r^{k+1}(v_1)\leq r^{k+1}(v)+2$, and $r^{k_1}(v_1)=r^{k_1}(v)$ for all other $k_1$. So $r(v_1)<r(v)$. Consequently, we cannot apply upliftings of rivers indefinitely to $u$, and we must always stop after a finite number of upliftings of rivers with a landscape with no rivers, that is, a landscape from $\ltp\cup\lo\cup \Gx$.

It is also easy to see that the uplifting of rivers is a commutative operation. In other words, if $g_i$ and $g_j$ are two rivers of $u\in\lx$, then the uplifting of $g_i$ followed by the uplifting of $g_j$ gives the same landscape as the uplifting of $g_j$ followed by the uplifting of $g_i$. Hence, the uplifting of rivers is a system of rules commonly known as a noetherian locally confluent system of rules. An important property of this kind of systems is that, independently of the order of the rules that we apply to an element, we must always stop after a finite number of steps with the same `reduced' element (see \cite{newman}). In the case considered here, this means that, independently of the order of upliftings of rivers that we apply to $u\in\lx$, we will always end up with the same landscape from $U(u)$ with no rivers. In particular, $U(u)$ has a unique landscape with no rivers. We designate it by $\be_2(u)$. Note further that $\be_2(u)=\be_2(v)$ if $u\in \lx$ and $u\xr{*} v$, and that $\be_2(u)\in\mxm$ if $u\in\mrx$.

\begin{lem}\label{m2}
$\be_2(u)\ap u$ for all $u\in \lx$.
\end{lem}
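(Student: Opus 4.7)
The natural strategy is to prove that each single uplifting preserves the $\rho$-class, i.e.\ $u\to v$ implies $u\approx v$. Combining this with the fact, already established in the discussion preceding the lemma, that $u\xr{*}\be_2(u)$, the conclusion $\be_2(u)\approx u$ then follows by transitivity of $\approx$.

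To prove $u\to v$ implies $u\approx v$, fix the uplifted river $g_i$ of $u=g_0\cdots g_n$. Since $\rho$ is a congruence and $u$, $v$ agree outside the three-letter window $g_{i-1}g_ig_{i+1}$, it suffices to handle that window. In Case 1, where $g_{i-1}=g_{i+1}$, the uplifting deletes $g_ig_{i+1}$, so the required identity is $g_{i-1}g_ig_{i-1}\approx g_{i-1}$. Since $g_i$ is a river adjacent to $g_{i-1}$ in a landscape, $g_i\in\{g_{i-1}^l,g_{i-1}^r\}$, and this is immediate from Lemma~\ref{prodelem}.(i).

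In Case 2, where $g_{i-1}\neq g_{i+1}$, the uplifting replaces $g_i$ by the single letter $g_i'=(g_{i+1},g_i,g_{i-1})\in\Gx$, whose entries satisfy $(g_i')^l=g_{i+1}$, $(g_i')^c=g_i$, $(g_i')^r=g_{i-1}$, and whose height $\up(g_i')=\up(g_i)+2\geq 2$. Applying Lemma~\ref{prodelem}.(i) to $g_i'$ (using both $g^cg\approx g$ and $gg^c\approx g$) gives
$$g_ig_i'g_i\;=\;(g_i')^c\,g_i'\,(g_i')^c\;\approx\;g_i'\,.$$
Since $\up(g_i')\geq 2$, the third relation of $\rho_s$ applied to $g_i'$ reads
$$g_{i-1}g_ig_i'g_ig_{i+1}\;=\;(g_i')^r(g_i')^cg_i'(g_i')^c(g_i')^l\;\approx\;(g_i')^r(g_i')^c(g_i')^l\;=\;g_{i-1}g_ig_{i+1}\,.$$
Chaining the two displays yields $g_{i-1}g_i'g_{i+1}\approx g_{i-1}g_ig_i'g_ig_{i+1}\approx g_{i-1}g_ig_{i+1}$, as needed.

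The only real point where care is required is Case 2: one has to recognize that the new letter $g_i'$ is exactly a legal element of $\Gx$ (this is essentially the verification, already performed when upliftings were defined, that the constraints $g^l\neq g^r$ and $g^c\in\{(g^l)^l,(g^l)^r\}\cap\{(g^r)^l,(g^r)^r\}$ are satisfied), and that the third defining relation of $\rho_s$ then applies to $g_i'$ in precisely the right form. Once this identification is made, the rest is just bookkeeping with Lemma~\ref{prodelem}.(i) and the congruence property of $\rho$.
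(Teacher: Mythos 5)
Your proposal is correct and follows essentially the same route as the paper: show that a single uplifting $u\to v$ preserves the $\rho$-class by reducing to the three-letter window around the river, then iterate along $u\xr{*}\be_2(u)$. The only cosmetic difference is that in Case 2 you re-derive the identity $g^rgg^l\ap g^rg^cg^l$ from the raw $\rho_s$ relation, whereas the paper cites it directly from Lemma~\ref{prodelem}.$(i)$, which already contains exactly that computation.
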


\begin{proof}
Let $g_i$ be a river of $u$ and let $v$ be obtained from $u$ by uplifting $g_i$. By Lemma \ref{prodelem}.$(i)$,
$$g_{i-1}g_ig_{i+1}\ap\left\{\begin{array}{ll}
g_{i-1} & \mbox{ if } g_{i-1}=g_{i+1} \\ [.2cm]
g_{i-1}g_i'g_{i+1} \quad & \mbox{ if } g_{i-1}\neq g_{i+1}\,,
\end{array}\right.$$
where $g_i'=(g_{i+1},g_i,g_{i-1})\in \Gx$. Hence $u\ap v$. Now, applying several times the previous conclusion, we obtain $\be_2(u)\ap u$.
\end{proof}

For each $u\in \Gxp$, let $\be(u)=\be_2(\be_1(u))$. Then $\be(u)\in \mxm$ and, by the two previous lemmas, $\be(u)\ap u$. We register this observation in the next corollary for future reference. 

\begin{cor}\label{m}
$\be(u)\ap u$ for all $u\in \Gxp$.
\end{cor}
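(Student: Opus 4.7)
The plan is to obtain the statement by simply chaining Lemmas \ref{m1} and \ref{m2}, since the paragraph immediately preceding the corollary already outlines the argument; what remains is to justify the two preconditions needed to make the chain valid. First, for arbitrary $u\in\Gxp$ I would apply Lemma \ref{m1} to get $\be_1(u)\ap u$. To then apply Lemma \ref{m2} to $\be_1(u)$, I must check that $\be_1(u)$ lies in $\lx$. By construction, $\be_1(u)=\be_1(g_0)*\cdots *\be_1(g_n)$, where each $\be_1(g_i)$ is a mountain (an uphill followed by a downhill, both starting and ending at the symbol $1$), and the $*$-operation glues consecutive mountains by collapsing the double $1$ at each junction into a single river. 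Hence $\be_1(u)$ is a mountain range, so in particular $\be_1(u)\in\mrx\subseteq\lx$, as noted just after the definition of $\be_1(u)$.

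With that verified, Lemma \ref{m2} yields $\be_2(\be_1(u))\ap \be_1(u)$, and transitivity of $\ap$ gives $\be(u)=\be_2(\be_1(u))\ap u$, which is the claimed congruence. The remaining assertion $\be(u)\in\mxm$ (already mentioned in the preceding paragraph but worth recording explicitly) comes from the observation made just before Lemma \ref{m2}: since $\be_1(u)\in\mrx$ and since uplifting a river inside a mountain range produces another mountain range, we have $\be_2(\be_1(u))\in\mrx$; being river-free by definition of $\be_2$, it therefore belongs to $\mxm$. There is no genuine obstacle to overcome here, the corollary is purely a packaging step, and the only thing to be careful about is not to forget the mountain-range membership of $\be_1(u)$ before invoking Lemma \ref{m2}.
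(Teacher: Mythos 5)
Your proposal is correct and follows exactly the paper's (one-line) argument: chain Lemma \ref{m1} with Lemma \ref{m2}, noting that $\be_1(u)\in\mrx\subseteq\lx$ so the second lemma applies. The extra verifications you record (mountain-range membership of $\be_1(u)$ and of $\be(u)$) are accurate and consistent with the observations the paper makes just before stating the corollary.
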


The next result is a consequence of Lemma \ref{prodelem}.$(i)$.

\begin{lem}\label{updownhill}
If $u=g_0\cdots g_n\in \lom$, then $u\cev{u}\ap g_0$.
\end{lem}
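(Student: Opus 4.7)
The plan is to prove this by induction on $n$, the length of the downhill minus one.

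For the base case $n=0$, we have $u=g_0$, so $u\cev{u}=g_0g_0$, which is $\rho_e$-equivalent to $g_0$ since $(g^2,g)\in\rho_e$.

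For the inductive step, assume the statement holds for every downhill with fewer than $n+1$ letters, and let $u=g_0g_1\cdots g_n\in\lom$. Observe that the suffix $u_1=g_1\cdots g_n$ is again a downhill (of length $n$), since the condition $g_i\in\{g_{i-1}^l,g_{i-1}^r\}$ for $1\leq i\leq n$ is inherited. Noting that $\cev{u}=\cev{u_1}\,g_0$, we compute
\[
u\cev{u}=g_0\,(u_1\cev{u_1})\,g_0\ap g_0g_1g_0
\]
by the induction hypothesis applied to $u_1$. Because $u$ is a downhill, we have $g_1\in\{g_0^l,g_0^r\}$, and Lemma \ref{prodelem}.$(i)$ gives $g_0g_1g_0\ap g_0$ in either case. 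Combining the two equivalences yields $u\cev{u}\ap g_0$, completing the induction.

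There is no genuine obstacle here: the argument is purely a two-line induction that exploits the downhill condition to apply the identities $gg^lg\ap g\ap gg^rg$ already established in Lemma \ref{prodelem}.$(i)$. The only mild point to verify carefully is that the suffix of a downhill is itself a downhill, which is immediate from the definition.
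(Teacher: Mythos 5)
Your proof is correct and is essentially the paper's argument: the paper collapses the palindrome $g_0\cdots g_{n-1}g_ng_{n-1}\cdots g_0$ from the inside out by repeated use of $gg^lg\ap g\ap gg^rg$ from Lemma \ref{prodelem}.$(i)$, and your induction on $n$ is just a formalization of that same iteration. The only cosmetic point is that your base case $n=0$ concerns a single letter, which is not formally in $\lom$ (hills require length at least $2$ by definition); starting the induction at $n=1$, where $u\cev{u}=g_0g_1g_1g_0\ap g_0g_1g_0\ap g_0$, removes even that quibble.
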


\begin{proof}
Note that $g_i\in\{g_{i-1}^l,g_{i-1}^r\}$, for $i=1,\cdots, n$, since $u\in\lom$. Now, applying Lemma \ref{prodelem}.$(i)$ several times, we have
$$\begin{array}{ll}
u\cev{u} \ap u*\cev{u}\!\!\! & = g_0\cdots g_{n-2}g_{n-1}g_ng_{n-1}g_{n-2}\cdots g_0 \\ [.2cm]
& \ap g_0\cdots g_{n-2}g_{n-1}g_{n-2}\cdots g_0 \\ [.2cm]
& \ap g_0\cdots g_{n-3}g_{n-2}g_{n-3}\cdots g_0 \\
&\hspace*{1cm} \vdots \\
& \ap g_0g_1g_0 \ap g_0\,.
\end{array}$$
So $u\cev{u}\ap g_0$.
\end{proof}

We can now prove that $\ftxm$ is a regular monoid.

\begin{prop}\label{reg}
The monoid $\ftxm$ is an idempotent generated regular monoid and $[\cev{u}]$ is an inverse of $[u]$ for all $u\in \ltp$.
\end{prop}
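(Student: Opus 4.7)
The fact that $\ftxm$ is an idempotent generated monoid has already been built into the setup: $\rho_e$ forces each $[g]$ to satisfy $[g]^2=[g]$ and $[1]$ to be the identity, so only regularity and the claim that $[\cev u]\in V([u])$ for $u\in\ltp$ require proof. Corollary \ref{m} reduces regularity to the second claim, since every non-identity element $[v]$ is of the form $[\be(v)]$ with $\be(v)\in\mx\subseteq\ltp$, while the identity $[1]$ is self-inverse. The plan is therefore to establish the two identities $u\cev u u\ap u$ and $\cev u u\cev u\ap\cev u$ for every $u\in\ltp$; the second follows from the first by symmetry, since reversing an uphill followed by a downhill yields another uphill followed by a downhill, so $\cev u\in\ltp$ and one simply applies the first identity to $\cev u$.

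Fix $u=g_0\cdots g_n\in\ltp$ with peak $g_k$ ($0<k<n$) and write $u=u_l u_r'$ in $\Gxp$, where $u_l=g_0\cdots g_k$ is the maximal initial uphill and $u_r'=g_{k+1}\cdots g_n$ is the strict descent after the peak, so that $\cev u=\cev{u_r'}\,\cev{u_l}$. The first reduction is $u\cev u\ap u_l*\cev{u_l}$. To see it, factor $u\cev u=u_l(u_r'\cev{u_r'})\cev{u_l}$; since $u_r'\in\lom$ (or is a single letter if $k=n-1$), Lemma \ref{updownhill} together with $\rho_e$ gives $u_r'\cev{u_r'}\ap g_{k+1}$. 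Substituting yields $u\cev u\ap u_l\, g_{k+1}\,\cev{u_l}$, whose central subword is $g_k g_{k+1} g_k$; because $g_{k+1}\in\{g_k^l,g_k^r\}$, Lemma \ref{prodelem}(i) collapses this to $g_k$, and the result is exactly $u_l*\cev{u_l}$.

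The second reduction shows $(u_l*\cev{u_l})\,u\ap u$. Concatenating and using $\rho_e$ to delete the doubled $g_0$ at the junction gives the landscape $(u_l*\cev{u_l})*u$, whose central portion (between the prefix $g_0\cdots g_{k-1}$ and the suffix $g_{k+1}\cdots g_n$) is precisely the valley $\cev{u_l}*u_l$. Applying Lemma \ref{updownhill} to the downhill $\cev{u_l}\in\lom$ yields $\cev{u_l}u_l\ap g_k$, hence $\cev{u_l}*u_l\ap g_k$; after this replacement, the remaining letters reassemble into exactly the word $u$. Combining the two steps gives $u\cev u u\ap u$, and the symmetric argument gives $\cev u u\cev u\ap\cev u$, so $[\cev u]\in V([u])$; together with Corollary \ref{m} this proves $\ftxm$ is regular.

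The main obstacle is the positional bookkeeping in the second reduction: one has to confirm that the central subword of $(u_l*\cev{u_l})*u$ is genuinely the valley $\cev{u_l}*u_l$ (so Lemma \ref{updownhill} applies to the correct downhill) and that after its collapse to $g_k$ the remaining pieces of $u_l$ and $u_r'$ glue back into $u$ without any spurious repetitions. The algebraic key that makes this possible is the simple identity $g g_1 g\ap g$ for $g_1\in\{g^l,g^r\}$ from Lemma \ref{prodelem}(i), which is what allows the peak $g_k$ to absorb the isolated $g_{k+1}$ produced in the first step.
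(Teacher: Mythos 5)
Your proof is correct and follows essentially the same route as the paper: reduce regularity to the claim about $\ltp$ via the canonical forms of Corollary \ref{m}, then verify $u\cev{u}u\ap u$ by collapsing the downhill-times-reverse valleys with Lemma \ref{updownhill}, with $\cev{u}u\cev{u}\ap\cev{u}$ following by symmetry since $\cev{u}\in\ltp$. The only (cosmetic) difference is that you split $u$ at the peak as $u_l u_r'$ with the peak excluded from the right factor, which costs you one extra application of $g g_1 g\ap g$ to absorb the leftover $g_{k+1}$, whereas the paper keeps the peak in both halves and avoids that step.
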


\begin{proof}
We have already seen that $\ftxm$ is an idempotent generated mo\-noid. Since $v\ap \be(v)$ and $\be(v)\in \mxm\subseteq \ltp\cup\{1\}$ for all $v\in \Gxp$, it is enough to prove the second part of this proposition to conclude also that $\ftxm$ is regular. Let $u=g_0\cdots g_n\in \ltp$ with peak $g_k$ for some $0<k<n$. Then $u_1=g_0\cdots g_k\in\lop$, while $u_2=g_k\cdots g_n\in\lom$. By Lemma \ref{updownhill}, $u_2\cev{u_2}\ap g_k\ap\cev{u_1}u_1$ (note that $\cev{u_1}=g_k\cdots g_0\in\lom$). Hence
$$u\cev{u}u=u_1*u_2\,\cev{u_2}*\cev{u_1}\,u_1*u_2\ap u_1*g_k*g_k*u_2=u_1*u_2=u\,.$$
Since $\cev{v}=u$ for $v=\cev{u}$, we have similarly that $\cev{u}u\cev{u}\ap \cev{u}$. Consequently, $[\cev{u}]$ is an inverse of $[u]$.
\end{proof}

Now that we have shown that $\ftxm$ is a regular mo\-noid, let us prove that each $\rho$-class $\varrho$ has a canonical element. The next couple of results contain technical details that lead to the conclusion that $\be(u)=\be(v)$ if and only if $u\ap v$. Thus $\varrho$ contains exactly one mountain, namely $\be(u)$ for some (any) $u\in\varrho$. We will call $\be(u)$ the \emph{canonical form} of $u\in \Gxp$.

\begin{lem}\label{valelem}
Let $g\in \Gx$ and let $u=g_0\cdots g_n\in \lx$. Then: 
\begin{itemize}
\item[$(i)$] $\la_r(g)*\la_l(g)\xrightarrow{*}g$ and $\be_1(g^2)\xr{*} \be_1(g)$.
\item[$(ii)$] If $\up(g)\geq 2$ and $h\in \{g^r,g^l\}$, then $\la_r(h)*\la_l(g)\xrightarrow{*}hg$ and $\la_r(g)*\la_l(h)\xr{*}gh$.
\item[$(iii)$] $\be_1(u)\xr{*} \la_l(g_0)*u*\la_r(g_n)$.
\item[$(iv)$] $\be(u)=\be_2(u)$ if $u\in \mrx$.
\item[$(v)$] $\be(u)=u$ if $u\in \mxm$.
\end{itemize}
\end{lem}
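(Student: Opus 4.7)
My plan is to prove the five parts in order, with the crux being an induction on $\up(g)$ for both $(i)$ and $(ii)$. Once those are established, part $(iii)$ follows by applying them to consecutive mountain endpoints in $\be_1(u)$, and $(iv)$ and $(v)$ fall out from $(iii)$ together with the confluence of the uplifting system.

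For $(i)$, I induct on $\up(g)$. The base cases $\up(g)\in\{0,1\}$ reduce by inspection: $\la_r(1)*\la_l(1)=1$, and for $g\in X$ the word $\la_r(g)*\la_l(g)=g1g$ uplifts (its river $1$ having equal neighbors $g,g$) directly to $g$. For the inductive step at $\up(g)\geq 2$, the recursive identities $\la_r(g)=gg^r\la_r(g^c)$ and $\la_l(g)=\la_l(g^c)g^lg$ give $\la_r(g)*\la_l(g)=gg^r\bigl[\la_r(g^c)*\la_l(g^c)\bigr]g^lg$; the induction hypothesis collapses the bracketed valley to $g^c$, after which the word $gg^rg^cg^lg$ contains $g^c$ as a river whose uplift produces the triple $(g^l,g^c,g^r)=g$ itself. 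In the resulting word $gg^rgg^lg$, both $g^r$ and $g^l$ are rivers sandwiched between equal copies of $g$, and successive uplifts leave just $g$. The second claim of $(i)$ follows at once: expanding $\be_1(g^2)=\la_l(g)*\la_r(g)*\la_l(g)*\la_r(g)$ and applying the first claim to the middle factor gives $\la_l(g)*g*\la_r(g)=\be_1(g)$.

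For $(ii)$, I record first a structural observation that drives the induction: if $h\in\{g^l,g^r\}$ and $\up(g)\geq 3$, then $h^c\in\{(g^c)^l,(g^c)^r\}$. Indeed, the defining condition of $\Gx$ forces $g^c$ to be a side entry of $h$, and the same defining condition applied to $h\in\Gx$ then places $h^c$ among the two side entries of that side entry, which are exactly $(g^c)^l$ and $(g^c)^r$. Granted this, I induct on $\up(g)$. The base case $\up(g)=2$ is direct: $\la_r(h)*\la_l(g)=h1g^lg$ uplifts the river $1$ (unequal neighbors $h=g^r$ and $g^l$) to $(g^l,1,g^r)=g$, giving $hgg^lg$, and then the river $g^l$ between two $g$'s collapses to leave $hg$. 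In the inductive step, the decompositions used in $(i)$ yield $\la_r(h)*\la_l(g)=hh^r\bigl[\la_r(h^c)*\la_l(g^c)\bigr]g^lg$; by the observation and the induction hypothesis the bracketed valley reduces to $h^cg^c$, leaving $hh^rh^cg^cg^lg$. A short case split on whether $g^c=h^l$ or $g^c=h^r$ identifies the next uplifts concretely: either the uplift of $h^c$ produces $h$ itself, or its right neighbor $g^c$ equals $h^r$ and the two cancel directly. In both cases, two or three further cancellations yield $hg$. The symmetric identity $\la_r(g)*\la_l(h)\xr{*}gh$ is handled by the mirror argument.

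For $(iii)$, I expand $\be_1(u)=\la_l(g_0)*\la_r(g_0)*\la_l(g_1)*\la_r(g_1)*\cdots*\la_l(g_n)*\la_r(g_n)$. In a landscape consecutive letters are distinct and related as $l/r$-entries, so each inner factor $\la_r(g_{i-1})*\la_l(g_i)$ reduces to $g_{i-1}g_i$ by $(ii)$ (when the taller of the two has height at least $2$) or trivially (when one letter is $1$ and the other lies in $X$). Concatenating these reductions across $i=1,\ldots,n$ recovers $\la_l(g_0)*u*\la_r(g_n)$. Part $(iv)$ is then immediate: for $u\in\mrx$ we have $g_0=g_n=1$, hence $\la_l(g_0)=\la_r(g_n)=1$, so $(iii)$ gives $\be_1(u)\xr{*}u$, and by confluence of the uplifting system $\be_2(\be_1(u))=\be_2(u)$, i.e., $\be(u)=\be_2(u)$. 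Part $(v)$ is $(iv)$ combined with the fact that a mountain has no rivers, so $\be_2(u)=u$. The hardest step I anticipate is the post-induction bookkeeping in $(ii)$: identifying exactly which of $h^l$ or $h^r$ equals $g^c$, and verifying that the two or three subsequent uplifts cancel cleanly enough to land on $hg$ rather than stalling at a larger river-free landscape.
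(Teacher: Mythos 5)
Your proof is correct and follows essentially the same route as the paper's: induction on $\up(g)$ for $(i)$ and $(ii)$ using the recursive decompositions $\la_r(g)=gg^r\la_r(g^c)$ and $\la_l(g)=\la_l(g^c)g^lg$, then $(iii)$--$(v)$ as formal consequences via confluence of the uplifting system. The only deviation is in the inductive step of $(ii)$, where the paper peels one layer off $\la_l(g)$ only and invokes the (symmetrically stated) hypothesis on $\la_r(h)*\la_l(g^c)$ using $g^c\in\{h^l,h^r\}$, whereas you peel a layer off both hills and invoke it on $\la_r(h^c)*\la_l(g^c)$ via the observation $h^c\in\{(g^c)^l,(g^c)^r\}$ --- a correct variant that merely costs you the extra case split you describe.
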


\begin{proof}
$(i)$. We prove the first part of $(i)$ by induction on $\up(g)$. Clearly $\la_r(1)*\la_l(1)=1$ and $\la_r(g)*\la_l(g)=g1g\xrightarrow{1} g$ for $g\in \G_1(X)=X$. Let $g\in \Gix$ for $i\geq 2$ and assume that $\la_r(h)*\la_l(h)\xrightarrow{*}h$ for all $h\in \G_j(X)$ with $j<i$. Then
$$\la_r(g)*\la_l(g)=gg^r(\la_r(g^c)*\la_l(g^c))g^lg\xr{*}gg^rg^cg^lg\xr{g^c} gg^rgg^lg\xr{*} g$$
as desired. Now, the second part of $(i)$ follows easily from the first:
$$\be_1(g^2)=\la_l(g)*\la_r(g)*\la_l(g)*\la_r(g)\xr{*}\la_l(g)*g*\la_r(g)=\be_1(g)\,.$$

$(ii)$. We prove $(ii)$ also by induction on $\up(g)$. If $\up(g)=2$, then 
$$\la_r(h)*\la_l(g)=h1g^lg$$
and either $h=g^l$ or $h=g^r$. If $h=g^l$, then $h1g^lg\xr{1}hg$. If $h=g^r$, then $h1g^lg\xr{1} hgg^lg\xr{g^l}hg$. Thus, we get $\la_r(h)*\la_l(g)\xrightarrow{*}hg$ in both cases. Similarly, we show that $\la_r(g)*\la_l(h)\xrightarrow{*}gh$ if $\up(g)=2$.

Let $g\in \Gix$ for $i>2$ and let $h\in\{g^l,g^r\}$. Assume that $\la_r(h_1)*\la_l(h_2)\xrightarrow{*}h_1h_2$ and $\la_r(h_2)*\la_l(h_1)\xrightarrow{*}h_2h_1$ for all $h_2\in \G_j(X)$ such that $2\leq j<i$ and $h_1\in\{h_2^l,h_2^r\}$. Note that $g^c\in\{h^l,h^r\}$. Hence, 
$$\la_r(h)*\la_l(g)=\la_r(h)*\la_l(g^c)g^lg\xr{*}hg^cg^lg\xr{*}hg\,,$$
where the first $\xr{*}$ follows from the induction hypothesis, while the second $\xr{*}$ follows from the same arguments used in the case $\up(g)=2$. We can show that $\la_r(g)*\la_l(h)\xrightarrow{*}gh$ similarly.

$(iii)$. We show that $\be_1(u_i)\xr{*} \la_l(g_0)*u_i*\la_r(g_i)$, for $u_i=g_0\cdots g_i$, by induction on $i$. Clearly $\be_1(u_0)=\la_l(g_0)*u_0*\la_r(g_0)$. Assume that $\be_1(u_{i-1})\xr{*} \la_l(g_0)*u_{i-1}*\la_r(g_{i-1})$. Then, by the induction hypothesis and by $(ii)$,
$$\begin{array}{ll}
\be_1(u_i)& =\be_1(u_{i-1})*\be_1(g_i)\\ [.2cm]
& \xr{*} \la_l(g_0)*u_{i-1}*\la_r(g_{i-1})*\la_l(g_i)*\la_r(g_i) \\ [.2cm]
& \xr{*} \la_l(g_0)*u_{i-1}*(g_{i-1}g_i)*\la_r(g_i)=\la_l(g_0)*u_{i}*\la_r(g_i)\,.
\end{array}$$
We have proved by induction that $\be_1(u)\xr{*} \la_l(g_0)*u*\la_r(g_n)$.

$(iv)$ follows easily from $(iii)$ since $g_0=1=g_n$ if $u\in\mrx$:
$$\be(u)=\be_2(\be_1(u))=\be_2(\la_l(g_0)*u*\la_r(g_n))=\be_2(u)\,;$$
and $(v)$ is obvious from $(iv)$.
\end{proof}

\begin{prop}\label{mrhoinv}
Let $g\in \Gx$ and $u,v\in \Gxp$.
\begin{itemize}
\item[$(i)$] $\be(g)=\be(g1)=\be(1g)=\be(g^2)$.
\item[$(ii)$] If $\up(g)\geq 2$, then $\be(g)=\be(g^cg^lg)=\be(gg^rg^c)$ and $\be(g^rg^cgg^cg^l)=\be(g^rg^cg^l)$.
\item[$(iii)$] $\be(u)=\be(v)$ if and only if $u\ap v$.
\end{itemize}
\end{prop}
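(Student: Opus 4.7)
For part $(i)$, I would verify directly from the definitions. Since $\be_1(1)=1$ and $\be_1$ is built letter-by-letter using $*$, we have $\be_1(g1)=\be_1(g)*1=\be_1(g)$ and $\be_1(1g)=\be_1(g)$, hence $\be(g1)=\be(1g)=\be(g)$. The identity $\be(g^2)=\be(g)$ follows from Lemma~\ref{valelem}$(i)$, which gives $\be_1(g^2)\xr{*}\be_1(g)$; since $\be_2$ is constant on $\xr{*}$-classes of landscapes, the two mountains coincide.

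For part $(ii)$, the first two identities use Lemma~\ref{valelem}$(iii)$: the word $g^cg^lg$ is a landscape, because $g^c$ is an entry of $g^l$ and $g^l$ is an entry of $g$ by definition of $\Gix$; so $\be_1(g^cg^lg)\xr{*}\la_l(g^c)*g^cg^lg*\la_r(g)$, which collapses to $\la_l(g)*\la_r(g)=\be_1(g)$ using the identity $\la_l(g)=\la_l(g^c)\,g^lg$ read off the explicit formula for $\la_l$; hence $\be(g^cg^lg)=\be(g)$, and $gg^rg^c$ is dual. For the third identity, my plan is to first establish the auxiliary reductions $\be_1(g^c)*\be_1(g)\xr{*}\be_1(g)$ and $\be_1(g)*\be_1(g^c)\xr{*}\be_1(g)$: writing $\be_1(g)=\la_l(g^c)\,g^lg\,g^r\la_r(g^c)$, the product $\be_1(g^c)*\be_1(g)$ contains the valley $\la_r(g^c)*\la_l(g^c)$ in the middle, which Lemma~\ref{valelem}$(i)$ reduces to $g^c$; that single $g^c$ then merges with the adjacent $g^c$ at the end of $\la_l(g^c)$, leaving exactly $\be_1(g)$. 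Applying these inside the longer word, $\be_1(g^rg^cgg^cg^l)\xr{*}\be_1(g^r)*\be_1(g)*\be_1(g^l)=\be_1(g^rgg^l)$. Finally, both $g^rg^cg^l$ and $g^rgg^l$ are landscapes whose $\be_1$-images reduce to the common mountain $\la_l(g^r)\,g\,\la_r(g^l)$: Lemma~\ref{valelem}$(iii)$ gives $\be_1(g^rgg^l)\xr{*}\la_l(g^r)*g^rgg^l*\la_r(g^l)=\la_l(g^r)\,g\,\la_r(g^l)$, and $\be_1(g^rg^cg^l)\xr{*}\la_l(g^r)\,g^c\,\la_r(g^l)$ where $g^c$ is a river whose uplifting produces the ridge $(g^l,g^c,g^r)=g$, yielding the same mountain.

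For part $(iii)$, the ``only if'' direction is immediate from Corollary~\ref{m}: $\be(u)=\be(v)$ gives $u\ap\be(u)=\be(v)\ap v$. For the ``if'' direction, I would show that the relation $\equiv$ on $\Gxp$ defined by $u\equiv v\Leftrightarrow\be(u)=\be(v)$ is a congruence containing $\rho_e\cup\rho_s$; since $\ap$ is the smallest such congruence, this forces $\ap\subseteq{\equiv}$. Containment is exactly parts $(i)$ and $(ii)$. For the congruence property, given $\be(u)=\be(v)$ and $w\in\Gxp$, the reduction $\be_1(u)\xr{*}\be(u)$ is performed inside $\be_1(wu)=\be_1(w)*\be_1(u)$ to give $\be_1(wu)\xr{*}\be_1(w)*\be(u)$; symmetrically $\be_1(wv)\xr{*}\be_1(w)*\be(v)=\be_1(w)*\be(u)$. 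By confluence of the uplifting system on landscapes, $\be_1(wu)$ and $\be_1(wv)$ must share the same normal form, so $\be(wu)=\be(wv)$; the case $\be(uw)=\be(vw)$ is analogous.

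The principal obstacle throughout is the compatibility lemma: if $u\xr{*}u'$ in $\lx$, then $w_1*u*w_2\xr{*}w_1*u'*w_2$ whenever the $*$-concatenations are valid. This holds because the river property and the uplifting rule are local conditions on three consecutive letters, rivers are never endpoints of the original landscape, and the junction letter where the trailing $1$ of one factor merges with the leading $1$ of the next does not interfere with upliftings strictly interior to the relevant factor. This compatibility is needed both in $(ii)$'s third identity (to perform the valley reduction of Lemma~\ref{valelem}$(i)$ inside the larger mountain range $\be_1(g^c)*\be_1(g)$) and in the congruence step of $(iii)$.
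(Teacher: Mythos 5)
Your proof is correct and follows essentially the same route as the paper: $(i)$ and the first two identities of $(ii)$ via Lemma~\ref{valelem}, the third identity of $(ii)$ by reducing both sides to the common mountain $\la_l(g^r)\,g\,\la_r(g^l)$, and $(iii)$ by showing that the kernel of $\be$ is a congruence containing $\rho_e\cup\rho_s$ and using Corollary~\ref{m} for the reverse inclusion. The only cosmetic difference is in $(ii)$, where you absorb $\be_1(g^c)$ into the adjacent $\be_1(g)$ before reducing while the paper first reduces $\be_1(g^rg^c)$ and $\be_1(g^cg^l)$ separately; both computations land on the same normal form, and your explicit compatibility remark about performing upliftings inside $*$-concatenations makes precise a step the paper uses tacitly.
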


\begin{proof}
$(i)$. $\be(g)=\be(g1)=\be(1g)$ are obvious equalities. Further, by Lemma \ref{valelem}.$(i)$, $\be(g^2)=\be_2(\be_1(g^2))=\be_2(\be_1(g))=\be(g)$ and $(i)$ is proved.

$(ii)$. By Lemma \ref{valelem}.$(iii)$, $\be_1(g^cg^lg) \xr{*} \la_l(g^c)*(g^cg^lg)*\la_r(g)=\be_1(g)$. Thus $\be(g^cg^lg)=\be(g)$. We have also $\be(gg^rg^c)=\be(g)$ by duality. Once more by Lemma \ref{valelem}.$(iii)$, we have 
$$\be_1(g^rg^c)\xr{*}\la_l(g^r)*(g^rg^c)*\la_r(g^c)=\la_l(g^r)\la_r(g^c)$$ 
and $\be_1(g^cg^l)\xr{*}\la_l(g^c)\la_r(g^l)$. Hence,
$$\begin{array}{ll}
\be_1(g^rg^cgg^cg^l)& \xr{*} \la_l(g^r)\la_r(g^c)*\be_1(g)*\la_l(g^c)\la_r(g^l)\\ [.2cm]
& = \la_l(g^r)(\la_r(g^c)*\la_l(g^c))g^lgg^r(\la_r(g^c)*\la_l(g^c))\la_r(g^l)  \\ [.2cm]
& \xr{*} \la_l(g^r)g^cg^lgg^rg^c\la_r(g^l)\\ [.2cm]
& \xr{*}\la_l(g^r)gg^lgg^rg\la_r(g^l)\xr{*} \la_l(g^r)g\la_r(g^l)
\end{array}$$
and 
$$\be_1(g^rg^cg^l)\xr{*} \la_l(g^r)*(g^rg^cg^l)*\la_r(g^l) = \la_l(g^r)g^c\la_r(g^l)\xr{g^c} \la_l(g^r)g\la_r(g^l)\,.$$ 
Consequently, $\be(g^rg^cgg^cg^l)=\la_l(g^r)g\la_r(g^l)=\be(g^rg^cg^l)$.

$(iii)$. Let $\rho'=\{(u,v)\in \Gxp\times \Gxp\,|\; \be(u)=\be(v)\}$. Note that $\rho'$ is a congruence on $\Gxp$ since $\be(u_1u_2)=\be(\be(u_1)u_2)=\be(u_1\be(u_2))$. Hence, we just need to prove that $\rho=\rho'$. But $\rho\subseteq\rho'$ since $\rho_e\subseteq\rho'$ by $(i)$ and $\rho_s\subseteq\rho'$ by $(ii)$. Assume now that $(u,v)\in\rho'$. Since $u\ap \be(u)$ and $\be(v)\ap v$ by Corollary \ref{m}, we conclude that $(u,v)\in\rho$ and $\rho'\subseteq\rho$.
\end{proof}

Although $\Gx$ and $\rho_e\cup\rho_s$ are always infinite sets (except for $|X|=1$), Proposition \ref{mrhoinv}.$(iii)$ gives us a solution for the word problem for $\ftxm$: to check if $u\ap v$, we just need to compute both $\be(u)$ and $\be(v)$, and check if we get the same mountain. Thus:

\begin{cor}
The word problem for $\ftxm$ is decidable.
\end{cor}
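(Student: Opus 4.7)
The plan is to package together the machinery already developed in the section and observe that every step is effective. By Proposition~\ref{mrhoinv}.$(iii)$ we have $u\ap v$ if and only if $\be(u)=\be(v)$, so it suffices to exhibit an algorithm that, on input $u\in\Gxp$, produces the mountain $\be(u)=\be_2(\be_1(u))$ in finitely many steps. Then testing $u\ap v$ reduces to computing $\be(u)$ and $\be(v)$ and comparing the two finite sequences of elements of $\Gx$ letter by letter.

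First I would observe that the map $\be_1$ is effectively computable: given $g\in\Gx\setminus\{1\}$, the height $\up(g)$ is a finite integer which can be read off from the nested-triple structure of $g$, and the explicit formula defining $\be_1(g)$ lists finitely many entries $g^{c^i},g^{c^{i-1}l},g^{c^{i-1}r}$, each of which is obtained by repeatedly projecting onto a component. For a general $u=g_0\cdots g_n\in\Gxp$ one then computes $\be_1(u)=\be_1(g_0)*\cdots*\be_1(g_n)$ by concatenating and collapsing the doubled letters at each junction, again a finite procedure.

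Next I would argue that $\be_2$ is effectively computable on its domain $\lx$. Given a landscape $v$, one can scan its letters and test, for each internal position $i$, the equality $\up(g_{i-1})=\up(g_{i+1})=\up(g_i)+1$, thus locating the rivers. If $v$ has no river, then $v=\be_2(v)$. Otherwise one applies the uplifting rule to any river $g_i$, producing a new landscape $v_1$ with $v\to v_1$. As shown in the discussion preceding Lemma~\ref{m2}, the tuple $r(v_1)$ strictly decreases in the lexicographic order, so any sequence of upliftings terminates after a bounded number of steps; moreover, by the noetherian-locally-confluent argument, the resulting river-free landscape is independent of the choices made and equals $\be_2(v)$. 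Hence iterating ``find a river and uplift it'' is an algorithm that halts with output $\be_2(v)$.

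Combining the two procedures yields an algorithm that computes $\be(u)$ for every $u\in\Gxp$, and therefore decides the relation $\ap$. I do not anticipate any real obstacle: termination of the $\be_2$-phase is the only subtle point and is already secured by the noetherian/confluent remarks in the text, while all other steps are purely syntactic manipulations on finite nested triples.
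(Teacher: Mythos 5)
Your proposal is correct and follows exactly the paper's route: the corollary is derived from Proposition~\ref{mrhoinv}.$(iii)$ by computing $\be(u)$ and $\be(v)$ and comparing, with termination of the uplifting phase guaranteed by the noetherian, locally confluent system discussed before Lemma~\ref{m2}. You merely spell out the effectiveness of $\be_1$ and $\be_2$ in more detail than the paper, which states this in one sentence.
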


We introduce the operation $\odot$ on both $\mxm$ and $\mx$ as follows:
$$u_1\odot u_2= \be(u_1*u_2)=\be_2(u_1*u_2)\,.$$

\begin{prop}\label{model}
$(\mxm,\odot)$ and $(\mx,\odot)$ are models for $\ftxm$ and $\ftx$, respectively.
\end{prop}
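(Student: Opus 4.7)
The plan is to show that the map $\Phi:\mxm\to\ftxm$ defined by $\Phi(u)=[u]$ is a monoid isomorphism, and that it restricts to a semigroup isomorphism $\mx\to\ftx$. All the machinery is already in place: Proposition \ref{mrhoinv}.$(iii)$ together with Lemma \ref{valelem}.$(v)$ tells us precisely that each $\rho$-class contains exactly one mountain, so $\Phi$ is immediately forced to be bijective.

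For bijectivity, surjectivity follows from Corollary \ref{m}: given $[v]\in\ftxm$, the mountain $\be(v)\in\mxm$ satisfies $[\be(v)]=[v]$. For injectivity, suppose $u_1,u_2\in\mxm$ with $[u_1]=[u_2]$. By Proposition \ref{mrhoinv}.$(iii)$, $\be(u_1)=\be(u_2)$, and by Lemma \ref{valelem}.$(v)$ each $u_i$ is already equal to $\be(u_i)$ since it is a mountain, so $u_1=u_2$.

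For the homomorphism property, the key observation is that $u_1*u_2\ap u_1u_2$ in $\Gxp/\rho$ for all $u_1,u_2\in\mxm$: the only difference between the two words is a doubled letter (namely $11$) at the junction, which $\rho_e$ collapses via $(g^2,g)$ (and if either $u_i=1$ then the reductions $(1g,g),(g1,g)\in\rho_e$ do the job). Therefore,
\[
\Phi(u_1\odot u_2)=[\be(u_1*u_2)]=[u_1*u_2]=[u_1u_2]=[u_1][u_2]=\Phi(u_1)\Phi(u_2),
\]
where the second equality uses Corollary \ref{m}. Hence $(\mxm,\odot)$ is a monoid with identity $1$ (since $1*u=u*1=u$), isomorphic to $\ftxm$ via $\Phi$.

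Finally, the identity element of $\ftxm$ is the class $1\rho$, and under $\Phi$ it corresponds to the trivial mountain $1\in\mxm$. Since $\ftx=\ftxm\setminus\{1\rho\}$ is a subsemigroup (as $\Gx^{\oplus}$ is closed under concatenation) and $\mx=\mxm\setminus\{1\}$, the map $\Phi$ restricts to a bijection $\mx\to\ftx$. Closure of $\mx$ under $\odot$ is then automatic: for $u_1,u_2\in\mx$ we have $\Phi(u_1\odot u_2)=\Phi(u_1)\Phi(u_2)\in\ftx$, so $u_1\odot u_2\neq 1$. There is no real obstacle here; the entire proof is a direct reading of what has already been established about canonical forms, with the only minor point requiring attention being the verification that $*$ differs from ordinary concatenation only by an application of $\rho_e$.
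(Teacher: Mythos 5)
Your proposal is correct and is essentially the paper's own argument written out in full: the paper's proof simply cites Proposition \ref{mrhoinv}.$(iii)$ and Lemma \ref{valelem}.$(v)$, which together say each $\rho$-class contains exactly one mountain, and your map $\Phi(u)=[u]$ with the observation $u_1*u_2\ap u_1u_2$ is the intended reading. The extra details you supply (surjectivity via Corollary \ref{m}, the restriction to $\mx\to\ftx$, closure of $\mx$ under $\odot$) are all sound.
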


\begin{proof}
This result follows from Proposition \ref{mrhoinv}.$(iii)$ and Lemma \ref{valelem}.$(v)$.
\end{proof}

We already know that $\ftx$ is a regular semigroup, but we intend to show that $\ftx$ is weakly generated by the idempotents of $X$. To do so, we need first to deepen our knowledge about the structure of $\ftx$.

\section{The structure of $\ftx$}

We define the \emph{ground} $\ep(g)$ of a letter $g\in \Gx$ recursively as follows: $\ep(1)=\{1\}$ and $\ep(g)=\ep(g^l)\cup\{g\} \cup\ep(g^r)$ if $\up(g)\geq 1$. Thus $\up(g_1)<\up(g)$ for any $g_1\in\ep(g)\setminus\{g\}$. 

\begin{lem}\label{ground}
Let $g,h\in \Gx$. Then:
\begin{itemize}
\item[$(i)$] $h\in\ep(g)$ \iff\ $\ep(h)\subseteq\ep(g)$.
\item[$(ii)$] If $h\in\ep(g)\setminus\{g\}$, then there exists $u=g_0\cdots g_n\in \lop$ such that $n=\up(g)-\up(h)$, $g_i\in\ep(g)$ for all $0\leq i\leq n$, $g_0=h$ and $g_n=g$.
\end{itemize}
\end{lem}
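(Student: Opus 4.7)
The plan is to prove both parts by induction, leaning on the recursive definition $\ep(g)=\ep(g^l)\cup\{g\}\cup\ep(g^r)$ and on the remark made just before the lemma that $\up(g_1)<\up(g)$ for every $g_1\in\ep(g)\setminus\{g\}$; in particular, $g$ is the unique element of $\ep(g)$ of height $\up(g)$, and similarly $g^l$ and $g^r$ are the unique top-height elements of $\ep(g^l)$ and $\ep(g^r)$ respectively.

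For (i), the direction ``$\Leftarrow$'' is immediate from $h\in\ep(h)$. For ``$\Rightarrow$'' I would induct on $\up(g)$. The base case $\up(g)=0$ forces $g=1$, whence $h=1=g$ and $\ep(h)=\ep(g)$. For $\up(g)\geq 1$ and $h\in\ep(g)$, either $h=g$ (and there is nothing to do) or $h\in\ep(g^l)\cup\ep(g^r)$; applying the inductive hypothesis to $g^l$ or $g^r$ (legitimate since $\up(g^l),\up(g^r)<\up(g)$) yields $\ep(h)\subseteq\ep(g^l)\cup\ep(g^r)\subseteq\ep(g)$.

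For (ii) I would induct on $d=\up(g)-\up(h)$, which is strictly positive by the uniqueness observation. In the base $d=1$, the element $h$ lies in $\ep(g^l)\cup\ep(g^r)$ and has height $\up(g)-1=\up(g^l)=\up(g^r)$, so the uniqueness of top-height elements applied inside $\ep(g^l)$ or $\ep(g^r)$ forces $h\in\{g^l,g^r\}$; then $u=hg$ is an uphill of the required form. For $d\geq 2$, pick a side so that $h\in\ep(g^l)$ (the other case is symmetric); since $\up(h)<\up(g^l)$ one has $h\ne g^l$, so the inductive hypothesis applied to the pair $(g^l,h)$, whose difference is $d-1$, supplies an uphill $u_1=g_0\cdots g_{n-1}$ with $g_0=h$, $g_{n-1}=g^l$, $n-1=d-1$, and all $g_i\in\ep(g^l)$. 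Appending $g_n=g$ yields $u=g_0\cdots g_n\in\lop$, because $g_{n-1}=g^l\in\{g_n^l,g_n^r\}$; moreover $\ep(g^l)\subseteq\ep(g)$ together with $g\in\ep(g)$ places every letter of $u$ in $\ep(g)$.

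There is no genuine obstacle: the argument is essentially bookkeeping around the recursive structure of $\ep$. The only subtlety worth flagging is that both the base step of (ii) and the appending step in the inductive step rest on the fact that $g^l$ and $g^r$ are the unique elements of their respective grounds attaining the top height, which is exactly what pins down the endpoints of the uphill being constructed.
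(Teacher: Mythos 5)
Your proposal is correct and follows essentially the same route as the paper: part (i) by induction on $\up(g)$ using the recursion $\ep(g)=\ep(g^l)\cup\{g\}\cup\ep(g^r)$, and part (ii) by induction on $\up(g)-\up(h)$, reducing to $g^l$ or $g^r$ and appending $g$ to the resulting uphill. The only difference is that you spell out the height/uniqueness argument pinning $h$ to $\{g^l,g^r\}$ in the base case, which the paper leaves implicit.
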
 

\begin{proof}
$(i)$. We only need to prove that $\ep(h)\subseteq\ep(g)$ if $h\in\ep(g)$, and this is done by induction on $\up(g)$. This statement is obviously true for $\up(g)=0$, that is, for $g=1$. Assume that $\ep(h_1)\subseteq\ep(g_1)$ for all $g_1$ such that $\up(g_1)<\up(g)$ and all $h_1\in\ep(g_1)$. If $h\in\ep(g)$, then either $h=g$, or $h\in\ep(g^l)$, or $h\in\ep(g^r)$. In the latter two cases, we have either $\ep(h)\subseteq\ep(g^l)$ or $\ep(h)\subseteq\ep(g^r)$, respectively, by the induction hypothesis. Hence,
$$\ep(h)\subseteq \ep(g^r)\cup \{g\}\cup\ep(g^l)=\ep(g)\,,$$
as wanted.

$(ii)$. Let $h\in\ep(g)\setminus\{g\}$. Once more, we use induction to prove $(ii)$, but now on $n=\up(g)-\up(h)$. If $n=1$, then $h=g^r$ or $h=g^l$. Hence $u=hg$ is an uphill satisfying the conditions of $(ii)$. Assume now that $n\geq 2$. Then $h\in\ep(g^l)$ or $h\in\ep(g^r)$. Without loss of generality, we assume that $h\in\ep(g^l)$. Note that $\up(g^l)-\up(h)=n-1$. Using the induction hypothesis, there exists $g_0\cdots g_{n-1}\in\lop$ such that all $g_i\in\ep(g^l)$ for $0\leq i\leq n-1$, $g_0=h$ and $g_{n-1}=g^l$. Clearly $u=g_0\cdots g_{n-1}g$ is now an uphill satisfying the conditions stated in $(ii)$.
\end{proof}

We write $h\preceq g$ if $h\in\ep(g)$, or equivalently, if $\ep(h)\subseteq\ep(g)$. Note that $\preceq$ is a partial order on $\Gx$ since all letters form $\ep(g)$ have height less than $g$, except $g$ itself.

We extend the notion of ground to any landscape as follows: if $u=g_0\cdots g_n\in \lx$, then $\ep(u)=\cup_{i=0}^n\ep(g_i)$. Clearly, $\ep(u)$ is the union of the grounds of its ridges, and the highest letters of $\ep(u)$ are the peaks of $u$. In particular, if $u\in \mxm$, then $\ep(u)=\ep(\ka(u))$ and all letters from $\ep(u)$ have height less than $\up(\ka(u))$, except $\ka(u)$ of course. By definition, both $\ep(u)$ and $\ep(v)$ are contained in $\ep(u*v)$ if $u$ and $v$ are landscapes such that $\tau(u)=\si(v)$. If $u$ and $v$ are mountain ranges such that $u\to v$, then either $\ep(u)=\ep(v)$ or $\ep(v)$ has one more letter from $\Gx$ than $\ep(u)$, namely the new ridge formed in $v$ by uplifting a river of $u$. Thus $\ep(u)\subseteq\ep(\be(u))=\ep(\ka(\be(u)))$ for any $u\in \lx$.

Given $u\in \mrx$, let $\la_l(u)$ and $\la_r(u)$ be the maximal initial uphill and the maximal final downhill of $u$, respectively (or $\la_l(u)=1=\la_r(u)$ if $u=1$). Thus $\la_l(\be(g))=\la_l(g)$ and $\la_r(\be(g))=\la_r(g)$. By definition of uplifting of rivers, if $v$ is another mountain range, then $\la_l(u)$ is a prefix of $\la_l(\be(u))$; $\la_l(\be(u))$ is a prefix of $\la_l(\be(u*v))$; $\la_r(v)$ is a suffix of $\la_r(\be(v))$; and $\la_r(\be(v))$ is a suffix of $\la_r(\be(u*v))$. Note also that $u_1=\la_l(u_1)*\la_r(u_1)$ for any $u_1\in\mxm$.

We include some particular cases of the conclusions taken above in the following lemma for future reference.

\begin{lem}\label{odot}
Let $u,v,w\in \mxm$ such that $w=u\odot v$. 
\begin{itemize}
\item[$(i)$] $\la_l(u)$ is a prefix of $\la_l(w)$, while $\la_r(v)$ is a suffix of $\la_r(w)$.
\item[$(ii)$] $\up(w)\geq\max\{\up(u),\up(v)\}$, and $\up(w)=\up(u)$ {\rm [}$\up(w)=\up(v)${\rm ]} \iff\ $\ka(w)=\ka(u)$ {\rm [}$\ka(w)=\ka(v)${\rm ]}.
\item[$(iii)$] $\ep(u)\cup\ep(v)\subseteq\ep(w)$.
\end{itemize}
\end{lem}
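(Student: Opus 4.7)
The plan is to chain together the four observations collected in the paragraph that immediately precedes the lemma statement. Since $u,v\in\mxm$, Lemma \ref{valelem}.$(v)$ gives $\be(u)=u$ and $\be(v)=v$, so $\la_l(u)=\la_l(\be(u))$ and $\la_r(v)=\la_r(\be(v))$. Moreover, by the definition of $\odot$ and Lemma \ref{valelem}.$(iv)$, $w=u\odot v=\be_2(u*v)=\be(u*v)$, because $u*v\in\mrx$. With these identifications in hand, parts $(i)$ and $(iii)$ can be read off directly from the preceding discussion.

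For $(i)$, the paragraph before the lemma records that $\la_l(\be(u))$ is a prefix of $\la_l(\be(u*v))$ and that $\la_r(\be(v))$ is a suffix of $\la_r(\be(u*v))$; under the identifications above these translate into the two claims of $(i)$. For $(iii)$, the inclusions $\ep(u),\ep(v)\subseteq\ep(u*v)$ follow from the definition $\ep(g_0\cdots g_n)=\bigcup_i\ep(g_i)$, and $\ep(u*v)\subseteq\ep(\be(u*v))=\ep(w)$ is the last inclusion stated in the same preceding paragraph; concatenating these yields $\ep(u)\cup\ep(v)\subseteq\ep(w)$.

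For $(ii)$, I will combine $(iii)$ with the structural fact, also noted above, that every mountain $w$ satisfies $\ep(w)=\ep(\ka(w))$ and all letters of $\ep(\ka(w))$ have height at most $\up(\ka(w))=\up(w)$, with equality only at $\ka(w)$ itself. Part $(iii)$ puts $\ka(u),\ka(v)\in\ep(w)$, so $\up(u)=\up(\ka(u))\leq\up(w)$ and likewise $\up(v)\leq\up(w)$; and the equality case $\up(w)=\up(u)$ forces $\ka(u)=\ka(w)$, with the converse implication $\ka(w)=\ka(u)\Rightarrow\up(w)=\up(u)$ being immediate. The degenerate possibility $w=1$ is harmless, since by $(iii)$ it forces $u=v=1$ and the clauses mentioning $\ka$ become vacuous.

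There is no genuine obstacle to overcome here: the entire lemma amounts to bookkeeping on top of the remarks already assembled in the paragraph that precedes it, so the main work is simply to package those remarks into the three stated items.
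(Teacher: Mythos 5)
Your proof is correct and follows essentially the same route as the paper, which likewise reads parts $(i)$ and $(iii)$ directly off the observations assembled in the paragraph preceding the lemma. The only (harmless) difference is that the paper deduces $(ii)$ from $(i)$ --- since $\la_l(u)$ being a prefix of the uphill $\la_l(w)$ immediately gives $\up(u)\leq\up(w)$ with equality exactly when $\ka(u)=\ka(w)$ --- whereas you deduce it from $(iii)$ via the fact that $\ka(w)$ is the unique letter of maximal height in $\ep(w)=\ep(\ka(w))$; both arguments are equally valid.
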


\begin{proof}
$(i)$ and $(iii)$ are just particular cases of the observations made above, and $(ii)$ follows obviously from $(i)$.
\end{proof}

\begin{prop}\label{desR}
Let $u,v\in \mxm$. Then:
\begin{itemize}
\item[$(i)$] $u\leq_{\Rc} v$ \iff\ $\la_l(v)$ is a prefix of $\la_l(u)$. Thus $v$ covers $u$ for $\leq_{\Rc}$ \iff\ $\la_l(u)=\la_l(v)\ka(u)$.
\item[$(ii)$] $u\leq_{\Lc} v$ \iff\ $\la_r(v)$ is a suffix of $\la_r(u)$. Thus $v$ covers $u$ for $\leq_{\Lc}$ \iff\ $\la_r(u)=\ka(u)\la_r(v)$.
\item[$(iii)$] $u\leq_{\Jc} v$ \iff\ $\ka(v)\preceq\ka(u)$. Thus $v$ covers $u$ for $\leq_{\Jc}$ \iff\ $\ka(v)\in\{(\ka(u))^l,(\ka(u))^r\}$.
\end{itemize}
\end{prop}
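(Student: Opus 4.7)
My plan is to treat each equivalence's forward and backward directions separately, using Lemma~\ref{odot} for the forward directions and explicit algebraic constructions refined by Lemma~\ref{updownhill} for the backward directions. The three forward directions are immediate. For $(i)$: $u\leq_\Rc v$ means $u=v\odot w$ for some $w\in\mxm$, so Lemma~\ref{odot}.$(i)$ places $\la_l(v)$ as a prefix of $\la_l(u)$. Item $(ii)$ is dual. For $(iii)$: $u\leq_\Jc v$ means $u=s\odot v\odot t$ for some $s,t\in\mxm$, and two applications of Lemma~\ref{odot}.$(iii)$ yield $\ka(v)\in\ep(v)\subseteq\ep(u)=\ep(\ka(u))$, hence $\ka(v)\preceq\ka(u)$.

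For the backward direction of $(i)$, I will exhibit $w\in\mxm$ with $u=v\odot w$ by proving the sharper identity $(v\odot\cev v)\odot u=u$, then taking $w=\cev v\odot u$. Lemma~\ref{updownhill} applied to the downhill $\la_r(v)$ gives $\la_r(v)\cev{\la_r(v)}\ap\ka(v)$, which collapses the center of $v\cdot\cev v=\la_l(v)*\la_r(v)*\cev{\la_r(v)}*\cev{\la_l(v)}$ to give $v\odot\cev v=\la_l(v)*\cev{\la_l(v)}$, a mountain. Now writing $\la_l(u)=\la_l(v)*w_0$ where $w_0$ is the uphill from $\ka(v)$ to $\ka(u)$ guaranteed by the prefix hypothesis, one has
\[
(v\odot\cev v)\cdot u\ap\la_l(v)*\cev{\la_l(v)}*\la_l(v)*w_0*\la_r(u),
\]
and Lemma~\ref{updownhill} applied to the downhill $\cev{\la_l(v)}$ gives $\cev{\la_l(v)}\la_l(v)\ap\ka(v)$, collapsing the center to $\la_l(v)*w_0*\la_r(u)=\la_l(u)*\la_r(u)=u$. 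The covering clause follows because any intermediate $u<_\Rc w<_\Rc v$ would correspond to a strict prefix of $\la_l(u)$ strictly extending $\la_l(v)$, which is possible unless the extension is the single letter $\ka(u)$. Item $(ii)$ is proved dually, using $u\odot(\cev v\odot v)=u$.

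For the backward direction of $(iii)$, Lemma~\ref{ground}.$(ii)$ supplies an uphill $w=g_0\cdots g_n$ with $g_0=\ka(v)$ and $g_n=\ka(u)$. I set
\[
s=\la_l(u)*\cev w*\cev{\la_l(v)},\qquad t=\cev{\la_r(v)}*w*\la_r(u),
\]
both well-defined mountains with peak $\ka(u)$, and verify $s\odot v\odot t=u$ through three successive applications of Lemma~\ref{updownhill}: to $\cev{\la_l(v)}$ in $s\cdot v$, reducing $s\odot v$ to $\la_l(u)*\cev w*\la_r(v)$; to $\la_r(v)$ in $(s\odot v)\cdot t$, reducing further to $\la_l(u)*\cev w*w*\la_r(u)$; and to $\cev w$, collapsing the valley $\cev w*w$ to $\ka(u)$ and leaving $u$. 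Hence $u\leq_\Jc v$. The covering clause is immediate from the decomposition $\ep(\ka(u))=\ep((\ka(u))^l)\cup\{\ka(u)\}\cup\ep((\ka(u))^r)$: the $\preceq$-immediate predecessors of $\ka(u)$ are exactly $(\ka(u))^l$ and $(\ka(u))^r$.

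I expect the main source of care to be applying Lemma~\ref{updownhill} with the correct orientation at each step, always placing the downhill as its first argument so that $\si$ selects the top of the intervening valley, and correctly tracking the absorption of the double $1$'s at each junction between consecutive mountain factors via the relation $1\cdot1\ap1$ in $\rho_e$.
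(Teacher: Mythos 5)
Your proposal is correct and follows essentially the same route as the paper: forward implications via Lemma~\ref{odot}, and backward implications by exhibiting explicit witnesses built from reversed hills and collapsing the resulting valleys with Lemma~\ref{updownhill} (your $s,t$ in $(iii)$ are literally the paper's $w_1,w_2$, and your witness $\cev{v}\odot u$ in $(i)$ is the canonical form of the paper's $\cev{\la_r(v)}*u_1$). The only cosmetic difference is that in $(i)$ you verify $(v\odot\cev{v})\odot u=u$ and invoke associativity of $\odot$ rather than checking $v\odot w=u$ directly, and you spell out the covering clauses that the paper dismisses as immediate.
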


\begin{proof}
Note that, in the three items of this proposition, the second part of each one is an immediate consequence of the first part. Furthermore, $(ii)$ is the dual of $(i)$. Hence, we shall prove only the first part of $(i)$ and $(iii)$.

$(i)$. Assume that $u\leq_{\Rc} v$. Thus $u=v\odot w$ for some $w\in\mxm$. By Proposition \ref{odot}.$(i)$, $\la_l(v)$ is a prefix of $\la_l(u)$. Assume now that $\la_l(v)$ is a prefix $\la_l(u)$. Then $u=\la_l(v)*u_1$ for some landscape $u_1$ with $\tau(\la_l(v))=\ka(v)=\si(u_1)$. Let $v_1=\la_r(v)$ and consider $w=\cevm{v_1}*u_1$. Clearly $w\in\mxm$ and $v_1*(\cevm{v_1})\xr{*} \ka(v)$. Hence
$$v*w=\la_l(v)*v_1*\cevm{v_1}*u_1\xr{*}\la_l(v)*\ka(v)*u_1=\la_l(v)*u_1=u\,,$$
that is, $u=v\odot w$ and $u\leq_{\Rc} v$. 

$(iii)$. If $u\leq_{\Jc} v$, then $u=w_1\odot v\odot w_2$ for two mountains $w_1$ and $w_2$. By Lemma \ref{odot}.$(iii)$, $\ep(\ka(v))=\ep (v)\subseteq\ep(u)=\ep(\ka(u))$ and $\ka(v)\preceq\ka(u)$. Conversely, if $\ka(v)\preceq\ka(u)$, then let $u_1$ be an uphill such that $\si(u_1)=\ka(v)$ and $\tau(u_1)=\ka(u)$, whose existence is guaranteed by Lemma \ref{ground}.$(ii)$. Observe that $w_1=\la_l(u)*\cevm{u_1}*\cevl{\la_l(v)}$ and $w_2=\cevl{\la_r(v)}*u_1*\la_r(u)$ are well defined mountains. Further,
$$\begin{array}{ll}
w_1*v*w_2\!\!\! & =\la_l(u)*\cevm{u_1}*(\cevl{\la_l(v)}*\la_l(v))*(\la_r(v)*\cevl{\la_r(v)})*u_1*\la_r(u) \\ [.2cm]
& \xr{*} \la_l(u)*\cevm{u_1}*\ka(v)*\ka(v)*u_1*\la_r(u) \\ [.2cm]
& \xr{*} \la_l(u)*\ka(u)*\la_r(u)=u\,,
\end{array}$$
and $w_1\odot v\odot w_2=u$. Consequently, $u\leq_{\Jc} v$.
\end{proof}

\begin{cor}\label{R}
Let $u,v\in\mxm$. Then
\begin{itemize}
\item[$(i)$] $u\Rc v$ \iff\ $\la_l(u)=\la_l(v)$.
\item[$(ii)$] $u\Lc v$ \iff\ $\la_r(u)=\la_r(v)$.
\item[$(iii)$] $u\Jc v$ \iff\ $\ka(u)=\ka(v)$.
\item[$(iv)$] $u\Hc v$ \iff\ $u=v$.
\item[$(v)$] $\Dc =\Jc$.
\end{itemize}
\end{cor}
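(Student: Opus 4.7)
The plan is to derive each of the five items directly from Proposition~\ref{desR} by combining the two directions of each quasi-order. For (i), $u\Rc v$ means both $u\leq_{\Rc}v$ and $v\leq_{\Rc}u$, so by Proposition~\ref{desR}(i) each of $\la_l(u)$ and $\la_l(v)$ is a prefix of the other and they coincide. Item (ii) is dual via Proposition~\ref{desR}(ii). For (iii), $u\Jc v$ gives $\ka(u)\preceq\ka(v)$ and $\ka(v)\preceq\ka(u)$, and antisymmetry of the partial order $\preceq$ on $\Gx$ (noted right after Lemma~\ref{ground}) yields $\ka(u)=\ka(v)$.

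Item (iv) follows by combining (i) and (ii): $u\Hc v$ forces $\la_l(u)=\la_l(v)$ and $\la_r(u)=\la_r(v)$, and since every $w\in\mxm$ satisfies $w=\la_l(w)*\la_r(w)$, we conclude $u=v$.

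The only substantive step is (v). The inclusion $\Dc\subseteq\Jc$ is automatic. For the converse, given $u\Jc v$, item (iii) yields $\ka(u)=\ka(v)$, and I would exhibit a common $\Rc$/$\Lc$-neighbour by setting
\[
w=\la_l(u)*\la_r(v).
\]
Since $\tau(\la_l(u))=\ka(u)=\ka(v)=\si(\la_r(v))$, the $*$-concatenation is well defined; the result is a maximal uphill followed by a maximal downhill meeting at a single peak, hence a mountain with $\la_l(w)=\la_l(u)$ and $\la_r(w)=\la_r(v)$. Applying (i) and (ii) gives $u\Rc w\Lc v$, so $u\Dc v$.

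No step presents a real obstacle; the only point requiring a moment's attention is the degenerate case $u=1$ in (iii) and (v), but Proposition~\ref{desR}(iii) already forces $\ka(v)=1$, hence $v=1$, in that situation, so it causes no difficulty. The whole corollary is thus essentially bookkeeping on top of Proposition~\ref{desR} together with the unique decomposition $w=\la_l(w)*\la_r(w)$ of mountains.
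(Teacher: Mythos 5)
Your proof is correct and follows essentially the same route as the paper: items $(i)$--$(iii)$ by combining both directions of Proposition~\ref{desR}, item $(iv)$ from $(i)$ and $(ii)$ via the decomposition $w=\la_l(w)*\la_r(w)$, and item $(v)$ by exhibiting the intermediate mountain $w=\la_l(u)*\la_r(v)$ with $u\Rc w\Lc v$. No issues.
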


\begin{proof}
The first three statements follow from the corresponding statements of Proposition \ref{desR}. $(iv)$ is a consequence of $(i)$ and $(ii)$. So, we only need to prove that $\Jc\subseteq\Dc$. Assume that $u\Jc v$. Then $w=\la_l(u)*\la_r(v)\in \mxm$ since $\ka(u)=\ka(v)$ by $(iii)$. Now, by $(i)$ and $(ii)$, we conclude that $u\Rc w\Lc v$, whence $u\Dc v$.
\end{proof}

The previous corollary tells us that the $\Dc=\Jc$-classes of $\ftxm$ are in one-to-one correspondence with the elements of $\Gx$, and that the set $\{[g]\,|\;g\in\Gx\}$ is a transversal (or cross-section) for the set of $\Dc$-classes of $\ftxm$. The next result gives us the size of each $\Rc$, $\Lc$ and $\Dc$-class of $\ftxm$.

\begin{cor}\label{size}
If $g\in \Gix$ for $i\geq 1$, then $|\Rcc_{[g]}|=2^{i-1}=|\Lcc_{[g]}|$ and $|\Dcc_{[g]}|=2^{2i-2}$. Further, $\Dcc_{[g]}$ has $2^{i-1}$ $\Rc$-classes and $2^{i-1}$ $\Lc$-classes. 
\end{cor}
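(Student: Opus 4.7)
The plan is to work in the mountain model $(\mx,\odot)$ of Proposition \ref{model}: every nontrivial $\rho$-class corresponds to a unique mountain $u$, which decomposes uniquely as $u=\la_l(u)*\la_r(u)$ with $\la_l(u)$ an uphill from $1$ to $\ka(u)$ and $\la_r(u)$ a downhill from $\ka(u)$ to $1$; in particular $[g]$ corresponds to $\be(g)$, whose left hill is $\la_l(g)$ and whose peak is $g$. Combining this with Corollary \ref{R} translates the three cardinalities into counting problems: $\Rcc_{[g]}$ is the set of mountains $\la_l(g)*v$ where $v$ ranges over downhills from $g$ to $1$, $\Lcc_{[g]}$ is dual, and $\Dcc_{[g]}$ is the set of all mountains $w*v$ with $w$ an uphill from $1$ to $g$ and $v$ a downhill from $g$ to $1$.

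The key combinatorial lemma, which I will establish by induction on $i=\up(g)$, is that for $g\in\Gix$ with $i\geq 1$ the number $N(g)$ of downhills from $g$ to $1$ equals $2^{i-1}$. The base case $i=1$ is immediate: $g\in X$ forces $g^l=g^r=1$, so the unique downhill is $g1$. For $i\geq 2$, every downhill from $g$ to $1$ starts with $g$ followed by a downhill from either $g^l$ or $g^r$ to $1$; since the definition of $\Gix$ requires $g^l\neq g^r$, these two families are disjoint (the second letter identifies the branch), so the induction hypothesis applied to $g^l,g^r\in\G_{i-1}(X)$ yields
\[ N(g)=N(g^l)+N(g^r)=2^{i-2}+2^{i-2}=2^{i-1}. \]
The count for uphills from $1$ to $g$ is the same, either by the dual argument or by observing that reversal $u\mapsto\cev{u}$ is a bijection between uphills ending at $g$ and downhills starting at $g$.

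The corollary is then pure bookkeeping. Distinct downhills $v$ give distinct mountains $\la_l(g)*v$, so $|\Rcc_{[g]}|=N(g)=2^{i-1}$, and dually $|\Lcc_{[g]}|=2^{i-1}$. The assignment $(w,v)\mapsto w*v$ is a bijection between pairs (uphill $1\to g$, downhill $g\to 1$) and mountains with peak $g$, with inverse $u\mapsto(\la_l(u),\la_r(u))$; hence $|\Dcc_{[g]}|=2^{i-1}\cdot 2^{i-1}=2^{2i-2}$. Finally, inside $\Dcc_{[g]}$ the $\Rc$-class of $u$ is determined by $\la_l(u)$ alone (Corollary \ref{R}.$(i)$), so there are as many $\Rc$-classes as uphills from $1$ to $g$, namely $2^{i-1}$, and dually $2^{i-1}$ $\Lc$-classes.

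The only place where care is actually needed is the disjointness step in the recursion for $N(g)$, which relies essentially on the built-in restriction $g^l\neq g^r$ in the definition of $\Gix$; everything else is formal manipulation of the unique mountain decomposition.
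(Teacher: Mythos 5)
Your proposal is correct and follows essentially the same route as the paper: both reduce the claim via Corollary \ref{R} to counting downhills from $g$ to $1$, obtain $2^{i-1}$ from the binary choice $g_{j-1}^l\neq g_{j-1}^r$ at each intermediate step, and finish by duality and the triviality of $\Hc$-classes. The only cosmetic difference is that you package the downhill count as an induction $N(g)=N(g^l)+N(g^r)$ where the paper counts the $i-1$ independent binary choices directly.
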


\begin{proof}
We only need to prove that $|\Rcc_{[g]}|=2^{i-1}$ since $|\Lcc_{[g]}|=2^{i-1}$ follows by duality and the statements about $\Dc$ follow from the statements about $\Rc$ and $\Lc$ and from Corollary \ref{R}.$(iv)$.

From Corollary \ref{R}.$(i)$, the size of $\Rcc_{[g]}$ is equal to the number of downhills from $g$ to $1$. Since $g\in \Gix$, each downhill from $g$ to $1$ is of the form $g_0g_1\cdots g_i$ where $g_0=g$, $g_i=1$ and $g_j\in\{g_{j-1}^l,g_{j-1}^r\}$ for $1\leq j\leq i-1$. Moreover, $g_{j-1}^l\neq g_{j-1}^r$ for $1\leq j\leq i-1$ by definition of $\Gx$. Hence, there are $2^{i-1}$ downhills from $g$ to $1$, and $|\Rcc_{[g]}|=2^{i-1}$.
\end{proof}

We have already seen that $[g]\in S([g^rg^c],[g^cg^l])$ in Lemma \ref{prodelem}.$(iv)$. The next result tells us more. It tells us that $S([g^rg^c],[g^cg^l])$ has no other elements.

\begin{lem}\label{sandwich}
Let $g\in \Gx$ such that $\up(g)\geq 2$. Then 
$$S([g^rg^c],[g^cg^l])=\{[g]\}$$
in $\ftxm$.
\end{lem}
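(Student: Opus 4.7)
The plan is to work in the mountain model $(\mx,\odot)$ of Proposition \ref{model} and to use the characterizations in Proposition \ref{desR} of $\leq_{\Rc}$, $\leq_{\Lc}$, $\leq_{\Jc}$ in terms of hills and grounds. Since Lemma \ref{prodelem}.$(iv)$ already supplies $[g]\in S([g^rg^c],[g^cg^l])$, only uniqueness needs proof.

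As a preliminary step, I would compute the three canonical mountains $\be(g^cg^l)$, $\be(g^rg^c)$ and $\be(g^rg^cg^l)$ (noting that $[g^rg^c][g^cg^l]=[g^rg^cg^l]$ because $g^cg^c\ap g^c$). By Lemma \ref{valelem}.$(iii)$ each $\be_1(u)$ reduces to $\la_l(g_0)*u*\la_r(g_n)$ for $u=g_0\cdots g_n$; in the first two cases the resulting landscapes have strictly monotone up-then-down height profiles, so no rivers remain, whereas in $\la_l(g^r)*g^rg^cg^l*\la_r(g^l)$ the unique river $g^c$ uplifts to the triple $(g^l,g^c,g^r)$, which is $g$ itself by the very definition of $\Gx$. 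Thus $\be(g^cg^l)$ is the mountain with peak $g^l$, left hill $\la_l(g^c)\cdot g^l$ and right hill $\la_r(g^l)$; $\be(g^rg^c)$ is the mountain with peak $g^r$, left hill $\la_l(g^r)$ and right hill $g^r\cdot\la_r(g^c)$; and $\be(g^rg^cg^l)$ is a mountain with peak $g$.

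Now let $[v]\in S([g^rg^c],[g^cg^l])$ and set $w=\be(v)$. From $[g^cg^l]\odot w=w$ and Proposition \ref{desR}.$(i)$, the prefix of $\la_l(w)$ of length $\up(g)$ equals $\la_l(g^c)\cdot g^l$, so $g^l\in\ep(\ka(w))$; dually $w\odot[g^rg^c]=w$ and Proposition \ref{desR}.$(ii)$ give $g^r\in\ep(\ka(w))$. From the sandwich condition $[g^rg^c]\odot w\odot[g^cg^l]=[g^rg^cg^l]$ and Lemma \ref{odot}.$(iii)$, $\ep(w)\subseteq\ep(\be(g^rg^cg^l))=\ep(g)$, so $\ka(w)\preceq g$. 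Since $\ep(g)=\ep(g^l)\cup\{g\}\cup\ep(g^r)$ and $g^l\neq g^r$ have common height $\up(g)-1$, neither lies in the ground of the other; hence $g$ is the only element of $\ep(g)$ whose ground contains both $g^l$ and $g^r$, forcing $\ka(w)=g$.

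To conclude, $\la_l(w)$ has length $\up(g)+1$; its fixed length-$\up(g)$ prefix $\la_l(g^c)\cdot g^l$ together with terminal letter $\ka(w)=g$ give $\la_l(w)=\la_l(g^c)\cdot g^l\cdot g$, which by direct inspection of $\be_1(g)$ is exactly $\la_l(g)$. Dually $\la_r(w)=\la_r(g)$, so $w=\la_l(g)*\la_r(g)=\be_1(g)=\be(g)$ and $[v]=[g]$. The main technical obstacle is the preliminary canonical-form calculation, and in particular the observation that the river-uplift in $g^rg^cg^l$ reproduces the letter $g$ itself; this is precisely the content of the definition of $\Gx$ and is what makes the ground-comparison in the third paragraph pin down $\ka(w)$ uniquely.
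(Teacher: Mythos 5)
Your proof is correct and follows essentially the same route as the paper's: both arguments pin down $\la_l(w)=\la_l(g)$ and $\la_r(w)=\la_r(g)$ from the absorption identities $[g^cg^l][v]=[v]=[v][g^rg^c]$ via the prefix/suffix characterizations of $\leq_{\Rc}$ and $\leq_{\Lc}$, and then conclude $w=\be_1(g)\ap g$. The only divergence is in establishing $\ka(w)=g$: the paper obtains it from $[v]\in V([g^rg^cg^l])$, hence $[v]\Dc [g]$, together with Corollary~\ref{R}, whereas you use the third sandwich identity $[g^rg^c][v][g^cg^l]=[g^rg^cg^l]$ plus a ground argument (and your explicit computation of $\be(g^rg^cg^l)$, with the river $g^c$ uplifting to $g$ itself, is also correct); both are valid.
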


\begin{proof}
Let $u\in\mxm$ be such that $[u]\in S([g^rg^c],[g^cg^l])$. Then $\ka(u)=g$ by Corollary \ref{R} and since $[u]\Dc [g]$. Let $v$ be the mountain $\la_l(g^c)\la_r(g^l)$. By Lemma \ref{valelem}.$(iii)$, $\be_1(g^cg^l)\xr{*} v$, and so $g^cg^l\ap v$. Hence,
$$u\ap g^cg^lu\ap vu\ap v\odot u\,,$$
and by Lemma \ref{odot}.$(i)$ and Proposition \ref{mrhoinv}.$(iii)$, $\la_l(v)=\la_l(g^c)g^l$ is a prefix of $\la_l(u)$. Therefore, $\la_l(u)=\la_l(g^c)g^lg$ because $\ka(u)=g$. We have shown that $\la_l(u)=\la_l(g)$. Dually, we conclude also that $\la_r(u)=\la_r(g)$. Consequently, $u=\be_1(g)\ap g$ and $S([g^rg^c],[g^cg^l])=\{[g]\}$.
\end{proof}

We have now all the ingredients necessary to show that $\ftx$ is weakly idempotent generated by $X$.

\begin{prop}\label{weakly}
$\ftx$ is weakly idempotent generated by $X$.
\end{prop}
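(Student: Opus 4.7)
The plan is to show that if $T$ is any regular subsemigroup of $\ftx$ containing $X$, then $T=\ftx$. Every element of $\ftx$ is represented by some word $u=g_0\cdots g_n\in\Gx^{\oplus}$, so $[u]=[g_0]\cdots[g_n]$ is a product of the generators $[g_i]$ with $g_i\in\Gx$. Consequently, it suffices to prove that $[g]\in T$ for every $g\in\Gx$, and I will do this by induction on the height $\up(g)$.

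The base case $\up(g)=1$ is immediate, since $\G_1(X)=X\subseteq T$ by hypothesis. For the inductive step, fix $g\in\Gix$ with $i\geq 2$ and assume $[h]\in T$ for every $h\in\Gx$ with $\up(h)<i$. By the definition of $\Gx$ we have $g^l,g^r\in\G_{i-1}(X)$, and either $g^c\in\G_{i-2}(X)$ (when $i\geq 3$) or $g^c=1$ (when $i=2$). Either way the inductive hypothesis puts the elements $[g^r g^c]$ and $[g^c g^l]$ in $T$: for $i\geq 3$ they are products of three elements already in $T$, while for $i=2$ they reduce to $[g^r]$ and $[g^l]$, which belong to $X\subseteq T$. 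By Lemma~\ref{prodelem}.$(ii)$ both $[g^r g^c]$ and $[g^c g^l]$ are idempotents.

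The crucial ingredient is then Lemma~\ref{sandwich}, which asserts that in $\ftx$ the sandwich set $S([g^r g^c],[g^c g^l])$ equals $\{[g]\}$. Because $T$ is regular, the sandwich set $S_T([g^r g^c],[g^c g^l])$ formed inside $T$ is nonempty. Moreover, any element of $S_T([g^r g^c],[g^c g^l])$ is an idempotent satisfying the sandwich conditions $f w=w=w e$ and $ewf=ef$ inside $T$, hence also inside the larger semigroup $\ftx$, so
\[S_T([g^r g^c],[g^c g^l])\subseteq S([g^r g^c],[g^c g^l])=\{[g]\}.\]
Combining nonemptiness with this inclusion forces $[g]\in T$, which completes the induction and hence the proof.

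The only real subtlety — and the main point one needs to check carefully — is the passage from the sandwich set computed inside $T$ to the sandwich set computed inside $\ftx$; once this is settled, the uniqueness assertion of Lemma~\ref{sandwich} does all the work, locking $[g]$ into every regular subsemigroup containing the two idempotents immediately below it. The step-by-step ascent up the height hierarchy then forces all of $\Gx$, and therefore all of $\ftx$, into $T$.
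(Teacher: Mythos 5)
Your proof is correct and follows essentially the same route as the paper: induction on the height of $g$, using the regularity of $T$ to obtain a nonempty sandwich set and Lemma~\ref{sandwich} to force it to be $\{[g]\}$. The one point you flag as subtle — that a sandwich element computed inside $T$ is also one in $\ftx$ — is exactly the (standard) fact the paper uses implicitly when it writes $S([g^rg^c],[g^cg^l])\cap S\neq\emptyset$, and your justification of it via the equational characterization of sandwich sets is sound.
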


\begin{proof}
Let $S$ be a regular subsemigroup of $\ftx$ containing $[X]$. Since $[\Gx]$ generates $\ftxm$ and $[1]$ is the identity element of $\ftxm$, it is enough to prove that $[\Gx]\subseteq S\cup\{[1]\}$ to conclude that $S=\ftx$ and that $\ftx$ is weakly idempotent generated by $X$. We show that $[\Gix]\subseteq S$, for $i\geq 1$, by induction on $i$.

Clearly $[\G_1(X)]=[X]\subseteq S$ by definition of $S$. Let $i\geq 2$ and assume that $[\G_j(X)]\subseteq S$ for all $1\leq j< i$. So, if $g\in \Gix$, then $[g^l],[g^c],[g^r]\in S\cup\{[1]\}$. Consequently, also $[g^cg^l]$ and $[g^rg^c]$ belong to $S$. Now, since $S$ is a regular semigroup, $S([g^rg^c],[g^cg^l])\cap S\neq\emptyset$. By the previous lemma, $[g]\in S$ and $[\Gix]\subseteq S$.
\end{proof}

\begin{cor}\label{weaklywg}
Let $\phi':X\to \ftx$ be the ``identity'' mapping. Then 
$$(\ftx,\phi':X\to\ftx)\in\wg\,.$$
\end{cor}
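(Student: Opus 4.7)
The plan is to verify directly the four conditions required by the definition of the category $\wg$: that $\ftx$ is regular, that $\phi'$ is one-to-one, that $\ftx$ is weakly generated by $X\phi'$, and that $X\phi'\subseteq E(\ftx)$.

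First I would observe that $\ftx$ is regular. Proposition \ref{reg} already shows that $\ftxm$ is a regular monoid, and a brief argument shows that removing the identity $[1]$ preserves regularity: if $a\in\ftx$ had its only inverse equal to $[1]$, then the axiom $a[1]a=a$ would force $a^2=a$ and $[1]a[1]=[1]$ would force $a=[1]$, a contradiction. Thus every element of $\ftx$ has an inverse inside $\ftx$.

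Next I would check that $\phi'$ is one-to-one and that $X\phi'\subseteq E(\ftx)$. For injectivity, let $x,y\in X$ be distinct. Each $x\in X$ has $\up(x)=1$, so by the definition of $\be_1$ the canonical form of $x$ is $\be(x)=1x1$ (which is a mountain and therefore equals its own canonical form by Lemma \ref{valelem}.$(v)$). Since $1x1\neq 1y1$ as words in $\mxm$, Proposition \ref{mrhoinv}.$(iii)$ gives $[x]\neq[y]$ in $\ftx$. For idempotence, the defining relation $(x^2,x)\in\rho_e$ immediately yields $[x]^2=[x]$, so $[x]\in E(\ftx)$.

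Finally, the fact that $\ftx$ is weakly generated by $X\phi'=[X]$ is exactly the content of Proposition \ref{weakly}. Combining the four verifications gives $(\ftx,\phi')\in\wg$. There is no genuine obstacle here — the corollary is essentially a bookkeeping step that packages the preceding results into the formal setting of the category $\wg$; the only point that requires more than one line is the injectivity of $\phi'$, which is handled by appealing to the canonical form theorem (Proposition \ref{mrhoinv}.$(iii)$) to separate the classes $[x]$ and $[y]$.
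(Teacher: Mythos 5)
Your proposal is correct and matches the paper's (implicit) approach: the paper states this corollary without proof as an immediate consequence of Proposition \ref{weakly}, and your verification of the remaining conditions (regularity of $\ftx$, injectivity of $\phi'$ via canonical forms, and idempotence of $[x]$ from $\rho_e$) is exactly the routine bookkeeping the author leaves to the reader. All four checks are sound, including the short argument that deleting $[1]$ preserves regularity.
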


We have already remarked that regular semigroups weakly generated by  a set of idempotents are idempotent generated. However, we cannot guarantee that they are generated by a finite set of idempotents even if they are weakly generated by a finite set of idempotents. For example, $\ftn$ is weakly generated by $n$ idempotents, but it is not generated by any finite set of idempotents if $n\geq 2$.

Let us turn now to the identification of idempotents and of inverses of elements of $\ftxm$. For that purpose we need the notion of a gorge. 
\begin{description}
\item[Gorge] canyon $w$ such that $w\xr{*}\si(w)=\tau(w)$.
\end{description}
Note that we have already encounter gorges earlier. For example, in Lemma \ref{updownhill}, we prove that $u\cev{u}$ is a gorge for every downhill $u$. Another example appears in Lemma \ref{valelem}.$(i)$, where we prove that $\la_r(g)*\la_l(g)$ is a gorge too. Note also that if $w$ is a gorge and $w\xr{*}w_1$, then $w_1\xr{*}\si(w)$, but we cannot guarantee that $w_1$ is a gorge since it may not be a valley. In the next result we characterize the idempotents, the inverses of an element, and the natural partial order on $\mxm$ using this notion of a gorge.

\begin{prop}\label{idgorges}
If $u,v\in\mxm$, then:
\begin{itemize}
\item[$(i)$] $u$ is an idempotent \iff\ the canyon $\la_r(u)*\la_l(u)$ is a gorge.
\item[$(ii)$] $v$ is an inverse of $u$ \iff\ $\la_r(u)*\la_l(v)$ and $\la_r(v)*\la_l(u)$ are both gorges.
\item[$(iii)$] $v<u$ \iff\ $\la_l(v)=\la_l(u)\,u_1$ and $\la_r(v)=u_2\,\la_r(u)$ for some $u_1,u_2\in \Gxp$ such that $u_2\,\ka(u)\,u_1$ is a gorge.
\end{itemize}
\end{prop}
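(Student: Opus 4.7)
My plan is to handle the three items in order, each time reducing to an analysis of the uplifting rewriting system $\to$ applied to concatenations of hills in the mountain model $(\mx,\odot)$.

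For \textbf{(i)}, the decomposition
\[
u * u \;=\; \la_l(u) * c * \la_r(u),\qquad c := \la_r(u)*\la_l(u),
\]
displays the ``middle'' as a canyon whose two endpoints are both $\ka(u)$. If $c$ is a gorge then $c\xr{*}\ka(u)$, so $u*u\xr{*}\la_l(u)*\ka(u)*\la_r(u)=u$, giving $u\odot u=u$. For the converse, I will use confluence of $\to$ (Newman's lemma, already invoked in the paper) to first uplift the rivers that live strictly inside $c$, producing $\la_l(u)*\be_2(c)*\la_r(u)$. A short height-profile argument shows that any canyon with no rivers and shared endpoint $\ka(u)$ is either the singleton $\ka(u)$ or a ``roof'' $\ka(u)\cdots p\cdots\ka(u)$ with a unique peak $p$ of height strictly greater than $\up(\ka(u))$. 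In the latter case $\la_l(u)*\be_2(c)*\la_r(u)$ is itself a mountain with peak $p\ne\ka(u)$, hence distinct from $u$, contradicting $u\odot u=u$; therefore $\be_2(c)=\ka(u)$ and $c$ is a gorge.

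For \textbf{(ii)}, the standard regular-semigroup identities $u\Rc uv\Lc v$ and $v\Rc vu\Lc u$ for $v\in V(u)$, together with Corollary~\ref{R}(iii), force $\ka(u)=\ka(v)$. Grouping
\[
uvu \;=\; \la_l(u)*\bigl(\la_r(u)*\la_l(v)\bigr)*\bigl(\la_r(v)*\la_l(u)\bigr)*\la_r(u) \;=\; \la_l(u)*c_1*c_2*\la_r(u),
\]
one obtains two canyons $c_1,c_2$ both at $\ka(u)$, and repeating the confluence / height-profile analysis of (i) on the compound middle $c_1*c_2$ yields $\be_2(uvu)=u$ iff both $c_1$ and $c_2$ are gorges. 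Expanding $vuv$ analogously recycles exactly the same two canyons, so no further condition is imposed; the converse direction reduces to a direct calculation.

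For \textbf{(iii)}, the implications $v\leq u\Rightarrow v\leq_{\Rc}u$ and $v\leq u\Rightarrow v\leq_{\Lc}u$ hold in every regular semigroup, and Proposition~\ref{desR} turns them into the prefix/suffix factorisations $\la_l(v)=\la_l(u)\,u_1$ and $\la_r(v)=u_2\,\la_r(u)$. For the gorge condition I will exploit the characterisation $v\leq u\iff v=eu=uf$ for suitable idempotents $e\Rc v$ and $f\Lc v$; with $e$ chosen so that $\la_l(e)=\la_l(v)=\la_l(u)\,u_1$ and $\ka(e)=\ka(v)$, part~(i) says that $\la_r(e)*\la_l(e)$ is a gorge, and then matching $v=\be_2(e*u)$ against the prescribed hills of $v$ isolates the ``piece above $\ka(u)$'' and forces $u_2\,\ka(u)\,u_1$ to be a gorge as well. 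Conversely, given the three combinatorial conditions, I would build explicit idempotents $e,f$ with $\la_l(e)=\la_l(v)$ and $\la_r(f)=\la_r(v)$ (using the hypothesised gorge on $u_2\,\ka(u)\,u_1$ together with (i) to certify that $e$ and $f$ are idempotent) and verify $eu=v=uf$ in the mountain model. The main obstacle is precisely this third part: the gorge condition on $u_2\,\ka(u)\,u_1$ is not a by-product of $v\leq_{\Rc}u$ and $v\leq_{\Lc}u$, and its extraction requires picking the ``right'' idempotent witness $e$ and carefully tracking how upliftings cross the junction between $e$ and $u$ inside $e*u$; parts~(i) and (ii) both boil down to one clean confluence-plus-height-profile argument, which will be the common engine powering the whole proposition.
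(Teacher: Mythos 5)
Your proposal is correct and follows essentially the same route as the paper's proof: item $(i)$ by isolating the middle canyon of $u*u$ and using confluence of the uplifting system, item $(ii)$ by reducing to the two canyons $\la_r(u)*\la_l(v)$ and $\la_r(v)*\la_l(u)$ after observing $\ka(u)=\ka(v)$, and item $(iii)$ by combining Proposition~\ref{desR} with an idempotent witness from the natural partial order for the forward direction and the explicit idempotents $\la_l(v)*u_2\,\cevl{\la_l(u)}$ and $\cevl{\la_r(u)}\,u_1*\la_r(v)$ for the converse. The only cosmetic difference is that the paper extracts the gorge condition from a right witness $w$ with $v=u\odot w$ and $w\Lc v$, whereas you use the mirror-image left witness $e\Rc v$ with $v=e\odot u$.
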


\begin{proof}
$(i)$. Since $u=\la_l(u)*\la_r(u)$ and $u^2= \la_l(u)*\la_r(u)*\la_l(u)*\la_r(u)$, it is obvious that $u^2\approx u$ \iff\ $\la_r(u)*\la_l(u)\xr{*} \ka(u)$, that is, \iff\ the canyon $\la_r(u)*\la_l(u)$ is a gorge.

$(ii)$. Note that if $v$ is an inverse of $u$, then $\ka(v)=\ka(u)$, and so $\la_r(u)*\la_l(v)$ and $\la_r(v)*\la_l(u)$ are canyons. The proof of $(ii)$ is now obvious as in $(i)$.

$(iii)$. Assume first that $v<u$. In particular, we have also $v\leq_{\Rc} u$ and $v\leq_{\Lc} u$. Thus $\la_l(v)=\la_l(u)u_1$ and $\la_r(v)=u_2\la_r(u)$, for some $u_1,u_2\in \Gxp$, by Proposition \ref{desR} and since $u\neq v$. But, there exists also another mountain $w\in\mxm$ such that
$$w\in E(\mxm),\quad w\Lc v\quad\mbox{ and }\quad v=u\odot w$$
as $v\leq u$. Hence, $\la_r(w)*\la_l(w)$ is a gorge by $(i)$, $\la_r(w)=\la_r(v)$ by Corollary \ref{R}.$(ii)$, and $\la_r(u)*\la_l(w)\xr{*} \ka(u)\,u_1$. Therefore
$$\la_r(w)*\la_l(w)=u_2\,\la_r(u)*\la_l(w)\xr{*} u_2\,\ka(u)\,u_1\,,$$
and consequently $u_2\,\ka(u)\,u_1$ is a gorge because $\la_r(w)*\la_l(w)$ is a gorge too (note that $u_2\,\ka(u)\,u_1$ is a canyon).

Assume now that $\la_l(v)=\la_l(u)\,u_1$ and $\la_r(v)=u_2\,\la_r(u)$ for some $u_1,u_2\in \Gxp$ such that $u_2\,\ka(u)\,u_1$ is a gorge. In particular $u\neq v$. Note also that
$$w_r=\cevl{\la_r(u)}\,u_1*\la_r(v)\quad\mbox{ and }\quad w_l=\la_l(v)*u_2\,\cevl{\la_l(u)}$$
are well defined mountains. In fact, they are both idempotents of $\mxm$. For example,
$$\la_r(v)*\cevl{\la_r(u)}\,u_1 =u_2\,\la_r(u)*\cevl{\la_r(u)}\,u_1\xr{*}u_2\, \ka(u)\, u_1\,;$$
thus $\la_r(v)*\cevl{\la_r(u)}\,u_1$ is a gorge and so $w_r$ is an idempotent. Finally, observe that $u\odot w_r=v$ and $w_l\odot u=v$, whence $v<u$.
\end{proof}

Identifying which canyons are gorges is not an easy task. We leave a deeper study of the gorges for later. Some more information about the structure of $\ftx$ will be gathered from that study. Meanwhile, let us analyze, in the next section, some universal properties that $\ftx$ has in relation with the category $\wg$.

\section{Some universal properties of $\ftx$}\label{sec5}

Let $S$ be a semigroup. A \emph{skeleton mapping} is a mapping $\phi:\Gx\to E(S^1)$ such that
\begin{itemize}
\item[$(i)$] $\phi_{|X}$ is a one-to-one mapping such that $X\phi\subseteq E(S)$;
\item[$(ii)$] $(1\phi)(g\phi)=g\phi=(g\phi)(1\phi)$ for all $g\in\G_1(X)$;
\item[$(iii)$] $g\phi\in S((g^r\phi)(g^c\phi),(g^c\phi)(g^l\phi))$
for all $g\in \Gix$ with $i\geq 2$.
\end{itemize}
Note that, by $(iii)$, $g\phi=(g^c\phi)(g^l\phi)(g\phi)=(g\phi)(g^r\phi)(g^c\phi)$ for any $g\in\G_i(X)$ with $i\geq 2$. It is now trivial to conclude that $(1\phi)(g\phi)=g\phi=(g\phi)(1\phi)$, for any $g\in\Gix$ with $i\geq 0$, by induction on $i$. 

\begin{prop}\label{r52}
If $\phi:\Gx\to E(S^1)$ is a skeleton mapping, then there is a unique (semigroup) homomorphism $\varphi:\ftxm\to S^1$ extending $\phi$ (that is, such that $g\phi=[g]\varphi$ for all $g\in\Gx$). Furthermore, $\langle(\Gx)\phi\rangle=(\ftxm)\varphi$ is a regular monoid with identity element $1\phi$.
\end{prop}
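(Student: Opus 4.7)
The plan is to build $\varphi$ by first extending $\phi$ to the free semigroup $\Gx^+$ in the obvious way, and then to verify that the kernel of this extension contains the generating relations $\rho_e\cup\rho_s$ of $\rho$. Since $\ftxm=\Gx^+/\rho$, this yields a unique factorisation through $\ftxm$. Uniqueness of $\varphi$ is then automatic, because any homomorphism agreeing with $\phi$ on $[\Gx]$ is determined on all of $\ftxm$ (as $[\Gx]$ together with $[1]$ generates the monoid $\ftxm$).

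For the verification, the relations in $\rho_e$ are dealt with directly from the definition of skeleton mapping: $(1\phi)(g\phi)=g\phi=(g\phi)(1\phi)$ holds for every $g\in\Gx$ by condition~(ii) extended inductively on height (as observed just after the definition of skeleton mapping), and $(g\phi)^2=g\phi$ holds because $g\phi\in E(S^1)$. For the relations in $\rho_s$, let $g\in\Gx$ with $\up(g)\geq 2$, write $e=(g^r\phi)(g^c\phi)$ and $f=(g^c\phi)(g^l\phi)$, and recall from the equivalent definition of the sandwich set mentioned in Section~2 that
$$S(e,f)=\{h\in E(S^1)\,|\; fh=h=he \text{ and } ehf=ef\}.$$
Condition~(iii) places $g\phi$ in $S(e,f)$, hence $f(g\phi)=g\phi$ and $(g\phi)e=g\phi$, which are precisely the first two relations of $\rho_s$. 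For the third relation, applying $e(g\phi)f=ef$ together with the idempotency of $g^c\phi$ (so that $(g^c\phi)(g^c\phi)=g^c\phi$) gives
$$(g^r\phi)(g^c\phi)(g\phi)(g^c\phi)(g^l\phi)=e(g\phi)f=ef=(g^r\phi)(g^c\phi)(g^l\phi),$$
as required. Therefore $\bar\phi:\Gx^+\to S^1$ factors through a (semigroup) homomorphism $\varphi:\ftxm\to S^1$ with $[g]\varphi=g\phi$, and it is the only such extension.

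For the final assertion, since $\ftxm$ is generated as a monoid by $[\Gx]$ with identity $[1]$ and $[1]\varphi=1\phi$, the image $(\ftxm)\varphi$ coincides with the submonoid of $S^1$ generated by $(\Gx)\phi$, and has $1\phi$ as its identity element. Regularity is inherited: given $[u]\in\ftxm$ with inverse $[v]\in\ftxm$ (which exists by Proposition~\ref{reg}), the images $[u]\varphi$ and $[v]\varphi$ satisfy the two inverse equations in $S^1$, so $(\ftxm)\varphi$ is a regular monoid. The only real verification is the kernel check above, which is essentially just the translation of the three defining conditions of a skeleton mapping into the three families of relations making up $\rho_e\cup\rho_s$; this match is by design, so no substantial obstacle is expected.
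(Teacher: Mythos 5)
Your proposal is correct and follows essentially the same route as the paper: extend $\phi$ freely to $\Gx^+$, check that $\rho_e\cup\rho_s$ lies in the kernel (the paper leaves this check implicit, citing conditions $(ii)$--$(iii)$, whereas you write it out via the characterization $S(e,f)=\{h\in E(S)\mid fh=h=he,\ ehf=ef\}$), factor through the quotient, and get uniqueness and regularity from generation by $[\Gx]$ and Proposition~\ref{reg}. The only nitpick is the phrase ``submonoid of $S^1$ generated by $(\Gx)\phi$'': as the paper remarks after the proposition, $(\ftxm)\varphi$ is the \emph{subsemigroup} generated by $(\Gx)\phi$, which is a monoid with identity $1\phi$ but need not be a submonoid of $S^1$; your subsequent clause makes clear you intend exactly this.
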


\begin{proof}
Let $\phi_1$ be the unique homomorphism $\phi_1:\Gxp\to S^1$ that extends $\phi$. Note that $\phi_1$ exists, and it is unique, since $\Gxp$ is freely generated by $\Gx$. Moreover, $\rho_e$ is contained in the kernel of $\phi_1$ by the observation made above and since $(\Gx)\phi\subseteq E(S^1)$, while $\rho_s$ is contained in the kernel of $\phi_1$ by $(iii)$. Thus, if $\varphi_1$ denotes the natural quotient homomorphism $\varphi_1:\Gxp\to\ftxm$, then there exists a homomorphism $\varphi:\ftxm\to S^1$ such that $\phi_1=\varphi_1\varphi$. Hence, $\varphi$ is a homomorphism extending $\phi$. If $\varphi':\ftxm\to S^1$ is another homomorphism extending $\phi$, then $\varphi_1\varphi'=\phi_1=\varphi_1\varphi$, and so $\varphi'=\varphi$ since $\varphi_1$ is surjective. Finally,  the second part of this result follows immediately from the fact that $\ftxm$ is a regular monoid with identity element [1] (this property is inherited by homomorphic images).
\end{proof}

We remark that, in the previous proposition, $\ftxm\varphi$ is a regular monoid and a subsemigroup of $S^1$. However, it may not be a submonoid of $S^1$. Notice that $[1]\varphi$ may not be the identity of element of $S^1$, but it works as an identity element for the subsemigroup $(\ftxm)\varphi$.

Note that $X\phi\subseteq E(S)$ by definition of skeleton mapping. Now, by $(iii)$, we can conclude that $g\phi\in E(S)$ for any $g\in G_i(X)$ with $i\geq 1$. Thus $(\Gx\setminus\{1\})\phi\subseteq E(S)$. A \emph{skeleton} of $S$ is the image of $\Gx\setminus\{1\}$ under some skeleton mapping $\phi:\Gx\to E(S^1)$. Hence, a skeleton of $S$ is a subset of $E(S)$ with ``some structure''.

\begin{cor}\label{r53}
If $A$ is a skeleton of $S$, then $\langle A\rangle$ is a regular subsemigroup of $S$.
\end{cor}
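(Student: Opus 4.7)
The plan is to apply Proposition \ref{r52} essentially directly. By definition of a skeleton, there is a skeleton mapping $\phi:\Gx\to E(S^1)$ with $A=(\Gx\setminus\{1\})\phi$, so the proposition yields a homomorphism $\varphi:\ftxm\to S^1$ extending $\phi$, and the image $(\ftxm)\varphi=\langle(\Gx)\phi\rangle$ is a regular monoid whose identity is $1\phi$. All the real content has already been invested in Proposition \ref{r52}; what remains is only to push the regularity down from $(\ftxm)\varphi$ to $\langle A\rangle$.

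The first step is to identify $\langle A\rangle$ inside $(\ftxm)\varphi$. Since $(\Gx)\phi=A\cup\{1\phi\}$ and $1\phi$ acts as a two-sided identity on the whole of $(\ftxm)\varphi$ (and in particular on every element of $A$), any product of generators from $(\Gx)\phi$ either equals $1\phi$ alone or admits every occurrence of $1\phi$ absorbed into an adjacent factor, leaving a nonempty product of elements of $A$, i.e.\ an element of $\langle A\rangle$. Thus
$$ (\ftxm)\varphi \;=\; \langle A\rangle\cup\{1\phi\}, $$
and in particular $\langle A\rangle\subseteq S$ (being a product of elements of $A\subseteq E(S)\subseteq S$).

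The second step is to deduce regularity of $\langle A\rangle$, splitting on whether $1\phi\in\langle A\rangle$. If it is, then $\langle A\rangle=(\ftxm)\varphi$ is a regular monoid and we are done. Otherwise, given $a\in\langle A\rangle$, regularity of $(\ftxm)\varphi$ provides an inverse $a'\in(\ftxm)\varphi$. I claim $a'\neq 1\phi$: if $a'=1\phi$ then $aa'a=a$ forces $a^2=a$, and then $a'aa'=a'$ collapses to $a=1\phi$, contradicting $1\phi\notin\langle A\rangle$. So $a'\in\langle A\rangle$, and the two inverse relations, computed in $S^1$, do not involve $1$ and therefore hold verbatim in $S$, exhibiting $a'$ as an inverse of $a$ inside $S$.

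The only point requiring a moment's care is this case distinction about whether the identity $1\phi$ falls into $\langle A\rangle$ and the short argument that inverses can always be chosen away from $1\phi$; everything else is routine transport along $\varphi$.
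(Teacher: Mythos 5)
Your proposal is correct and follows essentially the same route as the paper: both invoke Proposition \ref{r52} to get that $T=\langle(\Gx)\phi\rangle$ is a regular monoid with identity $1\phi$, observe that $\langle A\rangle$ is either $T$ or $T\setminus\{1\phi\}$, and conclude regularity. You merely spell out the (easy) step the paper leaves implicit, namely that deleting the identity from a regular monoid leaves a regular semigroup because no element other than $1\phi$ can have $1\phi$ as its only inverse.
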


\begin{proof}
Let $X$ be a nonempty set and let $\phi:\Gx\to S^1$ be a skeleton mapping such that $A=(\Gx\setminus\{1\})\phi$. By the previous proposition, $T=\langle (\Gx)\phi\rangle$ is a regular subsemigroup of $S^1$ with an identity element $1\phi$. Thus $\langle A\rangle=T$ or $\langle A\rangle=T\setminus\{1\phi\}$. Consequently, $\langle A\rangle$ is a regular subsemigroup of $S$.
\end{proof}

\begin{lem}\label{r51}
If $S$ is a regular semigroup, then each one-to-one mapping $\phi':X\to E(S)$ can be recursively extended into a skeleton mapping $\phi:\Gx\to E(S^1)$.
\end{lem}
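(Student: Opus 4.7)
The plan is to build $\phi$ by recursion on the height $\up(g)=i$. First set $1\phi:=1_{S^1}$ and $x\phi:=x\phi'$ for every $x\in X=\G_1(X)$. Assuming $\phi$ has been defined on $\G_0(X)\cup\cdots\cup\G_{i-1}(X)$ for some $i\geq 2$, for each $g\in\Gix$ I pick, arbitrarily,
\[ g\phi \in S\bigl((g^r\phi)(g^c\phi),\,(g^c\phi)(g^l\phi)\bigr). \]
Condition $(i)$ in the definition of a skeleton mapping is inherited directly from $\phi'$; condition $(ii)$ is automatic because $1\phi$ is the identity of $S^1$; condition $(iii)$ is exactly the recursion itself.

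The main obstacle is that, at each step, this recursion only makes sense if the two ``corner'' elements $(g^r\phi)(g^c\phi)$ and $(g^c\phi)(g^l\phi)$ are actually idempotents of $S^1$; once that is granted, the regularity of $S$ guarantees the sandwich set is nonempty. I plan to handle this through a strengthened inductive invariant: for every $g\in\Gx$ with $\up(g)\geq 1$ already processed, and every $h\in\{g^l,g^r\}$, both $(h\phi)(g\phi)$ and $(g\phi)(h\phi)$ lie in $E(S^1)$. The base case $\up(g)=1$ is immediate because there $g^l=g^r=1$ and $g\phi\in E(S)$. At the recursion step, the definition of $\Gix$ forces $g^c\in\{(g^l)^l,(g^l)^r\}\cap\{(g^r)^l,(g^r)^r\}$; thus $g^c$ is an ``entry'' of both $g^l$ and $g^r$, and the inductive invariant applied one level below gives idempotency of the two corner products, legitimising the choice of $g\phi$.

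It then remains to verify that the invariant propagates to the freshly chosen $g\phi$. Membership of $g\phi$ in the sandwich set yields
\[ (g^c\phi)(g^l\phi)(g\phi)=g\phi=(g\phi)(g^r\phi)(g^c\phi). \]
Multiplying the first equation on the left, and the second on the right, by the idempotent $g^c\phi$ produces the key absorption identities $(g^c\phi)(g\phi)=g\phi=(g\phi)(g^c\phi)$; substituting these back into the two sandwich identities gives $(g\phi)(g^l\phi)(g\phi)=g\phi$ and symmetrically $(g\phi)(g^r\phi)(g\phi)=g\phi$. Squaring each of the four cross-products $(g\phi)(g^l\phi)$, $(g^l\phi)(g\phi)$, $(g\phi)(g^r\phi)$, $(g^r\phi)(g\phi)$ and applying these identities shows that all four are idempotents of $S^1$, which closes the induction and produces the required skeleton mapping $\phi$.
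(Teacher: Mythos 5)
Your proof is correct and follows essentially the same route as the paper: define $\phi$ recursively on the height, sending $1$ to $1$, $X$ via $\phi'$, and each higher $g$ to an arbitrary element of the sandwich set $S((g^r\phi)(g^c\phi),(g^c\phi)(g^l\phi))$, which is nonempty by regularity. Your strengthened invariant verifying that the two corner products are idempotents is a correct and worthwhile elaboration of a point the paper leaves implicit (the paper's terse proof relies on the extended definition of $S(a,b)$ for arbitrary elements of a regular semigroup given in the preliminaries), but it does not change the underlying argument.
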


\begin{proof}
We define $\phi$ recursively as follows. We begin by setting $1\phi=1$ and $x\phi=x\phi'$ for all $x\in X$. Let $i\geq 2$ and assume that $\phi$ is already well defined for all $g_1\in \G_j(X)$ with $j<i$. Let $g\in \Gix$. By our assumption, $g^l\phi$, $g^c\phi$ and $g^r\phi$ are already well defined. Since $S$ is a regular semigroup, the sandwich set $S((g^r\phi)(g^c\phi),(g^c\phi)(g^l\phi))$ is nonempty. Hence, choose $h\in S((g^r\phi)(g^c\phi),(g^c\phi)(g^l\phi))$ and set $g\phi=h$. By construction, the mapping $\phi:\Gx\to S^1$ becomes a skeleton mapping.
\end{proof}

The construction procedure described in the previous proof has a choice factor and, therefore, we usually have many distinct skeleton mappings extending $\phi'$. In terms of skeletons, if $S$ is regular and $\phi':X\to E(S)$ is a one-to-one mapping, then $S$ has at least one skeleton $A=(\Gx\setminus\{1\})\phi$ obtained from a skeleton mapping $\phi:\Gx\to E(S^1)$ extending $\phi'$, but usually it has more. We shall say that $A$ is a skeleton induced by $\phi'$ (or by $\phi$).

From Lemma \ref{r51} and Proposition \ref{r52}, it is also obvious that for any $(S,\phi':X\to S)\in\wg$, there exists a morphism $\varphi:\ftx\to S$ extending $\phi'$ (note that $\phi'$ is a one-to-one mapping such that $X\phi'\subseteq E(S)$). The next corollary gives us a slightly stronger result.

\begin{cor}
Let $S$ be a semigroup and let $T$ be a regular subsemigroup of $S$ weakly generated by $X\phi'$ for some one-to-one mapping $\phi':X\to E(S)$. Then $T$ is the image of $\ftx$ under some homomorphism $\varphi:\ftx\to S$ that extends $\phi'$.
\end{cor}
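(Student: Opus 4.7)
The plan is to reduce everything to the regular semigroup $T$ itself and then invoke the ``weak generation'' hypothesis to force surjectivity onto $T$.

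First I would observe that, since $X\phi'\subseteq T$ and $T$ is regular, the one-to-one mapping $\phi'$ can be regarded as a mapping $X\to E(T)$. Apply Lemma \ref{r51} to the regular semigroup $T$ to recursively extend $\phi'$ to a skeleton mapping $\phi:\Gx\to E(T^1)$ (so $\phi$ takes values in $T\cup\{1\}$). Then Proposition \ref{r52} provides a unique semigroup homomorphism $\varphi:\ftxm\to T^1$ extending $\phi$, and $(\ftxm)\varphi=\langle(\Gx)\phi\rangle$ is a regular monoid with identity $1\phi$.

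Next I would apply Corollary \ref{r53} to the skeleton $A=(\Gx\setminus\{1\})\phi\subseteq E(T)$ to conclude that $\langle A\rangle$ is a regular subsemigroup of $T$. Since $\phi$ extends $\phi'$, we have $X\phi'=X\phi\subseteq A\subseteq\langle A\rangle$. But $T$ is weakly generated by $X\phi'$, so $T$ has no proper regular subsemigroups containing $X\phi'$; hence $\langle A\rangle = T$.

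Finally, I would restrict $\varphi$ to $\ftx=\ftxm\setminus\{[1]\}$ and compose with the inclusion $T\hookrightarrow S$ to obtain the required homomorphism $\varphi:\ftx\to S$. Every element of $\ftx$ is the $\rho$-class of a word in $\Gx^{\oplus}$, which contains at least one letter of $\Gx\setminus\{1\}$; using property $(ii)$ of a skeleton mapping (so that $1\phi$ acts as a two-sided identity for the elements of $A$), one sees that $[u]\varphi\in\langle A\rangle=T$ for all $[u]\in\ftx$, hence $(\ftx)\varphi\subseteq T$. Conversely, $(\ftx)\varphi$ contains $A$ and is a subsemigroup of $S$, so it contains $\langle A\rangle = T$. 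Therefore $(\ftx)\varphi=T$, and $\varphi$ extends $\phi'$ by construction. The main subtlety to watch carefully is bookkeeping between $T$, $T^1$, $S$ and $S^1$: one must be sure that the image $1\phi$ of $[1]$ (which need not be the identity of $S$) is absorbed so that the restriction to $\ftx$ lands in $T$ rather than in $T\cup\{1\phi\}$; this is precisely what property $(ii)$ of the skeleton mapping guarantees.
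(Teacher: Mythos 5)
Your proposal is correct and follows essentially the same route as the paper: extend $\phi'$ to a skeleton mapping via Lemma \ref{r51}, obtain the homomorphism $\varphi:\ftxm\to T^1$ via Proposition \ref{r52}, and use the hypothesis that $T$ is weakly generated by $X\phi'$ to force the image of $\ftx$ to be all of $T$. The extra care you take with the identity element $1\phi$ (ensuring the restriction to $\ftx$ lands in $\langle A\rangle=T$ rather than $T\cup\{1\phi\}$) is a detail the paper leaves implicit, but it is the same argument.
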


\begin{proof}
Note that $\phi'$ is also a one-to-one mapping from $X$ into $E(T)$. Applying Lemma \ref{r51} and Proposition \ref{r52} to the mapping $\phi':X\to E(T)$, we obtain a homomorphism $\varphi:\ftxm\to T^1$ extending $\phi'$. Then $(\ftx)\varphi$ is a regular subsemigroup of $T$ containing $X\phi'$, and consequently $(\ftx)\varphi=T$ since $T$ is weakly generated by $X\phi'$.  Finally, observe that $\varphi_{|\ftx}$ can be viewed as a homomorphism from $\ftx$ to $S$.
\end{proof}

The next proposition is now an obvious consequence of the previous corollary.

\begin{prop}\label{fiwig}
All objects from $\wg$ are morphic images of $\ftx\in\wg$. 
\end{prop}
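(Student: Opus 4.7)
The plan is to deduce Proposition \ref{fiwig} directly from the previous corollary, simply by applying it in the case where the ambient semigroup and the regular subsemigroup weakly generated by $X$ coincide. The work has essentially already been done in the chain Lemma \ref{r51} $\Rightarrow$ Proposition \ref{r52} $\Rightarrow$ the corollary; what remains is the bookkeeping check that the homomorphism produced there is in fact a morphism in the category $\wg$, i.e. that it respects the distinguished mapping from $X$.

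In more detail, given an object $(S,\phi':X\to S)\in\wg$, the mapping $\phi'$ is by definition a one-to-one mapping such that $X\phi'\subseteq E(S)$, and $S$ itself is a regular subsemigroup of $S$ weakly generated by $X\phi'$. So the previous corollary applies with the roles of both the ambient semigroup and the regular subsemigroup $T$ played by $S$. This yields a semigroup homomorphism $\varphi:\ftx\to S$ that extends $\phi'$ and whose image equals $S$; in particular $\varphi$ is surjective.

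To conclude that $\varphi$ is a morphism in $\wg$ rather than a mere semigroup homomorphism, recall that in $\wg$ we identify $X$ with its image $X\phi$ inside $\ftx$ (where $\phi:X\to\ftx$ is the mapping of Corollary \ref{weaklywg}); a morphism from $(\ftx,\phi)$ to $(S,\phi')$ is a semigroup homomorphism $\varphi$ with $\phi\varphi=\phi'$. Since $\varphi$ extends $\phi'$ exactly in the sense that $[x]\varphi=x\phi'$ for every $x\in X$, and since $\phi$ sends each $x\in X$ to $[x]$, this identity $\phi\varphi=\phi'$ holds. Thus $\varphi:\ftx\to S$ is a surjective morphism in $\wg$, exhibiting $S$ as a morphic image of $\ftx$.

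I do not expect any real obstacle here: the whole content of the proposition has been packaged into the preceding corollary and Corollary \ref{weaklywg}, and the only subtle point is the terminological distinction, carefully drawn in Section 2, between \emph{homomorphism} and \emph{morphism}. The proof is therefore essentially one line invoking the previous corollary with $T=S$, followed by the remark that the homomorphism obtained is $X$-preserving and hence a morphism in $\wg$.
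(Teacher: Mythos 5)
Your proposal is correct and matches the paper exactly: the paper derives Proposition \ref{fiwig} as an immediate consequence of the preceding corollary applied with $T=S$, together with Corollary \ref{weaklywg} for the membership $\ftx\in\wg$. Your additional remark that the homomorphism extending $\phi'$ is precisely what makes it a morphism in the category $\wg$ is the right (and only) bookkeeping point.
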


It is natural to ask now if the converse of the last proposition holds also. In other words,
\begin{quote}
if $\varphi:\ftx\to S$ is a homomorphism whose restriction to $[X]$ is a one-to-one mapping such that $[X]\varphi\subseteq E(S)$, then $((\ftx)\varphi,\varphi_{|X})\in\wg$?
\end{quote}
Unfortunately, the answer to this question is negative for any nontrivial set $X$. In the next example we construct a semigroup $T_1$ that shows the answer is negative for $|X|=2$. The construction process can be easily adapted for any $X$ with more than two elements. 
\vspace*{.3cm}

\noindent {\bf Example 1}: Let $X=\{e,f\}$ and recall the sets $\G_1(X)$, $\G_2(X)$ and $\G_3(X)$ from page \pageref{g123}. Consider now the two sets
$$R=[X]\cup\Dcc_{[e_1]}\cup\Dcc_{[f_1]}\cup\Dcc_{[h]}\quad\mbox{ and }\quad I=\ftt\setminus R\,.$$
Since $[u]=[\be(u)]\Dc [\ka(\be(u))]$, then  $[u]\in I$ \iff\ $\ka(\be(u))=g$ for some $g\in\Gx$ such that $\up(g)\geq 3$ and $g\neq h$. Thus $\ep(\be(u))=\ep(g)$, and so $[u]\in I$ \iff\ $\ep(\be(u))\cap\{h_1,h_2,h_3\}\neq\emptyset$ (note that $\up(g)\geq 3$ with $g\neq h$ \iff\ $\ep(g)\cap\{h_1,h_2,h_3\}\neq\emptyset$). We can now conclude that $I$ is an ideal of $\ftt$ since $\ep(v)\cup\ep(w)\subseteq \ep(vw)$.

Consider the quotient semigroup $\ftt/I$. Then $\ftt/I$ is obtained from $\ftt$ by setting $[h_1]\approx [h_2]\approx [h_3]\approx 0$, where $0$ represents a zero element for the new semigroup. In fact, we can consider instead $[f_1e_1]\approx [e_1f_1]\approx [fef]\approx 0$ since one can check that $[f_1e_1]\in\Dcc_{[h_1]}$, $[e_1f_1]\in\Dcc_{[h_2]}$ and $[fef]\in\Dcc_{[h_3]}$. Therefore, $\ftt/I$ is isomorphic to the semigroup $T$ generated by five idempotents $\{e,f,e_1,f_1,h\}$ subject to the following relations:
$$e_1\in S(f,e),\; f_1\in S(e,f),\; h\in S(f_1f,fe_1)\;\mbox{ and }\; f_1e_1= e_1f_1= fef= 0.$$
We present the ``Egg-box'' diagram of the $\Dc$-classes of $T$ in Figure \ref{figT} (we can compute it either manually or with the GAP software \cite{gap,mitchell}; as usual, the symbol $^*$ identifies the idempotents).

\begin{figure}[ht]
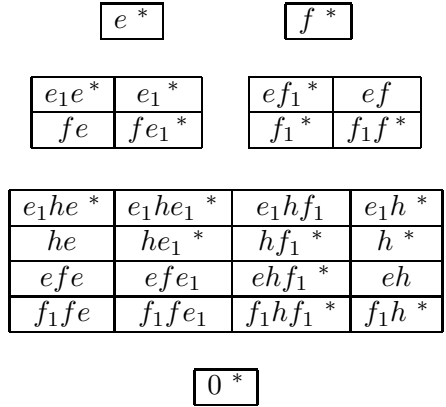

\begin{tabular}{c}
	\begin{tabular}{|c|}
	\hline	$e\ ^*$\\ \hline 
	\end{tabular}\hspace*{1.5cm}
	\begin{tabular}{|c|}
	\hline	$f\ ^*$\\ \hline
\end{tabular} \\ [.5cm]	
\begin{tabular}{|c|c|}
	\hline $e_1e\, ^*$ & $e_1\, ^*$ \\
	\hline $fe$ & $fe_1\, ^*$ \\ \hline
\end{tabular}\hspace*{.5cm}
\begin{tabular}{|c|c|}
	\hline $ef_1\, ^*$ & $ef$ \\
	\hline $f_1\, ^*$ & $f_1f\, ^*$ \\ \hline
\end{tabular} \\ [.8cm]	
\begin{tabular}{|c|c|c|c|}
	\hline	$e_1he\ ^*$ & $e_1he_1\ ^*$ & $e_1hf_1$ & $e_1h\ ^*$ \\ 
	\hline	$he$ & $he_1\ ^*$ & $hf_1\ ^*$ & $h\ ^*$ \\ 
	\hline 	$efe$ & $efe_1$ & $ehf_1\ ^*$ & $eh$ \\
	\hline $f_1fe$ & $f_1fe_1$ & $f_1hf_1\ ^*$ & $f_1h\ ^*$ \\ \hline
	\end{tabular}\\ [1.2cm]
\begin{tabular}{|c|}
	\hline	$0\ ^*$\\ \hline
\end{tabular}
\end{tabular}
	\caption{``Egg-box'' diagram of the $\Dc$-classes of $T$.}\label{figT}
\end{figure}

Let $T_1=T/\theta$ where $\theta$ is the congruence on $T$ generated by 
$$\{(fe,he),(ef,eh)\}\,.$$ 
The ``Egg-box'' diagram of the $\Dc$-classes of $T_1$ is depicted in Figure \ref{figT1} (left diagram). Basically, $T_1$ is obtained from $T$ by ``merging'' $\Dcc_{e_1}$ and $\Dcc_{f_1}$ into $\Dcc_h$. Then $T_1$ is a homomorphic image of $\ftt$ under a homomorphism whose restriction to $X$ is the identity mapping. However, $T_1$ is not weakly generated by $X$. The subsemigroup $T_2$ of $T_1$ generated by $\{e,f,e_1hf_1\}$ is a proper regular subsemigroup containing $X$. In fact, $T_2$ is obtained from $T_1$ by deleting $\Lcc_{e_1}$ and $\Rcc_{f_1}$. See the ``Egg-box'' diagram representation for the $\Dc$-classes of $T_2$ in Figure \ref{figT1} (right diagram).
\begin{figure}[ht]
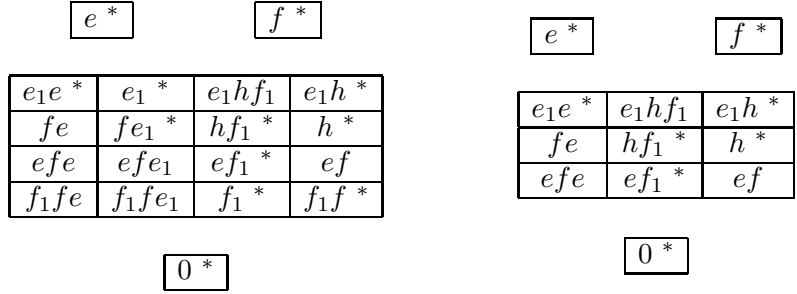

\begin{subfigure}[b]{.42\linewidth}
\begin{tabular}{c}
	\begin{tabular}{|c|}
	\hline	$e\ ^*$\\ \hline 
	\end{tabular}\hspace*{1.5cm}
	\begin{tabular}{|c|}
	\hline	$f\ ^*$\\ \hline
 	\end{tabular} \\ [.5cm]			
	\begin{tabular}{|c|c|c|c|}
	\hline	$e_1e\ ^*$ & $e_1\ ^*$ & $e_1hf_1$ & $e_1h\ ^*$ \\ 
	\hline	$fe$ & $fe_1\ ^*$ & $hf_1\ ^*$ & $h\ ^*$ \\ 
	\hline 	$efe$ & $efe_1$ & $ef_1\ ^*$ & $ef$ \\
	\hline $f_1fe$ & $f_1fe_1$ & $f_1\ ^*$ & $f_1f\ ^*$ \\ \hline
	\end{tabular}\\ [1.2cm]
	\begin{tabular}{|c|}
	\hline	$0\ ^*$\\ \hline
	\end{tabular}
\end{tabular}
\end{subfigure}	\hspace*{1.2cm}
\begin{subfigure}[b]{.32\linewidth}
\begin{tabular}{c}
	\begin{tabular}{|c|}
	\hline	$e\ ^*$\\ \hline 
	\end{tabular}\hspace*{1.5cm}
	\begin{tabular}{|c|}
	\hline	$f\ ^*$\\ \hline
\end{tabular} \\ [.5cm]	
\begin{tabular}{|c|c|c|}
	\hline	$e_1e\ ^*$ & $e_1hf_1$ & $e_1h\ ^*$ \\ 
	\hline	$fe$ & $hf_1\ ^*$ & $h\ ^*$ \\ 
	\hline 	$efe$ & $ef_1\ ^*$ & $ef$ \\ \hline
	\end{tabular}\\ [1cm]
\begin{tabular}{|c|}
	\hline	$0\ ^*$\\ \hline
\end{tabular}
\end{tabular}
\end{subfigure}
	\caption{``Egg-box'' diagrams of the $\Dc$-classes of $T_1$ (left) and $T_2$ (right).}\label{figT1}
\end{figure}
\hfill\qed\vspace*{.3cm}

The semigroup $T_1$ of the previous example also show us that being generated by a skeleton, induced by a one-to-one mapping $\phi':X\to E(S)$, is not sufficient for a semigroup $S$ to be weakly generated by $X$. The next result tells us that if $\phi'$ induces a unique skeleton $A$ and $A$ generates $S$, then $S$ is weakly generated by $X$. In particular, if $\phi'$ induces a unique skeleton mapping and its image generates $S$, then $S$ is weakly generated by $X$. Note that having a unique skeleton is a weaker condition than having a unique skeleton mapping: in the Example 2 below there are four possible skeleton mappings and we can easily find two of them giving rise to the same skeleton. However, the Example 2 is constructed with a different purpose: to show that there are regular semigroups $S$ weakly generated by $X\phi'$, for some one-to-one mapping $\phi':X\to E(S)$, with more than one skeleton induced by $\phi'$. In other words, having a unique skeleton induced by $\phi':X\to E(S)$, and obviously being generated by that skeleton, is not a necessary condition for being weakly generated by the set of idempotents $X\phi'$. 

\begin{prop}
Let $S$ be a semigroup and $\phi':X\to E(S)$ be a one-to-one mapping. If $\phi'$ induces a unique skeleton $A$, then $S$ has the smallest regular subsemigroup containing $X\phi'$, namely the subsemigroup $\langle A\rangle$ generated by $A$. In particular, if $\langle A\rangle=S$ also, then $(S,\phi')\in\wg$.
\end{prop}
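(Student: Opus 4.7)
The plan is to reduce everything to the uniqueness hypothesis. Corollary \ref{r53} already tells us that $\langle A\rangle$ is a regular subsemigroup of $S$, and it obviously contains $X\phi'=X\phi\subseteq A$. The nontrivial content is thus to show that every regular subsemigroup $T$ of $S$ that contains $X\phi'$ must contain $A$. My strategy is to construct an alternative skeleton mapping $\phi^*:\Gx\to E(S^1)$ extending $\phi'$ whose image, apart from $1\phi^*$, lies inside $T$; the uniqueness of the skeleton induced by $\phi'$ will then immediately force $A=(\Gx\setminus\{1\})\phi^*\subseteq T$, and hence $\langle A\rangle\subseteq T$.

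To build $\phi^*$, I would mirror the inductive construction in Lemma \ref{r51}, but make every sandwich choice inside $T$. Concretely, I would set $1\phi^*=1\phi$ and $x\phi^*=x\phi'$ for each $x\in X$, and then proceed by induction on $\up(g)$. At level $i\geq 2$, given $g\in\Gix$, the products $e_1=(g^r\phi^*)(g^c\phi^*)$ and $e_2=(g^c\phi^*)(g^l\phi^*)$ already lie in $T$ and are idempotents: either directly from the induction hypothesis when $i\geq 3$, or after absorbing $1\phi^*$ as a two-sided identity when $i=2$ and $g^c=1$, in which case $e_1=g^r\phi^*$ and $e_2=g^l\phi^*$. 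Because $T$ is regular, the sandwich set $S_T(e_1,e_2)$ computed inside $T$ is nonempty, and I would choose any of its elements as $g\phi^*$.

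The last step is to invoke the hypothesis: the image $(\Gx\setminus\{1\})\phi^*\subseteq T$ is a skeleton induced by $\phi'$, so by uniqueness it must coincide with $A$, yielding $A\subseteq T$. The ``in particular'' clause is then immediate, since $\langle A\rangle=S$ says that $S$ itself has no proper regular subsemigroup containing $X\phi'$, which is exactly the definition of $(S,\phi')\in\wg$. The technical point I expect to be the main obstacle is checking that the chosen $h\in S_T(e_1,e_2)$ still satisfies condition $(iii)$ of a skeleton mapping when read in $S^1$, i.e.\ the inclusion $S_T(e_1,e_2)\subseteq S(e_1,e_2)$. This should follow from the routine facts that inverses of $e_1e_2$ taken inside $T$ are also inverses in $S$, and that idempotents of $e_2Te_1$ are idempotents of $e_2S^1e_1$; once that is nailed down, the construction goes through and the proposition follows.
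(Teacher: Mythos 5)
Your proposal is correct and follows essentially the same route as the paper: the paper simply applies Lemma \ref{r51} to the regular semigroup $T$ itself to obtain a skeleton mapping $\phi:\Gx\to T^1$ extending $\phi'$, observes that its image is also a skeleton of $S$ induced by $\phi'$ (the inclusion $S_T(e_1,e_2)\subseteq S(e_1,e_2)$ you flag is exactly the routine fact implicitly used there), and concludes $A\subseteq T$ by uniqueness. Your version merely unfolds the inductive construction of Lemma \ref{r51} inside $T$ explicitly, which is the same argument.
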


\begin{proof}
Note that $\langle A\rangle$ is a regular subsemigroup of $S$ containing $X\phi'$ by Corollary \ref{r53}. Let $T$ be another regular subsemigroup of $S$ containing $X\phi'$. Since $T$ is regular, there exists a skeleton mapping $\phi:\Gx\to T^1$ extending $\phi':X\to E(T)$ by Lemma \ref{r51}. 
Thus $(\Gx\setminus\{1\})\phi=A$ since $(\Gx\setminus\{1\})\phi$ is also a skeleton of $S$ induced by $\phi'$. Consequently $\langle A\rangle \subseteq T$. In particular, $\langle A\rangle$ is weakly generated by the idempotents $X\phi'$, and the second part of this proposition is now obvious.
\end{proof}

\noindent {\bf Example 2}: Let $R$ be the semigroup generated by the set $\{e,f,g,g_1\}$ of four idempotents subject to the relations
$$g\in S(f,e),\;\; g_1\in S(e,f),\;\; efe=geg_1,\;\; fef=g_1ge\;\;\mbox{ and }\;\;efg=gg_1f\,.$$
The ``Egg-box'' diagram representation of the $\Dc$-classes of $R$ is depicted in Figure \ref{figR}. Clearly, $R$ is weakly idempotent generated by $X=\{e,f\}$.
\begin{figure}[ht]
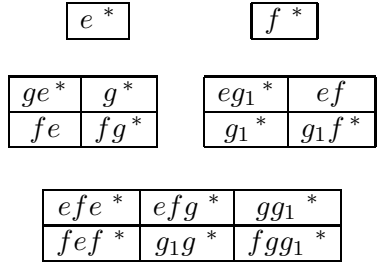

\begin{tabular}{c}
	\begin{tabular}{|c|}
	\hline	$e\ ^*$\\ \hline 
	\end{tabular}\hspace*{1.5cm}
	\begin{tabular}{|c|}
	\hline	$f\ ^*$\\ \hline
\end{tabular} \\ [.5cm]	
\begin{tabular}{|c|c|}
	\hline $ge\, ^*$ & $g\, ^*$ \\
	\hline $fe$ & $fg\, ^*$ \\ \hline
\end{tabular}\hspace*{.5cm}
\begin{tabular}{|c|c|}
	\hline $eg_1\, ^*$ & $ef$ \\
	\hline $g_1\, ^*$ & $g_1f\, ^*$ \\ \hline
\end{tabular} \\ [.8cm]	
\begin{tabular}{|c|c|c|}
	\hline	$efe\ ^*$ & $efg\ ^*$ & $gg_1\ ^*$\\ 
	\hline $fef\ ^*$ & $g_1g\ ^*$ & $fgg_1\ ^*$\\ \hline
\end{tabular}
\end{tabular}
	\caption{``Egg-box'' diagram of the $\Dc$-classes of $R$.}\label{figR}
\end{figure}

Let $\phi:\Gx\to R^1$ be a skeleton mapping extending the identity mapping $\phi':X\to E(R)$, and recall the notations $\G_2(X)=\{e_1,f_1\}$ and $\G_3(X)=\{h,h_1,h_2,h_3\}$. Note that $e_1\phi=g$ and $f_1\phi=g_1$ since $S(f,e)=\{g\}$ and $S(e,f)=\{g_1\}$, respectively, in $R$. Further, $h_2\phi=g_1g$ and $h_3\phi=efe$ because $S(gf,fg_1)=\{g_1g\}$ and $S(ge,eg_1)=\{efe\}$, respectively. However, we have two possible choices for both $h\phi$ and $h_1\phi$:
$$h\phi\in S(g_1f,fg)=\{g_1g,fef\}\;\;\mbox{ and }\;\;h_1\phi\in S(g_1e,eg)=\{efe,gg_1\}\,.$$

If we choose $h\phi=g_1g$ and $h_1\phi=efe$, then $(\G_2(X))\phi=\{efe,g_1g\}$ and it is trivial to check that $(\G_3(X))\phi=\{fef,efg,fgg_1\}$. Then, by induction on $i\geq 2$, we can easily prove that
$$(\Gix)\phi=\left\{\begin{array}{ll}\;\{efe,g_1g\} & \mbox{ if } i \mbox{ even} \\ [.2cm]
\;\{fef,efg,fgg_1\} & \mbox{ if } i \mbox{ odd,}
\end{array}\right.$$
since $\Dcc_{efe}$ is a rectangular band. Therefore, 
$$A=\{e,f,g,g_1,efe,efg,fef,g_1g,fgf_1\}$$ 
is a skeleton of $R$. But now, if we choose $h\phi=fef$ and $h_1\phi=gg_1$ instead, we get another skeleton: $A\cup\{gg_1\}$. Hence, $R$ is weakly generated by $X=\{e,f\}$ but has more than one skeleton.\hfill\qed\vspace*{.3cm}

From the previous results and examples, some questions arise naturally. When does an image of $\ftx$, under a homomorphism whose restriction to $[X]$ is an idempotent one-to-one mapping, belong to $\wg$? Is this question decidable? Is there any necessary and sufficient condition for a semigroup to be weakly idempotent generated by $X$? Is it decidable if a semigroup is weakly idempotent generated by $X$, even if we restrict ourselves to finite semigroups? If $S$ is a finitely idempotent generated semigroup, can we find (good bounds for) the size of a minimal set $X$ that weakly idempotent generates $S$? These are just some examples of questions that now can be posed and are still open.

\section{weakly finitely idempotent generated semigroups}

Let $m,n\geq 2$. In this section we prove, somehow surprising, that $\mft$ can be embedded into $\ftn$ even if $m>n$. Note that $\mft$ is obviously a subsemigroup of $\ftn$ if $m\leq n$. In fact, we shall prove that the monoid $\mft^1$ is isomorphic to a subsemigroup of $\ftn$. In particular, all monoids $\mft^1$ and all semigroups $\mft$ are isomorphic to subsemigroups of $\ftt$. As a consequence, we show that all regular semigroups weakly generated by a finite set of idempotents (which includes all finitely idempotent generated regular semigroups) strongly divide $\ftt$, that is, they are homomorphic images of regular subsemigroups of $\ftt$. 

For each $g\in \Gix$ with $i\geq 1$, let 
$$\G(g)=\{g_1\in \Gx\,:\;g_1^l=g\mbox{ or } g_1^r=g\}\,.$$
Note that $\G(g)$ is a subset of $\G_{i+1}(X)$. We begin by computing the cardinality of each sets $\G(g)$ and use this computation to calculate, recursively, the cardinality of each $\Gix$.

\begin{lem}\label{sizeG}
Let $X$ be a set of cardinality $n\geq 2$ and let $g\in \Gix$ for some $i\geq 1$. Then  $|\G(g)|=\frac{2^{2i-1}(3n-5)+4}{3}$ and $|\G_{i+1}(X)|=\frac{4^{i-1}(3n-5)+2}{3}\,|\Gix|$.
\end{lem}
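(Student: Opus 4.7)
The plan is to prove by induction on $i$ that $|\G(g)|$ depends only on the height $\up(g)=i$, giving a single value $a_i$; once this reduces the first formula to a linear recurrence, the second formula will follow from a partitioning argument.

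First I would exploit the left-right symmetry of $\G_{i+1}(X)$. Setting $A_g=\{g_1\in\Gx:g_1^l=g\}$ and $B_g=\{g_1\in\Gx:g_1^r=g\}$, these sets are disjoint because $g_1^l\neq g_1^r$ for every $g_1\in\G_{i+1}(X)$, and the swap $(g_1^l,g_1^c,g_1^r)\mapsto(g_1^r,g_1^c,g_1^l)$ is a bijection $A_g\to B_g$. Hence $|\G(g)|=2|A_g|$, and everything reduces to counting $A_g$.

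Next, for $i\geq 2$, I would count $|A_g|$ as follows. The constraint $g_1^c\in\{(g_1^l)^l,(g_1^l)^r\}=\{g^l,g^r\}$ offers exactly two choices (since $g^l\neq g^r$), and once $g_1^c$ is fixed, the remaining requirements on $g_1^r$ amount to $g_1^r\in\G(g_1^c)\setminus\{g\}$. Because $g^l$ and $g^r$ are entries of $g$, the element $g$ lies in both $\G(g^l)$ and $\G(g^r)$, so each of the two choices contributes $|\G(g_1^c)|-1$. The inductive hypothesis $|\G(g^l)|=|\G(g^r)|=a_{i-1}$ then yields the recurrence $a_i=4a_{i-1}-4$. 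The base case $i=1$ I would do directly: for $g\in X$, the elements of $\G_2(X)$ containing $g$ as an entry are exactly the $2(n-1)$ triples $(g,1,y)$ and $(y,1,g)$ with $y\in X\setminus\{g\}$, so $a_1=2n-2$. Solving the recurrence (via the substitution $a_i=b_i+4/3$, which reduces to $b_i=4b_{i-1}$ with $b_1=(6n-10)/3$) produces the stated closed form $a_i=\tfrac{2^{2i-1}(3n-5)+4}{3}$.

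For the second formula I would partition $\G_{i+1}(X)=\bigsqcup_{g\in\G_i(X)}A_g$ according to the left entry; this immediately gives $|\G_{i+1}(X)|=|\G_i(X)|\cdot(a_i/2)$, and rewriting $a_i/2=\tfrac{4^{i-1}(3n-5)+2}{3}$ yields the desired expression. The main subtlety is justifying the ``$-1$'' corrections in the recurrence: one has to verify carefully that $g$ lies in both $\G(g^l)$ and $\G(g^r)$, so that it is excluded exactly once in each branch (never twice, never zero times), which is what makes $|\G(g)|$ depend only on $i$ and not on the particular $g$. Once that is done, the rest is routine algebra.
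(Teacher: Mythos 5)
Your proof is correct and follows essentially the same route as the paper: induction on $i$ via the recurrence $a_i=4a_{i-1}-4$ with base case $a_1=2n-2$, where the $-1$ corrections come from excluding $g$ itself from $\G(g^l)$ and $\G(g^r)$, exactly as in the paper's count of its sets $\G(g,l)$ and $\G(g,r)$. The only cosmetic difference is in the second formula, where you partition $\G_{i+1}(X)$ by the left entry while the paper double-counts (each element lying in exactly two sets $\G(g)$); both are immediate and equivalent.
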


\begin{proof}
We use induction on $i\geq 1$ to prove the first part of this lemma. If $i=1$, then $g\in X$ and 
$$\G(g)=\{(g,1,g_1),\,(g_1,1,g)\,|\;g_1\in X\setminus\{g\}\}\,;$$
whence $|\G(g)|=2n-2$, which is the desired value for $i=1$. 

Let now $i\geq 2$ and $g\in\G_i(X)$, and assume that 
$$|\G(g_1)|=(2^{2j-1}(3n-5)+4)/3$$
for all $g_1\in\G_j(X)$ with $1\leq j<i$. Consider the following two sets:
$$\G(g,l)=\{g_1\in\Gx\,|\; g\in\{g_1^l,g_1^r\}\mbox{ and }g_1^c=g^l\}$$
and 
$$\G(g,r)=\{g_1\in\Gx\,|\; g\in\{g_1^l,g_1^r\}\mbox{ and }g_1^c=g^r\}\,.$$
Then $\G(g)$ is the disjoint union of $\G(g,l)$ and $\G(g,r)$ (note that this conclusion is only true since $i\geq 2$; it fails for $i=1$ as $g^l=1=g^r$ in that case). Further, $|\G(g,l)|=2(|\G(g^l)|-1)$ and $|\G(g,r)|=2(|\G(g^r)|-1)$.  Thus
$$|\G(g)|=4(|\G(g^l)|-1)=4\left(\frac{2^{2i-3}(3n-5)+1}{3}\right)=\frac{2^{2i-1}(3n-5)+4}{3}\,.$$
We have shown the first equality.

For the second equality, begin by observing that $|\G_2(X)|=n(n-1)$ since the elements of $\G_2(X)$ are the triples $(g,1,g_1)$ with $g$ and $g_1$ distinct elements of $X$. Thus, the equality given is verified for $i=1$. Let now $i\geq 2$. Note that each element of $\G_{i+1}(X)$ belongs to precisely two sets of the form $\G(g)$, with $g\in \Gix$. Thus,
$$|G_{i+1}(X)|= \frac{1}{2}|\G(g)|\, |\Gix| = \frac{4^{i-1}(3n-5)+2}{3}\,|\Gix|
$$
for any $i\geq 2$.
\end{proof}

Let $g\in\Gix$ for some $i\geq 1$ and let $A$ be a subset of $k$ distinct elements of $\G(g)$. In what follows, we prove a sequence of results that culminates with Proposition \ref{r61}, where we conclude that $\ftx$ has a subsemigroup isomorphic to $\ft_k^1$ and with identity element $[g]$. So, up to Proposition \ref{r61}, $g$ will be a fixed element of $\Gix$ and $A$ will be a fixed subset $\{g_1,\cdots,g_k\}$ of $k$ elements of $\G(g)$. Then, no two elements of $[A]$ are $\Dc$-related. 

Set $A_0=\{g\}$, $A_1=A$, and $A_2=\{h\in\G_{i+2}(X)\,|\;h^l,h^r\in A_1\mbox{ and }h^c=g\}$. Set also
$$A_j=\{h\in \G_{i+j}(X)\,|\; h^l,h^r\in A_{j-1}\}$$
recursively for $j\geq 3$. Note that, if $h\in A_j$ with $j\geq 3$, then $h^l\in A_{j-1}$ and $h^c\in\{h^{l^2},h^{lr}\}\subseteq A_{j-2}$. Let $\ol{A}=\cup_{j\geq 0} A_j$ and 
$$\La_A(X)=\{u\in\lx\,|\;\si(u)=g=\tau(u)\mbox{ and }\c(u)\subseteq \ol{A}\}\,.$$

\begin{lem}
If $u\in\La_A(X)$ and $u\to v$, then $v$ and $\be_2(u)$ belong to $\La_A(X)$.
\end{lem}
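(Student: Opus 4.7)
My plan is to analyze the uplifting $u\to v$ case by case, leveraging one key structural observation about letters of any $u\in\La_A(X)$. Setting $N=\up(g)$, I note that the defining sets live at pairwise distinct heights, namely $A_j\subseteq\G_{N+j}(X)$; consequently, a letter of $u$ belonging to $\ol A$ is placed in a unique $A_j$ by its height, and in particular, because $A_0=\{g\}$, every letter of $u$ of height $N$ must equal $g$ itself.

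Now fix a river $g_i$ of $u$, and let $j\geq 1$ be the integer with $g_i\in A_{j-1}$; then both $g_{i-1}$ and $g_{i+1}$ belong to $A_j$. If $g_{i-1}=g_{i+1}$, then $v$ is obtained from $u$ by deleting two letters, so $\c(v)\subseteq\c(u)\subseteq\ol A$; and the borderline sub-cases $i=1$ or $i+1=n$ force the surviving endpoint of $v$ to equal $g$, since $g_0=g=g_n$ in $u$. This immediately gives $v\in\La_A(X)$.

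The substantive case is $g_{i-1}\neq g_{i+1}$, in which $v$ differs from $u$ only by inserting the new letter $g_i'=(g_{i+1},g_i,g_{i-1})\in\G_{N+j+1}(X)$, so the whole task reduces to showing $g_i'\in A_{j+1}$. For $j\geq 2$ this is immediate from the recursive clause $A_{j+1}=\{h\in\G_{N+j+1}(X)\,|\;h^l,h^r\in A_j\}$, since $g_i'{}^l=g_{i+1}\in A_j$ and $g_i'{}^r=g_{i-1}\in A_j$. For $j=1$ the definition of $A_2$ carries the additional requirement $h^c=g$, and this is precisely where the height observation is needed: $g_i\in A_0$ forces $g_i=g$, hence $g_i'{}^c=g$, placing $g_i'$ in $A_2$.

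The second assertion, $\be_2(u)\in\La_A(X)$, will then follow by induction on the length of any rewriting $u\to\cdots\to\be_2(u)$, applying the first assertion at each step. I expect the $j=1$ sub-case just discussed to be the main technical point: the extra condition $h^c=g$ in the definition of $A_2$ is the only non-automatic constraint, and what rescues it is precisely the forcing $g_i=g$ coming from $A_0=\{g\}$; every other part of the argument is routine bookkeeping with heights and the recursive definition of the $A_j$.
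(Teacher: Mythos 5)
Your proposal is correct and takes essentially the same route as the paper's proof: the same deletion/insertion case split on the uplifted river, with the whole matter reducing to showing the new letter $g_i'$ lands in $A_{j+1}$, and the only delicate sub-case being the extra condition $h^c=g$ in the definition of $A_2$, which is rescued exactly as you say by $A_0=\{g\}$ forcing $g_i=g$. The paper phrases the sub-cases as $h_j=g$ versus $h_j\neq g$ and dismisses the borderline endpoint positions outright (a river cannot be adjacent to an endpoint since all letters of $u$ have height at least $\up(g)$), but these are only presentational differences.
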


\begin{proof}
Let $u=gh_1\cdots h_ng\in\La_A(X)$ and assume that $u\xr{h_j} v$ for a river $h_j$ of $u$. Then $1<j<n$ since $c(u)\subseteq \ol{A}$. If $h_{j-1}=h_{j+1}$, then $v=gh_1\cdots h_{j-1}h_{j+2}\cdots h_ng$ and, obviously, $v\in\La_A(X)$. So, assume that $h_{j-1}\neq h_{j+1}$. Then $v=gh_1\cdots h_{j-1}h_j'h_{j+1}\cdots h_ng$ for $h_j'=(h_{j+1},h_j,h_{j-1})$. If $h_j=g$, then $h_{j-1},h_{j+1}\in A_1$ and $h_j'\in A_2$; whence $v\in\La_A(X)$. If $h_j\neq g$, then $h_{j-1},h_{j+1}\in A_{j'}$ for some $j'\geq 2$, and so $h_j'\in A_{j'+1}$; whence $v\in\La_A(X)$ also. We have proved that $v\in\La_A(X)$. Now, $\be_2(u)$ also belongs to $\La_A(X)$ since $u\xr{*}\be_2(u)$.
\end{proof}

Let $u\in\La_A(X)$. By Lemma \ref{valelem}.$(iii)$, 
$$u\ap\la_l(g)*u*\la_r(g)\ap\la_l(g)*\be_2(u)*\la_r(g)\in\mxm\,,$$ 
and $\la_l(g)*\be_2(u)*\la_r(g)$ is the canonical form of $u$. Hence, $u\ap v$ for another $v\in\La_A(X)$ \iff\ $\be_2(u)=\be_2(v)$. Consequently, $[u]$ has a unique representative in the set $\La_A(X)\cap(\ltp\cup\{g\})$, namely $\be_2(u)$. Let $T$ be the subset 
$$T=\{[u]\in\ftx\,|\;u\in\La_A(X)\}$$ 
of $\ftx$. Let also $B_j=A_j$ if $j$ even, or let $B_j=\{ghg\,|\;h\in A_j\}$ otherwise; and set $\ol{B}=\cup_{j\geq 0} B_j$

\begin{prop}\label{p63}
$T$ is a regular subsemigroup of $\ftx$, it is generated by the set of idempotents $[\ol{B}]$, and $[g]$ is an identity element for $T$.
\end{prop}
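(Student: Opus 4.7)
The identity property of $[g]$ is immediate from $\rho_e$: for $u\in\La_A(X)$ one has $\si(u)=g=\tau(u)$, so $[g][u]=[gu]\ap[u]$ via $g^2\ap g$, and dually $[u][g]\ap[u]$. Closure of $T$ under multiplication follows analogously: the concatenation $u_1u_2$ carries a doubled $g$ at its junction, so $[u_1][u_2]=[u_1*u_2]$ and $u_1*u_2\in\La_A(X)$ (the join of two landscapes at a common letter is again a landscape, and the content stays in $\ol A$). Regularity follows by taking $[\cev u]$ as the inverse of $[u]$: the reverse $\cev u$ is again in $\La_A(X)$ because content is preserved and both endpoints remain $g$; since $\be_2(u)\in\La_A(X)$ has no rivers, the canonical form $\be(u)=\la_l(g)*\be_2(u)*\la_r(g)$ lies in $\mxm\subseteq\ltp\cup\{g\}$, and Proposition \ref{reg} then provides $[\cev{\be(u)}]$ as an inverse of $[u]$; the identity $\cev{\be(u)}=\la_l(g)*\cev{\be_2(u)}*\la_r(g)=\be(\cev u)$ shows this inverse is $[\cev u]\in T$.

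For the final part I would first check that every element of $[\ol B]$ is an idempotent. Idempotency of $[g]$ and of $[b]$ for $b\in A_j$ with $j$ even is immediate, since every letter in $\Gx$ satisfies $b^2\ap b$. For $[ghg]$ with $h\in A_j$ and $j$ odd: when $j=1$ one has $g\in\{h^l,h^r\}$, so Lemma \ref{prodelem}.(i) gives $hgh\ap h$ and hence $[ghg]^2=[g(hgh)g]=[ghg]$. For $j\geq3$ I would invoke Proposition \ref{idgorges}.(i), reducing idempotency to showing that the canyon $\la_r(\be(ghg))*\la_l(\be(ghg))$ is a gorge; the central part $\la_r(g)*\la_l(g)$ already reduces to $g$ by Lemma \ref{valelem}.(i), so what remains is to show that the resulting subcanyon, a valley from $h$ down to $g$ and back up to $h$ built from the portions of the two hills above $\up(g)$, reduces to $h$ under upliftings. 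This is done by tracking the chain $h,h^c,(h^c)^c,\ldots,g$ determined by the recursive structure of $\ol A$: at each level the new ridge created by uplifting a river coincides with the corresponding ancestor in this chain, so the reduction propagates up to $h$.

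To show that $[\ol B]$ generates $T$, I would induct on $\up(u)-\up(g)$. Since $g$ is the only letter of $\ol A$ of height $\up(g)$, every $u\in\La_A(X)$ decomposes along its occurrences of $g$ as $u=g\cdot w_1\cdot g\cdots g\cdot w_n\cdot g$, giving $[u]=\prod_i[gw_ig]$. If the peak of some $w_i$ lies in $A_1$, further cutting $w_i$ at its letters of height $\up(g)+1$ writes $[gw_ig]$ as a product of elements of $[B_1]$. Otherwise the peak $p$ lies in some $A_j$ with $j\geq2$, and repeated use of the identities $p\ap p^cp^lp\ap pp^rp^c$ coming from $\rho_s$ together with the inductive decomposition of lower-height pieces writes $[gw_ig]$ as a product involving $[p]\in[B_j]$ when $j$ is even, or $[gpg]\in[B_j]$ when $j$ is odd, together with factors $[gg_1g]\in[B_1]$ for the intermediate $A_1$-letters along the hills of the canonical form, all of which lie in $[\ol B]$. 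The main obstacle of the whole proof is the idempotency step in the case $j\geq3$ odd, where the gorge reduction must be propagated carefully through all the recursive levels of $\ol A$.
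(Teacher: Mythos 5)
The routine parts of your proposal --- closure of $T$ under the product, regularity via reversal of the canonical form, and $[g]$ acting as an identity --- are correct and essentially identical to the paper's argument. For the idempotency of $[ghg]$ with $h\in A_j$, $j\geq 3$ odd, your gorge-based route differs from the paper's and is viable: the canyon to be reduced is $D*\la_r(g)*\la_l(g)*U$, where $D$ and $U$ are the two $c$-chains joining $h$ to $g$, and after collapsing the middle to $g$ the remaining reduction $D*U\xr{*}h$ is exactly the induction already carried out in the proof of Lemma \ref{valelem}.$(i)$ (uplifting the river $h^{c^k}$ creates precisely $h^{c^{k-1}}=(h^{c^{k-1}l},h^{c^k},h^{c^{k-1}r})$). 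So the ``main obstacle'' you flag is not really an obstacle --- the needed computation is in the paper --- but you do leave it unproved. The paper instead argues algebraically by induction on odd $j$: $(ghg)^2\ap ghh^rh^cgh^ch^lhg\ap ghh^r(gh^cg)^2h^lhg\ap ghg$, using the inductive idempotency of $[gh^cg]$; this is shorter and avoids computing canonical forms.

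The genuine gap is in the generation step. The paper first shows, via the explicit word $u=h^{c^{j_1}}\cdots h^lhh^r\cdots h^{c^{j_1}}$ whose endpoints land in $A_0=\{g\}$ when $j$ is even, that $[h]\in T$ with $[h]\leq[g]$ for every $h$ at an even level; this yields the absorption $gh\ap h\ap hg$ for all even-level letters, which is what licenses inserting $g$ between every two consecutive letters of $gh_1\cdots h_ng$ and regrouping as $(gh_1g)h_2(gh_3g)\cdots h_{n-1}(gh_ng)$, a product of elements of $[\ol{B}]$ since levels alternate in parity along a landscape. Your proposal never establishes this absorption, and the substitute you offer --- cutting at occurrences of $g$ and then applying $p\ap p^cp^lp\ap pp^rp^c$ at the peak --- does not visibly terminate in factors from $[\ol{B}]$: it produces middle pieces such as $p^lpp^r$ (which lies in a different $\Hc$-class from $p$ and is not an element of $[\ol{B}]$, so its membership in $\langle[\ol{B}]\rangle$ is part of what is being proved), and the stated induction on $\up(u)-\up(g)$ does not decrease when passing from $u$ to a block $gw_ig$ containing the peak. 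To repair it, prove first that $[A_j]\subseteq T$ for $j$ even and $[gA_jg]\subseteq T$ for $j$ odd via the paper's word $u$, deduce $gh\ap h\ap hg$ for even-level $h$, and then the one-line regrouping finishes the generation claim.
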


\begin{proof}
Note that $u*v\in\La_A(X)$ if $u,v\in\La_A(X)$. Thus $T$ is a subsemigroup of $\ftx$. In fact, since $\cevl{\be_2(u)}\in\La_A(X)$ for any $u\in\La_A(X)$, $T$ is a regular subsemigroup by Proposition \ref{reg}. Clearly, $[g]$ is also an identity element for $T$: $g*u=u=u*g$ for any $u\in\La_A(X)$.

Let $h\in A_j$ for some $j\geq 2$ and set
$$u=h^{c^{j_1}}h^{c^{j_1-1}l}h^{c^{j_1-1}}\cdots h^ch^lhh^rh^c\cdots h^{c^{j_1-1}}h^{c^{j_1-1}r}h^{c^{j_1}}\,,$$
where $j_1$ is such that $j\in\{2j_1,2j_1+1\}$. Note that $h^l,h^r\in A_{j-1}$, $h^c\in A_{j-2}$, and $h^ch^lh\ap h\ap hh^rh^c$. Thus, recursively, we conclude that $h\ap u$, $\c(u)\subseteq\ol{A}$, and either $h^{c^{j_1}}=g$ if $j=2j_1$, or $h^{c^{j_1}}\in A_1$ if $j=2j_1+1$. If $j$ is even, then $u\in\La_A(X)$; whence $[h]=[u]\in T$. If $j$ is odd, then $ghg\ap gug\in\La_A(X)$; whence $[ghg]=[gug]\in T$. We can now conclude that $[\ol{B}]$ is a set of elements from $T$. Further, $[h]\leq [g]$ if $j$ is even. Thus, if $h_1h_2$ is a hill with $h_1,h_2\in\ol{A}$, then $h_1h_2\ap h_1gh_2$ (note that either $h_1$ or $h_2$ belong to some $A_j$ with $j$ even). Hence, if $gh_1\cdots h_ng\in\La_A(X)$, then
$$gh_1\cdots h_ng\ap (gh_1g)h_2(gh_3g)\cdots h_{n-1}(gh_ng)\in\langle[\ol{B} ]\rangle\,.$$
We have shown that $[\ol{B}]$ generates $T$.

Finally, let us prove that all elements from $[\ol{B}]$ are idempotents. Clearly, the elements from $[B_j]$ are idempotents if $j$ even or $j=1$. We prove that all other elements from $[\ol{B}]$ are also idempotents by induction on $j$ odd. Let $ghg\in B_j$ for some odd number $j$ greater than 1 and assume that all elements from $[B_{j-2}]$ are idempotents. In particular, we are assuming that $[gh^cg]$ is an idempotent. Hence,
$$(ghg)^2\ap ghh^rh^cgh^ch^lhg\ap ghh^r(gh^cg)^2h^lhg\ap ghh^rgh^cgh^lhg\ap ghg\,.$$
Thus, all elements from $[\ol{B}]$ are idempotents. 
\end{proof}

\begin{lem}\label{sandB}
Let $h\in A_j$ for some $j\geq 2$. Then 
$$S([(gh^rg)h^c],[h^c(gh^lg)])=\{[h]\}\;\mbox{ and }\;S([h^r(gh^cg)],[(gh^cg)h^l])=\{[ghg]\}\,.$$
\end{lem}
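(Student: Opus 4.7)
My plan is to prove each identity by establishing $\Lc$- and $\Rc$-equivalences that reduce the given sandwich sets to ones whose value is known, then handling any remaining cases by direct canonical-form analysis.

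For the first identity, I would show that $[(gh^rg)h^c]$ is $\Lc$-equivalent to $[h^rh^c]$, and $[h^c(gh^lg)]$ is $\Rc$-equivalent to $[h^ch^l]$. Although the inserted $g$'s may change the idempotents as elements, the canonical mountain $\be(\cdot)$ retains the same $\la_r$ (resp.\ $\la_l$) on the relevant side: the river-upliftings in the computation of $\be_2$ absorb the $g$-insertions into the surrounding hill structure, as one verifies via Lemma~\ref{valelem}(iii) together with the merging rules for $*$. Since sandwich sets depend only on the $\Lc$-class of the first argument and the $\Rc$-class of the second, this yields
\[ S([(gh^rg)h^c],[h^c(gh^lg)]) = S([h^rh^c],[h^ch^l]) = \{[h]\} \]
by Lemma~\ref{sandwich}.

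For the second identity, the proof splits by the parity of $j$. When $j$ is even, $h^c\in A_{j-2}$ with $j-2$ even gives $[h^c]\leq[g]$ by the argument in the proof of Proposition~\ref{p63}, hence $[gh^cg]=[h^c]$, and the sandwich set coincides with the first, equaling $\{[h]\}$; moreover $[h]\leq[g]$ for $j$ even forces $[ghg]=[h]$, producing $\{[ghg]\}$. When $j$ is odd, $h^l,h^r\in A_{j-1}$ are both idempotents below $[g]$, and the absorption pattern can be pushed further to establish literal equalities $[h^r(gh^cg)]=[h^r]$ and $[(gh^cg)h^l]=[h^l]$ via canonical-form computation (using $[h^rg]=[h^r]$, then reducing $[h^rh^cg]$ to $[h^r]$ by a sequence of river-upliftings). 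The sandwich set becomes $S([h^r],[h^l])$, which I would show equals $\{[ghg]\}$ in two steps. \emph{Existence}: $[ghg]$ is an idempotent satisfying the three sandwich conditions ($[h^l][ghg]=[ghg]=[ghg][h^r]$ and $[h^r][ghg][h^l]=[h^rh^l]$), all derivable using $\rho_s$ — especially the relation $h^rh^chh^ch^l\approx h^rh^ch^l$ — together with $[h^l],[h^r]\leq[g]$. \emph{Uniqueness}: any sandwich element $s$ must satisfy that $\la_l([h^l])$ is a prefix of $\la_l(\be(s))$, that $\la_r([h^r])$ is a suffix of $\la_r(\be(s))$, and that $\ka(\be(s))=h$ (the last forced by the $\la_l,\la_r$ lengths together with the $\Jc$-class of $[h^rh^l]$, which has peak $h$); these constraints pin down $\be(s)$ as the unique mountain with the prescribed hills meeting at peak $h$, and direct computation identifies this mountain as $\be([ghg])$.

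The main obstacle is the $j$-odd case of the second identity, where the reduced sandwich $S([h^r],[h^l])$ is not directly handled by Lemma~\ref{sandwich} and the uniqueness argument must work directly with canonical forms via Corollary~\ref{R}. The $\Lc$- and $\Rc$-equivalence computations throughout are technically the most laborious step, but each follows the common pattern that the river-upliftings absorb the inserted $g$'s into the surrounding mountain hills — a pattern that can be verified by routine application of the uplifting rules, once one keeps track of how the parity of $j$ governs which of $[h^c]$, $[h^l]$, $[h^r]$ lies below $[g]$ in the natural partial order.
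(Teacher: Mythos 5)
Your first identity and the $j$ even case of the second follow the paper's route and are fine. The genuine gap is the $j$ odd case of the second identity: the claimed equalities $[h^r(gh^cg)]=[h^r]$ and $[(gh^cg)h^l]=[h^l]$ are false in general, and the two sides need not even be $\Lc$- (resp.\ $\Rc$-) related, so the reduction to $S([h^r],[h^l])$ is not available. The absorption $h^rg\ap h^r$ does hold (as $j-1$ is even), giving $[h^r(gh^cg)]=[h^rh^cg]$; but $[h^c]\not\leq[g]$ when $j-2$ is odd, and the word $h^rh^cg$ remembers which of the two entries $(h^r)^l,(h^r)^r$ equals $h^c$: one finds $\la_r(\be(h^rh^cg))=(h^rh^cg)*\la_r(g)$, which equals $\la_r(\be(h^r))$ only when $h^c=(h^r)^r$. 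Concretely, for $X=\{e,f\}$, $g=e$, $A=\G(e)=\{e_1,f_1\}$ one gets $A_2=\{h_1,h_3\}$, and $h=(h_1,f_1,h_3)\in A_3$ has $h^r=h_3$ and $h^c=f_1=(h_3)^l$; then $\be(h_3\,e\,f_1\,e)=1\,e\,f_1\,h_3\,f_1\,e\,1\neq 1\,e\,f_1\,h_3\,e_1\,e\,1=\be(h_3)$, with different maximal final downhills. Worse, $(h_1,e_1,h_3)$ and $(h_1,f_1,h_3)$ are distinct elements of $A_3$ with the same outer entries, so your reduction assigns both the same set $S([h_3],[h_1])$, whereas the lemma requires the two answers $\{[ghg]\}$ to lie in the distinct $\Dc$-classes $\Dcc_{[(h_1,e_1,h_3)]}$ and $\Dcc_{[(h_1,f_1,h_3)]}$; the reduced sandwich set cannot be correct for both. (The same example defeats your uniqueness step: $\ka(\be(h^rh^l))$ is $(h_1,e_1,h_3)$, not $h$, so the $\Jc$-class of $[h^rh^l]$ does not force $\ka(\be(s))=h$.)

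The paper avoids all of this by never discarding $h^c$ and by not splitting on parity. It uses $h^cgh^c\ap h^c$ (which, unlike $gh^cg\ap h^c$, holds for every $j$) only inside the product $ef$, and then applies $S(e,f)=fV(ef)e$ to $e=[h^rh^c][g]$ and $f=[g][h^ch^l]$, so that the outer factors $[g]$ come out of the sandwich set:
$$S([h^rh^cg],[gh^ch^l])=[g]\bigl([h^ch^l]\,V([h^rh^ch^l])\,[h^rh^c]\bigr)[g]=[g]\,S([h^rh^c],[h^ch^l])\,[g]=\{[ghg]\}\,.$$
If you insist on arguing via $\Lc$- and $\Rc$-classes instead, you must work with the exact classes of $[h^rh^cg]$ and $[gh^ch^l]$ (which depend on $h^c$), and that is precisely the bookkeeping this manipulation sidesteps.
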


\begin{proof}
Clearly, 
$$S([(gh^rg)h^c],[h^c(gh^lg)])=S([gh^rh^c],[h^ch^lg])=S([h^rh^c],[h^ch^l])=\{[h]\}$$ 
and 
$$\begin{array}{ll}
S([h^r(gh^cg)],[(gh^cg)h^l])\hspace*{-.2cm} & =S([h^rh^cg],[gh^ch^l]) \\ [.2cm]
& = [gh^ch^l]\,V([h^rh^cgh^ch^l])\,[h^rh^cg] \\ [.2cm]
& = [g]\,([h^ch^l]\,V([h^rh^ch^l])\,[h^rh^c])\,[g] \\ [.2cm] 
& = [g]\,S([h^rh^c],[h^ch^l])\,[g]=\{[ghg]\}\,.\vspace*{-.4cm}
\end{array}$$
\end{proof}\vspace*{.3cm}

An immediate consequence of the previous lemma is that any regular subsemigroup of $\ftx$ containing $[B_1]$, contains also $[\ol{B}\setminus\{g\}]$. Hence,

\begin{cor}
$T\setminus\{[g]\}$ is the smallest regular subsemigroup of $\ftx$ containing $[B_1]$, while $T$ is the smallest regular subsemigroup of $\ftx$ containing $[B_1\cup\{g\}]$.
\end{cor}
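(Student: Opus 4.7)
My plan is to reduce both statements to a single inductive claim: any regular subsemigroup $R$ of $\ftx$ containing $[B_1]$ must contain $[\ol{B}\setminus\{g\}]$. By Proposition~\ref{p63}, $T$ is generated by $[\ol{B}]$ with identity $[g]$, so any non-identity element of $T$ is a product of elements of $[\ol{B}\setminus\{g\}]$ (simply delete any $[g]$ factors in a representing product). Hence $T\setminus\{[g]\}=\langle[\ol{B}\setminus\{g\}]\rangle$, and the claim immediately yields $T\setminus\{[g]\}\subseteq R$ for the first statement; together with the extra hypothesis $[g]\in R$, it yields $T\subseteq R$ for the second.

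Before proving the claim, I would verify that $T\setminus\{[g]\}$ is itself a regular subsemigroup containing $[B_1]$. For each $u=[v]\in T$ with $v\in\La_A(X)$, the canonical form $\be(v)$ has peak $g$ exactly when $v\approx g$, so $u\neq[g]$ forces $\up(u)>\up(g)$. Lemma~\ref{odot}.(ii) then gives $\up(u\odot w)>\up(g)$ for all $u,w\in T\setminus\{[g]\}$, which proves closure. Regularity is inherited from $T$ because any inverse $u'\in T$ of $u$ satisfies $u'\,\Dc\,u$ in $\ftx$ and hence shares the same peak by Corollary~\ref{R}, so $u'\neq[g]$. Finally, $[B_1]=\{[ghg]:h\in A_1\}\subseteq T\setminus\{[g]\}$ since each $ghg\in\La_A(X)$ has peak $h$ of height $\up(g)+1$.

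The core argument is the induction on $j\geq 1$ that $[B_j]\subseteq R$, with Lemma~\ref{sandB} doing the work at each step. For $j$ even, let $h\in A_j=B_j$; by induction $[gh^lg],[gh^rg]\in[B_{j-1}]\subseteq R$ (since $j-1$ is odd) and $[h^c]\in[B_{j-2}]\subseteq R$ (since $j-2$ is even), so $[(gh^rg)h^c]$ and $[h^c(gh^lg)]$ lie in $R$. Lemma~\ref{sandB} identifies the sandwich set $S([(gh^rg)h^c],[h^c(gh^lg)])$ with the singleton $\{[h]\}$ in $\ftx$; since $R$ is a regular subsemigroup, this sandwich set must meet $R$, forcing $[h]\in R$. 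The $j$ odd case is symmetric, using the second identity of Lemma~\ref{sandB} to conclude $[ghg]\in R$.

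The delicate point, and the one I expect to require the most care, is the bootstrap at $j=2$: there $h^c=g$ is not a priori in $R$, so one cannot literally form $[h^c(gh^lg)]$ as a product inside $R$. The resolution is that in $\ftx$ the products collapse via $g^2\approx g$ to $[(gh^rg)h^c]=[gh^rg]$ and $[h^c(gh^lg)]=[gh^lg]$, both of which already lie in $[B_1]\subseteq R$, so the sandwich set argument still applies cleanly. With this case handled, the induction proceeds through the parity alternation that Lemma~\ref{sandB} was designed to accommodate, and the corollary follows.
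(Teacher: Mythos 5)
Your proof is correct and takes essentially the same route as the paper: the paper obtains the corollary as an ``immediate consequence'' of Lemma~\ref{sandB}, which is precisely the inductive sandwich-set argument you spell out (any regular subsemigroup containing $[B_1]$ must meet each singleton sandwich set, hence contains $[\ol{B}\setminus\{g\}]$). The extra details you supply --- the height argument showing $T\setminus\{[g]\}$ is itself a regular subsemigroup, and the $j=2$ bootstrap where $h^c=g$ collapses via $g^2\approx g$ into the $[B_1]$ generators --- are exactly the steps the paper leaves implicit.
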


Let $X_1=\{e_1,\cdots,e_k\}$ be a set of $k$ distinct elements. Define $\phi_1:\G(X_1)\to\ol{A}$ as follows:
$$h\phi_1=\left\{\begin{array}{ll}
g & \mbox{ if } h=1\,; \\ [.1cm]
g_j & \mbox{ if } h=e_j\in X_1\,;\\ [.1cm]
((h^l)\phi_1,(h^c)\phi_1,(h^r)\phi_1) & \mbox{ if } h\in\G_j(X_1) \mbox{ for some } j\geq 2\,.
\end{array}\right.$$

\begin{lem}
The mapping $\phi_1$ is a well defined bijection.
\end{lem}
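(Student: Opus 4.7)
The strategy is a simultaneous induction on the height parameter $j$, showing that $\phi_1$ sends $\G_j(X_1)$ into $A_j$ (so is well defined into $\ol{A}$) and that the restriction $\phi_1|_{\G_j(X_1)}\colon \G_j(X_1)\to A_j$ is a bijection. Bijectivity of the full map then follows because the $\G_j(X_1)$ partition $\G(X_1)$, and the $A_j$ partition $\ol{A}$ (the latter because $A_j\subseteq \G_{i+j}(X)$, so different $j$'s give images of different height in $\Gx$, a fact that should be recorded at the very start of the proof).

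The base cases $j=0,1$ are immediate from the definitions of $\phi_1$ and the fact that $\phi_1|_{X_1}\colon X_1\to A_1$ is the chosen bijection $e_p\mapsto g_p$. For $j=2$, any element of $\G_2(X_1)$ has the form $(e_p,1,e_q)$ with $p\neq q$, and maps to $(g_p,g,g_q)$. The hypothesis $A_1\subseteq\G(g)$ gives $g\in\{g_p^l,g_p^r\}\cap\{g_q^l,g_q^r\}$, which is exactly what is needed to conclude $(g_p,g,g_q)\in\G_{i+2}(X)$ and, together with $h^c=g$, membership in $A_2$. Surjectivity at $j=2$ is then direct: every element of $A_2$ is of the form $(g_p,g,g_q)$ with $p\neq q$, and $(e_p,1,e_q)$ is its unique preimage.

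For the inductive step $j\geq 3$, the key observation is that the recursive clause makes $\phi_1$ commute with the entry operations: $(\phi_1 h)^l=\phi_1(h^l)$, $(\phi_1 h)^r=\phi_1(h^r)$, and $(\phi_1 h)^c=\phi_1(h^c)$. Given $h\in\G_j(X_1)$, induction yields $\phi_1(h^l),\phi_1(h^r)\in A_{j-1}$ and $\phi_1(h^c)\in A_{j-2}$; moreover the commutation shows $\phi_1(h^c)$ is a common entry of $\phi_1(h^l)$ and $\phi_1(h^r)$, and injectivity at height $j-1$ gives $\phi_1(h^l)\neq\phi_1(h^r)$. Hence $\phi_1(h)\in\G_{i+j}(X)$ and in fact in $A_j$, proving well-definedness. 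Injectivity on $\G_j(X_1)$ is now automatic: two triples differing in some entry have, by the inductive injectivity, images differing in the corresponding entry.

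For surjectivity at $j\geq 3$, given $h\in A_j$ I would use inductive surjectivity to pick preimages $e^l,e^r\in\G_{j-1}(X_1)$ and $e^c\in\G_{j-2}(X_1)$ of $h^l$, $h^r$, $h^c$, and then verify $(e^l,e^c,e^r)\in\G_j(X_1)$. The condition $e^l\neq e^r$ comes from $h^l\neq h^r$ and inductive injectivity; the condition that $e^c$ is a common entry of $e^l$ and $e^r$ uses the fact that $\phi_1$ commutes with taking entries, combined with injectivity at height $j-2$ to match preimages on both sides. I expect the only real bookkeeping obstacle to be exactly this last point, ensuring that the entry of $e^l$ selected to equal $e^c$ is also literally equal (not merely mapped under $\phi_1$) to the corresponding entry of $e^r$; this is where injectivity at lower heights is essential, and it is the reason for doing well-definedness, injectivity and surjectivity in a single induction rather than separately.
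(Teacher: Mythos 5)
Your proof is correct and follows essentially the same route as the paper's: induction on $j$ showing that $\phi_1$ restricts to a bijection $\G_j(X_1)\to A_j$, using the commutation of $\phi_1$ with the entry operations and injectivity at lower heights to transfer the common-entry condition in the surjectivity step. The only difference is organizational --- you run well-definedness and bijectivity as one simultaneous induction where the paper runs them as two consecutive inductions --- and your version is in fact slightly tighter, since well-definedness at level $j$ already needs $\phi_1(h^l)\neq\phi_1(h^r)$, i.e.\ injectivity at level $j-1$, a point the paper leaves implicit.
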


\begin{proof}
We begin by showing inductively on $j$ that not only $\phi_1$ is well defined, but also that $(\G_j(X_1))\phi_1\subseteq A_j$. Clearly $1\phi_1=g\in A_0$ and $h\phi_1\in A_1$ for any $h\in X_1$. If $h\in\G_2(X_1)$, then $h=(e_{j_1},1,e_{j_2})$ for some $e_{j_1},e_{j_2}\in X_1$ such that $j_1\neq j_2$. Hence, $g_{j_1}$ and $g_{j_2}$ are distinct elements of $A_1$, $(g_{j_1},g,g_{j_2})\in A_2$, and $h\phi_1$ is defined as $(g_{j_1},g,g_{j_2})$. So, $h\phi_1$ is well defined and it belongs to $A_2$.

Let now $h\in\G_j(X_1)$ for some $j>2$, and assume that $h_1\phi_1$ is well defined and it belongs to $A_{j_1}$, for any $h_1\in\G_{j_1}(X_1)$ with $j_1<j$. In particular, $(h^l)\phi_1$, $(h^r)\phi_1$ and $(h^c)\phi_1$ are well defined, $(h^l)\phi_1,\,(h^r)\phi_1\in A_{j-1}$, and $(h^c)\phi_1\in A_{j-2}$. Further, by definition of $\phi_1$,
$$(h^l)\phi_1=((h^{l^2})\phi_1,(h^{lc})\phi_1,(h^{lr})\phi_1)$$
and 
$$(h^r)\phi_1=((h^{rl})\phi_1,(h^{rc})\phi_1,(h^{r^2})\phi_1)\,.$$
Since $h^c\in\{h^{l^2},h^{lr}\}\cap\{h^{rl},h^{r^2}\}$, we conclude that
$$((h^l)\phi_1,(h^c)\phi_1,(h^r)\phi_1)\in A_j\,.$$
Thus $h\phi_1$ is well defined and it belongs to $A_j$.

Now that we have shown that $\phi_1$ is well defined, let us prove that $\phi_1$ is a bijection. Clearly, $\phi_{1|\G_0(X_1)}$ is a bijection from $\G_0(X_1)=\{1\}$ onto $A_0=\{g\}$, while $\phi_{1|\G_1(X_1)}$ is a bijection from $\G_1(X_1)=X_1$ onto $A_1=A$. Let $j\geq 2$ and assume that for all $j_1<j$, $\phi_{1|\G_{j_1}(X_1)}$ is a bijection from $\G_{j_1}(X_1)$ onto $A_{j_1}$. 

If $\phi_1(h_1)=\phi_1(h_2)\in A_j$, then $h_1,h_2\in\G_j(X_1)$, and so $h_1^l,h_2^l,h_1^r,h_2^r\in\G_{j-1}(X_1)$ and $h_1^c,h_2^c\in \G_{j-2}(X_1)$. Further,  
$$(h_1^l)\phi_1=(h_2^l)\phi_1,\;\;(h_1^c)\phi_1=(h_2^c)\phi_1\;\;\mbox{ and }\;\;(h_1^r)\phi_1=(h_2^r)\phi_1\,.$$
Consequently, by the induction hypothesis, $h_1^l=h_2^l$, $h_1^c=h_2^c$ and $h_1^r=h_2^r$; whence $h_1=h_2$. We have shown that $\phi_{1|\G_j(X_1)}$ is a one-to-one mapping.

Let $h\in A_j$. By the induction hypothesis, there exist $h_1,h_2\in\G_{j-1}(X_1)$ and $h_3\in\G_{j-2}(X_1)$ such that
$$h_1\phi_1=h^l,\;\; h_3\phi_1=h^c\;\;\mbox{ and }\;\;h_2\phi_1=h^r\,.$$
Hence, $h_3\phi_1\in\{(h_1^l)\phi_1,(h_1^r)\phi_1\}\cap\{(h_2^l)\phi_1,(h_2^r)\phi_1\}$. But since $\phi_{1|\G_{j-2}(X_1)}$ is one-to-one, then 
$$h_3\in\{h_1^l,h_1^r\}\cap\{h_2^l,h_2^r\}\,.$$
Therefore, $(h_1,h_3,h_2)\in\G_j(X_1)$ and $(h_1,h_3,h_2)\phi_1=h$. We can now conclude that $\phi_{1|\G_j(X_1)}$ is a bijection from $\G_j(X_1)$ onto $A_j$, as wanted.
\end{proof}

Next, consider the mapping $\phi:\G(X_1)\to\ftx$ defined by 
$$h\phi=\left\{\begin{array}{ll}
[h\phi_1] & \mbox{ if }\up(h) \mbox{ is even;} \\ [.2cm]
[g(h\phi_1)g] & \mbox{ if }\up(h) \mbox{ is odd.}
\end{array}\right.$$

\begin{lem}
Then $\phi$ is a skeleton mapping whose image is $[\ol{B}]$. Thus $[\ol{B}\setminus\{g\}]$ is a skeleton of $\ftx$ (with respect to $X_1$).
\end{lem}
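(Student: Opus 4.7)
The plan is to verify the three defining conditions of a skeleton mapping for $\phi$ and to compute the image; the assertion that $[\ol{B}\setminus\{g\}]$ is a skeleton then follows directly from the definition of a skeleton together with $1\phi=[g]$.

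First I would compute the image. The previous lemma gives that $\phi_1\colon\G(X_1)\to\ol{A}$ is a bijection, and the piecewise definition of $\phi$ is arranged precisely so that it matches the piecewise definition of $B_j$ (equal to $A_j$ for $j$ even, and to $\{ghg\,|\,h\in A_j\}$ for $j$ odd). Hence $\phi(\G_j(X_1))=[B_j]$ for every $j\geq 0$, and therefore $\phi(\G(X_1))=[\ol{B}]$, which sits inside $E(\ftxm)$ by Proposition~\ref{p63}.

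For condition~(i) I would observe that $e_j\phi=[gg_jg]\in[B_1]\subseteq E(\ftx)$, and injectivity of $\phi_{|X_1}$ follows because $gg_jg$ is already a one-peak mountain with $\ka=g_j$, so $[gg_jg]\in\Dcc_{[g_j]}$; as the $g_j$ are distinct elements of $\G(g)$, Corollary~\ref{R}(iii) yields distinct $\Dc$-classes, hence distinct $\rho$-classes. Condition~(ii) is an immediate computation: $[g][gg_jg]=[g^2g_jg]=[gg_jg]$ using $g^2\ap g$ from $\rho_e$, and similarly on the right.

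Condition~(iii) is the substantive part, and I would handle it by splitting on the parity of $\up(h)$. Fix $h\in\G_j(X_1)$ with $j\geq 2$ and set $H=h\phi_1\in A_j$. If $j$ is even, then $\up(h^l)=\up(h^r)=j-1$ is odd and $\up(h^c)=j-2$ is even, so the definition of $\phi$ gives
$$(h^r\phi)(h^c\phi)=[(gH^rg)H^c],\qquad (h^c\phi)(h^l\phi)=[H^c(gH^lg)],$$
which are exactly the arguments appearing in the first identity of Lemma~\ref{sandB}; that lemma then forces $h\phi=[H]$ to be the unique element of this sandwich set. If $j$ is odd the parities swap, producing $[H^r(gH^cg)]$ and $[(gH^cg)H^l]$ instead, which match the second identity of Lemma~\ref{sandB} and yield $h\phi=[gHg]$ as the unique element of the corresponding sandwich set. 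The main obstacle is purely notational bookkeeping --- making sure the parity case analysis lines the products up correctly with the two halves of Lemma~\ref{sandB} without slips on the heights of $h^l$, $h^c$, $h^r$ (including the degenerate $h^c=1$ when $j=2$, where $H^c=g$ and $g^2\ap g$ absorbs the redundancy) --- but once that is organised, both conditions follow in one stroke from Lemma~\ref{sandB}, and the skeleton assertion on $[\ol{B}\setminus\{g\}]$ is immediate.
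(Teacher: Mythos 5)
Your proposal is correct and follows essentially the same route as the paper: the image and injectivity come from the bijectivity of $\phi_1$, conditions $(i)$ and $(ii)$ from Proposition~\ref{p63} (idempotency of $[\ol{B}]$ and $[g]$ acting as identity), and condition $(iii)$ from the two halves of Lemma~\ref{sandB} matched to the parity of $\up(h)$. The paper states this last step in one line, whereas you spell out the parity bookkeeping explicitly --- that is just added detail, not a different argument.
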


\begin{proof}
By the previous lemma, $\phi$ is a well defined mapping whose image is $[\ol{B}]$ and $\phi_{|X_1}$ is a one-to-one mapping. By Proposition \ref{p63}, $[\ol{B}]$ is a set of idempotents of $\ftx$ and $[g][h]=[h]=[h][g]$ for any $h\in\ol{B}$. Hence $\phi$ is a mapping from $\G(X_1)$ to $E(\ftx)$ satisfying conditions $(i)$ and $(ii)$ of the definition of skeleton mapping. Finally, By Lemma \ref{sandB}, $\phi$ also satisfies the condition $(iii)$ of the definition of skeleton mapping. Therefore, $\phi$ is a skeleton mapping and $[\ol{B}\setminus\{g\}]$ is a skeleton of $\ftx$.
\end{proof}

\begin{prop}\label{r61}
$T$ is isomorphic to $\ft^1(X_1)$.
\end{prop}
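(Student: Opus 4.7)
The plan is to exhibit $\varphi$---the unique homomorphism $\ft^1(X_1)\to\ftxm$ extending the skeleton mapping $\phi$ furnished by Proposition \ref{r52}---as an isomorphism onto $T$. Surjectivity is already in hand, since the image of $\varphi$ equals $\langle[\ol{B}]\rangle$ by Proposition \ref{r52}, and this in turn equals $T$ by Proposition \ref{p63}. Everything reduces to injectivity, which I will attack by comparing canonical forms on both sides.

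On the left, every $\rho$-class of $\ft^1(X_1)$ has a unique representative in $\M^1(X_1)$; on the right, every element of $T$ has a unique representative in $\La_A(X)\cap(\ltp\cup\{g\})$, since by Lemma \ref{valelem}(iii), together with the fact (already proved) that uplifting rivers preserves $\La_A(X)$, an arbitrary $w\in\La_A(X)$ has canonical form $\la_l(g)*\be_2(w)*\la_r(g)$ with $\be_2(w)\in\La_A(X)\cap(\ltp\cup\{g\})$. I define $\mu:\M^1(X_1)\to\La_A(X)\cap(\ltp\cup\{g\})$ by applying $\phi_1$ entry-wise. Because $\phi_1$ respects the triple structure and shifts heights uniformly by $+i$, the map $\mu$ carries landscapes to landscapes, has no rivers when $u$ has none, and sends the trivial mountain to $g$ and nontrivial mountains into $\ltp$; bijectivity of $\mu$ is immediate from that of $\phi_1$.

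The heart of the argument is the identity $\varphi([u])=[\mu(u)]$ for every $u\in\M^1(X_1)$. A useful preliminary observation is that $\varphi([h])=[g(h\phi_1)g]$ for \emph{every} $h\in\G(X_1)$, not only for those of odd height: when $\up(h)$ is odd this is by definition of $\phi$, while when $\up(h)$ is even the element $[h\phi_1]$ lies in $T$ by Proposition \ref{p63}, and since $[g]$ is the identity of $T$ one has $[g(h\phi_1)g]=[h\phi_1]=\varphi([h])$. Writing $u=h_0h_1\cdots h_n$ and expanding $\varphi([u])=\prod_i\varphi([h_i])$, collapsing $gg$ to $g$ via $\rho_e$, and using $h_0\phi_1=g=h_n\phi_1$, I obtain $\varphi([u])=[g\,h_1'\,g\,h_2'\,g\cdots g\,h_{n-1}'\,g]$ with $h_i'=h_i\phi_1$. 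The internal $g$'s must then be absorbed: since consecutive letters $h_i,h_{i+1}$ of a mountain differ in height by exactly one, precisely one of $\up(h_i),\up(h_{i+1})$ is even, hence precisely one of $[h_i'],[h_{i+1}']$ lies in $T$ and swallows the intervening $[g]$, giving $[h_i'\,g\,h_{i+1}']=[h_i'\,h_{i+1}']$. Iterating this substitution from left to right yields $\varphi([u])=[g\,h_1'\cdots h_{n-1}'\,g]=[\mu(u)]$.

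Injectivity of $\varphi$ is then immediate: $\varphi([u_1])=\varphi([u_2])$ forces $[\mu(u_1)]=[\mu(u_2)]$ in $T$, but $\mu(u_1)$ and $\mu(u_2)$ already lie in $\La_A(X)\cap(\ltp\cup\{g\})$, so they coincide as landscapes, and the injectivity of $\mu$ gives $u_1=u_2$. I expect the main obstacle to be the internal-$g$ absorption in the previous paragraph: the parity-based case analysis is short but delicate, and it is precisely here that the careful wrapping of odd-height generators with $g$'s in the definition of $\phi$ becomes essential.
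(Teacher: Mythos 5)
Your proposal is correct and follows essentially the same route as the paper: surjectivity from Propositions \ref{r52} and \ref{p63}, and injectivity by showing $[u]\varphi$ has canonical representative $g(h_1\phi_1)\cdots(h_{n-1}\phi_1)g$ in $\La_A(X)\cap(\ltp\cup\{g\})$ and invoking uniqueness of such representatives together with the bijectivity of $\phi_1$. The only cosmetic difference is that you wrap every generator as $[g(h\phi_1)g]$ and strip the internal $g$'s by the parity argument, whereas the paper wraps only the odd-height ones and absorbs $g$ into the even-height factors directly; these are the same computation.
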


\begin{proof}
Using Proposition \ref{r52}, let $\varphi:\ft^1(X_1)\to\ftxm$ be the unique homomorphism extending the skeleton mapping $\phi$. Then $(\ft^1(X_1))\varphi=T$ by Proposition \ref{p63}.

Let $u=1h_1\cdots h_{2j-1}1\in\M(X_1)$. Note that $1\phi=[g]$ and 
$$h_{j_1}\phi=\left\{\begin{array}{ll} 
[g(h_{j_1}\phi_1)g] & \mbox{ if } j_1 \mbox{ odd;} \\ [.2cm] 
[h_{j_1}\phi_1] & \mbox{ if } j_1 \mbox{ even.}
\end{array}\right.$$ 
Since $g(h_{j_1}\phi_1)\ap h_{j_1}\phi_1\ap (h_{j_1}\phi_1)g$ if $j_1$ even, then
$$[u]\varphi =(1\phi)(h_1\phi)\cdots (h_{2j-1}\phi)(1\phi)=[\al(u)]\,,$$
where $\al(u)=g(h_1\phi_1)\cdots(h_{2j-1}\phi_1)g\in\La_A(X)\cap(\ltp\cup\{g\})$. Thus, if $u$ and $v$ are two mountains of $\ft^1(X_1)$ such that $[u]\varphi=[v]\varphi$, then $\al(u)=\al(v)$ since $[u]\phi$ and $[v]\phi$ have a unique representative in the set $\La_A(X)\cap(\ltp\cup\{g\})$. Consequently, $u=v$ because $\phi_1$ is a bijection from $\G(X_1)$ onto $\ol{A}$. We have shown that $\varphi$ is an isomorphism from $\ft^1(X_1)$ onto $T$; whence $T$ is isomorphic to $\ft^1(X_1)$.
\end{proof}

We reinforce that we have shown that if $g\in\Gx$ and $\{h_1,\cdots,h_k\}\subseteq\G(g)$, then $\ftx$ has a regular subsemigroup weakly generated by 
$$\{[gh_1g],\cdots, [gh_kg]\}$$ 
isomorphic to $\ft_k$. By Lemma \ref{sizeG}, we conclude also that $\ftx$ contains infinite many distinct copies of $\ft_k$, for any $k\in\mathbb{N}$, if $|X|\geq 2$. The next corollary combines Lemma \ref{sizeG} with the previous proposition.

\begin{cor}
Let $X$ and $Y$ be two sets such that $|X|=n$ and $|Y|=m$ with $m>n>1$. 
\begin{itemize}
\item[(i)] If $m\leq 2n-2$ and $g\in X$, then there exists an embedding $\varphi:\ft^1(Y)\to\ft(X)$ such that $[1]\varphi=[g]$ and, for each $y\in Y$, $[y]\varphi=[ghg]\in\Dcc_{[h]}$ for some $h\in\G_2(X)$.
\item[(ii)] If $i\geq\max\left\{2,\,\frac{1}{2}\log_2\left(\frac{6m-8}{3n-5}\right)\right\}$ and $g\in\G_i(X)$, then there exists an embedding $\varphi:\ft^1(Y)\to\ft(X)$ such that $[1]\varphi=[g]$ and, for each $y\in Y$, $[y]\varphi=[ghg]\in\Dcc_{[h]}$ for some $h\in\G_{i+1}(X)$.
\end{itemize}
\end{cor}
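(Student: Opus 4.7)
The plan is to reduce both parts of the corollary to a direct application of Proposition~\ref{r61}. Fix $g$ as in the statement and choose any subset $A = \{h_1, \ldots, h_m\}$ of $m$ distinct elements of $\G(g)$. Proposition~\ref{r61} then supplies a regular subsemigroup $T \subseteq \ft(X)$ together with an isomorphism $\ft^1(X_1) \cong T$, where $|X_1| = m$. Composing with any bijection $Y \to X_1$ produces an embedding $\varphi: \ft^1(Y) \to \ft(X)$ sending the identity $[1]$ of $\ft^1(Y)$ to the identity $[g]$ of $T$.

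The rest of the argument then splits into two independent checks: (a)~that a set $A \subseteq \G(g)$ of size $m$ actually exists under the hypotheses, and (b)~that each generator $y_j \in Y$ is sent to the required $\Dc$-class.

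For (a), I would apply Lemma~\ref{sizeG}. In item~(i), $g \in X = \G_1(X)$ gives $|\G(g)| = 2n-2$, so $m \leq 2n-2$ is precisely the bound that guarantees $|\G(g)| \geq m$. In item~(ii), $g \in \G_i(X)$ with $i \geq 2$ gives $|\G(g)| = (2^{2i-1}(3n-5)+4)/3$; the inequality $|\G(g)| \geq m$ rearranges routinely to $2^{2i} \geq (6m-8)/(3n-5)$, i.e.\ $i \geq \tfrac{1}{2}\log_2\bigl((6m-8)/(3n-5)\bigr)$, which together with the explicit lower bound $i \geq 2$ is exactly the stated hypothesis.

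For (b), I would trace through the skeleton mapping $\phi$ built just before Proposition~\ref{r61}. Each element $y \in X_1 = \G_1(X_1)$ has odd height $1$, so by definition $y\phi = [g(y\phi_1)g]$; transporting via the bijection $Y \to X_1$ that identifies $y_j \in Y$ with the $\phi_1$-preimage of $h_j$ yields $[y_j]\varphi = [gh_jg]$. Finally, since $h_j \in \G(g) \subseteq \G_{i+1}(X)$ satisfies $g \in \{h_j^l, h_j^r\}$, the word $gh_jg$ is already a mountain with unique peak $h_j$, so $\ka(\be(gh_jg)) = h_j$ and Corollary~\ref{R}.(iii) delivers $[gh_jg] \Dc [h_j]$, placing $[gh_jg]$ in $\Dcc_{[h_j]}$ as required. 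No step is genuinely hard: the corollary is essentially a packaging of Proposition~\ref{r61}, Lemma~\ref{sizeG}, and Corollary~\ref{R}.(iii), and the only small piece of computation is the logarithmic rearrangement in part~(ii).
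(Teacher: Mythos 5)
Your proposal is correct and follows essentially the same route as the paper: both reduce the statement to Proposition \ref{r61} applied to an $m$-element subset of $\G(g)$, use Lemma \ref{sizeG} to verify $|\G(g)|\geq m$ under each hypothesis (with the same logarithmic rearrangement in part (ii)), and read off the images of the generators from the skeleton mapping. The only nitpick is terminological: $gh_jg$ is not itself a mountain in the paper's sense (its endpoints are $g$, not $1$), but your subsequent computation $\ka(\be(gh_jg))=h_j$ is exactly what is needed, so the argument stands.
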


\begin{proof}
$(i)$. Assume that $n<m\leq 2n-2$ and let $g\in X$. Note that $\G(g)$ has $2n-2$ elements by Lemma \ref{sizeG}. So, let $A=\{h_1,\cdots,h_m\}$ be a set of $m$ elements of $\G(g)$. By the previous results, $\ftx$ has the smallest regular subsemigroup $T$ containing $\{[g]\}\cup\{[ghg]\,|\;h\in A\}$, and $T$ is isomorphic to $\ft^1(Y)$ under an isomorphism $\varphi:\ft^1(Y)\to\ftx$ such that $[1]\varphi=[g]$ and, for each $y\in Y$, $[y]\varphi=[ghg]\in\Dcc_{[h]}$ for some $h\in A$.

$(ii)$. Assume that $i\geq\max\left\{2,\,\frac{1}{2}\log_2\left(\frac{6m-8}{3n-5}\right)\right\}$ and let $g\in\G_i(X)$. The proof of $(ii)$ is similar to the proof of $(i)$ and we just need to show that $\G(g)$ has, at least, $m$ elements. But,
$$i\geq \frac{1}{2}\log_2\left(\frac{6m-8}{3n-5}\right)\;\Longleftrightarrow\;\frac{2^{2i-1}(3n-5)+4}{3}\geq m\,.$$
Hence, $|\G(g)|\geq m$ and the embedding $\varphi$ described in $(ii)$ exists.
\end{proof}

\begin{cor}
$\ftt$ has a copy of $\mft$ as a subsemigroup for all $m\geq 2$. Furthermore, all regular semigroups weakly generated by a finite set of idempotents (including all finitely idempotent generated regular semigroups) strongly divide $\ftt$.
\end{cor}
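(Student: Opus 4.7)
The plan is to combine the previous corollary specialized to $n=2$ with Proposition \ref{fiwig} to obtain both statements.

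For the first assertion, fix $m\geq 2$ and apply part $(ii)$ of the previous corollary with $|X|=n=2$ and $|Y|=m$. Since $3n-5=1$, the required lower bound becomes $i\geq\max\{2,\,\tfrac{1}{2}\log_2(6m-8)\}$, which is finite, so we can pick such an $i$ and any $g\in\G_i(X)$. The corollary then provides an embedding $\varphi:\mft^1\to\ftt$, whose restriction to $\mft$ (the image of $1$ under $\varphi$ being $[g]$) gives a copy of $\mft$ inside $\ftt$. I would also remark that this image coincides, in the notation of Proposition \ref{p63} and its corollary, with $T\setminus\{[g]\}$, which was already shown to be a \emph{regular} subsemigroup of $\ftt$; this observation is what makes the embedding usable in the second assertion.

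For the second assertion, let $S$ be a regular semigroup weakly generated by a finite set of idempotents $X$, with $|X|=m$. Then $(S,\phi')\in\wg$ for some one-to-one map $\phi':X\to E(S)$, so by Proposition \ref{fiwig} there exists a surjective morphism $\varphi_S:\mft\to S$ in the category $\wg$. By the first part, $\mft$ is isomorphic to a regular subsemigroup $T'$ of $\ftt$ (the copy $T\setminus\{[g]\}$ above). Composing the inverse of this isomorphism with $\varphi_S$ exhibits $S$ as a homomorphic image of the regular subsemigroup $T'$ of $\ftt$, which is exactly the definition of strong division. In particular, every finitely idempotent generated regular semigroup qualifies, because if $S$ is generated (as a semigroup) by a finite set $E$ of idempotents, then any regular subsemigroup of $S$ containing $E$ already contains $\langle E\rangle=S$, so $S$ is weakly generated by $E$.

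The only genuinely delicate point is ensuring that the embedding produced in the first part lands in a regular subsemigroup of $\ftt$ (rather than merely a subsemigroup), because strong division demands a regular intermediate semigroup. That is not a new computation, however: it is already contained in Proposition \ref{p63} together with the corollary immediately following it, which identifies the image as the smallest regular subsemigroup of $\ftt$ containing $[B_1]$. Everything else is a mechanical combination of already proved facts, so no further technical obstacle is expected.
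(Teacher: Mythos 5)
Your proposal is correct and follows essentially the same route as the paper: the first assertion is the $n=2$ specialization of the preceding corollary (with the $m=2$ case trivial), and the second follows by composing with the surjection from Proposition \ref{fiwig}. Your extra remarks — that the image is the regular subsemigroup $T\setminus\{[g]\}$ from Proposition \ref{p63}, and that finitely idempotent generated regular semigroups are automatically weakly generated by their idempotent generating set — are correct and merely make explicit what the paper leaves implicit.
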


\begin{proof}
The first part of this corollary is just a particular case of the previous corollary. The second part now follows from Proposition \ref{fiwig}.
\end{proof}

Since every finite semigroup can be embedded into a finite idempotent generated regular semigroup \cite{howie66}, we obtain also the following result:

\begin{cor}
Every finite semigroup divides $\ftt$.
\end{cor}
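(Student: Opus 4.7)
The plan is to combine the cited theorem of Howie with the previous corollary. First I would recall Howie's result \cite{howie66}, which embeds any finite semigroup $S$ into a finite regular semigroup $R$ that is generated by its idempotents. Since $R$ is finitely generated by idempotents, $R$ is in particular a finitely idempotent generated regular semigroup, so it falls within the scope of the previous corollary.

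Next, applying the previous corollary to $R$, I obtain a regular subsemigroup $T'\subseteq\ftt$ together with a surjective (semigroup) homomorphism $\psi:T'\to R$. The only remaining step is to transport the embedding $S\hookrightarrow R$ back through $\psi$: let $U=S\psi^{-1}\subseteq T'$, where we identify $S$ with its image in $R$. Then $U$ is a subsemigroup of $T'$, hence a subsemigroup of $\ftt$, and the restriction $\psi|_U:U\to S$ is a surjective homomorphism. By definition of division, this exhibits $S$ as a divisor of $\ftt$.

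There is really no hard step here; the work has all been done in the two results being combined. The only subtlety worth flagging is the distinction between division and strong division: we do not claim (and do not need) that $U$ is a regular subsemigroup of $\ftt$, only a subsemigroup, which is automatic from $U=S\psi^{-1}$. Thus ``divides'' rather than ``strongly divides'' is the right conclusion for an arbitrary finite $S$, and the corollary follows at once.
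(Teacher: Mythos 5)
Your argument is correct and is exactly the one the paper intends: Howie's embedding of $S$ into a finite idempotent generated regular semigroup $R$, the preceding corollary giving a regular subsemigroup $T'$ of $\ftt$ mapping onto $R$, and the pullback $S\psi^{-1}$ exhibiting the division. The paper leaves these routine details implicit, but your filling-in (including the remark on division versus strong division) matches its reasoning.
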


\section{The principal factors of $\ftx$}

In this last section, we are going to work with the model $\mx$ of $\ftx$ instead of $\ftx$. Hence, in this section, $\Dcc_g$ denotes the $\Dc$-class of $\be(g)$ in $\mx$ for any $g\in\Gx\setminus\{1\}$. Then $\Dcc_g=\{u\in \mx\,|\;\ka(u)=g\}$. Let $F_g$ denote the principal factor of $\be(g)$ in $\mx$, that is, $F_g$ is the semigroup $\Dcc_g\cup\{0\}$ with a zero element $0$ and product $\cdot$ defined by
$$u\cdot v=\left\{\begin{array}{ll}
u\odot v & \mbox{ if } u\odot v\in \Dcc_g \\ [.2cm]
0 & \mbox{ otherwise}\;,
\end{array}\right.$$
for any mountains $u,v\in \Dcc_g$. In this section, we analyze and compare the structure of these principal factors. We begin by studying the idempotents of $\Dcc_g$.

\subsection{Idempotents of $\Dcc_g$} 

Let $u=g_n\cdots g_0\in\lom$ with $g_0\neq 1$, and let $q_i\in\{l,r\}$ be such that $g_{i-1}=g_i^{q_i}$, for $i\in\{1,\cdots,n\}$. Define $\al(u)=q_n\cdots q_1\in\{l,r\}^+$. If $g_0\in\G_1(X)$, define also $\al(u1)=\al(u)$.  

Let $S_i=\{s_{i,1},\cdots, s_{i,2^{i-1}}\}$ be the list of all $2^{i-1}$ words of length $i-1$ from $\{l,r\}^+$, listed by increasing order with respect to the lexicographic order $\preceq$. For each $g\in\Gix$ with $i\geq 1$, let $\La_1^-(X,g)$ be the set of all downhills from $g$ to $1$. Clearly, $\alpha$ defines a bijection from $\La_1^-(X,g)$ onto $S_i$. Consider
$$\La_1^-(X,g)=\{v_{g,1},\cdots, v_{g,2^{i-1}}\}$$
such that $\al(v_{g,k})=s_{i,k}$.

We now use the following notation to refer to the $\Lc$-classes of $\Dcc_g$: $\Lcc_{g,k}$ is the $\Lc$-class of $\Dcc_g$ constituted by all mountains $u$ such that $\la_r(u)=v_{g,k}$, or equivalently, constituted by all mountains $u\in\Dcc_g$ such that $\al(\la_r(u))=s_{i,k}$. We introduce also a similar notation for the $\Rc$-classes of $\Dcc_g$: $\Rcc_{g,k}$ is the $\Rc$-class of $\Dcc_g$ constituted by all mountains $u$ such that $\cevl{\la_l(u)}=v_{g,k}$. We define the \emph{canonical ``egg-box'' representation} of $\Dcc_g$ as its ``egg-box'' representation where the $k^{th}$ column is the $\Lc$-class $\Lcc_{g,k}$ and the $k^{th}$ row is the $\Rc$-class $\Rcc_{g,k}$. We denote this representation by $E_g$.

Note that $\cevl{v_{g,j}}*v_{g,k}$ is the only mountain in the $\Hc$-class $\Hcc_{g,j,k}=\Rcc_{g,j}\cap\Lcc_{g,k}$ of $E_g$. Thus, all $\Hc$-classes $\Hcc_{g,k,k}$ in the main diagonal of $E_g$ are trivial groups, that is, they are constituted by a single idempotent. Our intention in this subsection is to prove that there are no idempotents below the main diagonal of $E_g$.

Let $u=g_n\cdots g_0h_1\cdots h_m$ be a valley with river $g_0$. Since $\be_2(u)$ has no rivers, $\be_2(u)\in\ltp\cup\lo\cup\Gx$, $\si(\be_2(u))=g_n$ and $\tau(\be_2(u))=h_m$. Further, if $\be_2(u)\in\lop$ then $n<m$; if $\be_2(u)\in\lom$ then $n>m$; and if $\be_2(u)\in\Gx$ then $n=m$ and $g_n=h_m$. The next result give us more details about the words $\be_2(u)$, but we need to introduce the following notions first. 

Note that any hill from $g_n$ to $h_m$ (if it exists) has length $|n-m|+1$. If $n>m$, then an \emph{$u$-hill} is a downhill 
$$g_n\cdots g_kg_k^l\cdots g_k^{l^{k-m}}$$
for some $m\leq k\leq n$ such that 
$$g_k^{l^{k-m}}=h_m,\;\; g_k^l\neq g_{k-1} \;(\mbox{if } k\neq m),\;\mbox{ and }\; g_k^{l^{i-2}c}\neq g_k^{l^i} \mbox{ for } 2\leq i\leq k-m\,. $$ 
Thus, for $n>m$, an $u$-hill is a downhill from $g_n$ to $h_m$ with some extra properties, and a particular case worth noticing occurs when $g_m=h_m$, namely $g_n\cdots g_m$.  

Now, if $n<m$,  we then define an \emph{$u$-hill} dually, that is, as an uphill 
$$h_k^{r^{k-n}}\cdots h_k^rh_k\cdots h_m$$
for some  $n\leq k\leq m$ such that 
$$h_k^{r^{k-n}}=g_n,\;\; h_k^r\neq h_{k-1} \;(\mbox{if } k\neq n),\;\mbox{ and }\; h_k^{r^{i-2}c}\neq h_k^{r^i} \mbox{ for } 2\leq i\leq k-n\,. $$ 
We also define a \emph{$(p,q)$-landscape} as a landscape
$$g^{r^p}g^{r^{p-1}}\cdots g^rgg^l\cdots g^{l^{q-1}}g^{l^q}\in\ltp\,,$$
for some $g\in\Gx$ such that $g^{r^{i-2}c}\neq g^{r^i}$ for $2\leq i\leq p$ and $g^{l^{j-2}c}\neq g^{l^j}$ for $2\leq j\leq q$.

\begin{lem}\label{lem71}
Let $u=g_n\cdots g_0h_1\cdots h_m$ be a valley with river $g_0$. Then $\be_2(u)$ is either a $(p,q)$-landscape with $p\leq m$ and $q\leq n$, an $u$-hill, or just $g_n$ with $n=m$ and $g_n=h_m$.
\end{lem}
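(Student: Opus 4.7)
The plan is to induct on $n+m$, examining the effect of uplifting the unique river $g_0$ of the valley $u$. For the base case $n=m=1$, $u=g_1g_0h_1$: if $g_1=h_1$, then uplifting $g_0$ collapses $u$ to the single letter $g_1$ (case (c), $n=m=1$ and $g_n=h_m$); if $g_1\neq h_1$, it produces the river-free word $g_1g_0'h_1$ with $g_0'=(h_1,g_0,g_1)$, a $(1,1)$-landscape with peak $g_0'$ (case (a), $p=q=1\leq m=n$).

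For the inductive step, uplift $g_0$. If $g_1=h_1$, the result is $u'=g_n\cdots g_1h_2\cdots h_m$. When $n=1$ (resp. $m=1$), $u'$ is a pure uphill (resp. downhill) from $g_n$ to $h_m$, directly matching a $u$-hill with $k=n=1$ (resp. $k=m$). When $n,m\geq 2$, $u'$ is itself a valley of parameters $n-1,m-1$ with river $g_1=h_1$; by induction $\be_2(u')=\be_2(u)$ has one of the three forms, each of which translates back to a form for $u$ under a straightforward re-indexing (the bounds $p\leq m-1$, $q\leq n-1$ remain within $p\leq m$, $q\leq n$; a $u'$-hill becomes a $u$-hill after shifting $k$ by one; the single-letter case demands $g_n=h_m$ and $n=m$). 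If instead $g_1\neq h_1$, the result is $u''=g_n\cdots g_1\,g_0'\,h_1\cdots h_m$ with $g_0'=(h_1,g_0,g_1)$ of height $\up(g_0)+2$: a new interior peak flanked by fresh rivers at $g_1$ (when $n\geq 2$) and $h_1$ (when $m\geq 2$). By confluence of the uplifting rules we may process $g_1$ next; the sub-cases $g_2=g_0'$ vs.\ $g_2\neq g_0'$ either yield a shorter valley to which induction applies, or produce an intermediate landscape in which further rivers appear and the reduction continues. A symmetric analysis on the $h$-side completes the argument.

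The main obstacle lies in Case 2: after uplifting $g_0$, the word $u''$ is no longer a valley but contains a new interior peak between two rivers, so the induction hypothesis cannot be invoked directly on $u''$. The case analysis must track, at each subsequent uplifting, whether a newly created neighbour matches a letter further out (triggering the deletion rule of uplifting) or differs from it (triggering the substitution rule and introducing a still higher letter). The nontriviality conditions $g_k^{l^{i-2}c}\neq g_k^{l^i}$ in the $u$-hill definition and the analogous conditions in the $(p,q)$-landscape definition are precisely those ensuring no further rivers remain, and they emerge automatically once the uplifting process terminates. The bounds $p\leq m$ and $q\leq n$ reflect that any leftward chain from the final peak cannot exceed the length of the original uphill part, and dually for the rightward chain.
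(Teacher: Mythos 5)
There is a genuine gap, and you have in fact pointed at it yourself: in the case $g_1\neq h_1$, uplifting the river $g_0$ produces $u''=g_n\cdots g_1\,g_0'\,h_1\cdots h_m$, which is no longer a valley but a pair of valleys glued at the new, higher peak $g_0'$, with fresh rivers at $g_1$ and $h_1$. Your induction hypothesis (stated for valleys, measured by $n+m$) does not apply to $u''$, and the sentence ``the sub-cases \dots either yield a shorter valley to which induction applies, or produce an intermediate landscape in which further rivers appear and the reduction continues'' is a description of the difficulty, not a resolution of it. Even if you try to salvage the idea by reducing the two sub-valleys $g_n\cdots g_1g_0'$ and $g_0'h_1\cdots h_m$ separately, their reduced forms can meet so as to turn $g_0'$ itself into a river of the concatenation, forcing further upliftings on a word that is again not a valley and whose parameters are not controlled by your induction measure. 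This recombination step is exactly the hard content of the lemma, and it is absent.

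The paper avoids this trap by choosing a different reduction: it inducts on the length of $u$ and peels off an \emph{endpoint} letter rather than attacking the central river. For $m>1$ it sets $v=g_n\cdots g_0h_1\cdots h_{m-1}$, which is still a valley, applies the induction hypothesis to classify $\be_2(v)$ as a $(p,q)$-landscape, a $v$-hill, or a single letter, and then computes $\be_2(u)=\be_2(\be_2(v)h_m)$; appending $h_m$ to a river-free landscape creates at most one new river at the junction, and each of the three shapes of $\be_2(v)$ can be analyzed explicitly (the case $m=1$, i.e.\ appending to the other end, is treated first and reused here). If you want to complete your argument, you should switch to this endpoint-peeling decomposition; your base case and your $g_1=h_1$ branch are fine, but they are the easy parts.
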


\begin{proof}
We prove this result by induction on the length of the valley $u$. If $u=g_1g_0h_1$, then either $\be_2(u)=g_1$ if $g_1=h_1$, or $\be_2(u)=g_1gh_1$ for $g=(h_1,g_0,g_1)$ if $g_1\neq h_1$. Note that, in the latter case, $\be_2(u)$ is a $(1,1)$-landscape. 

Assume now that $\l(u)>3$ and that the result holds for any valley of length less than $\l(u)$. If $m=1$, then $n\geq 2$ and let $v$ be the valley $g_{n-1}\cdots g_0h_1$. We can apply the induction hypothesis to $v$ and assume that $\be_2(v)$ is either a $(1,q)$-landscape with $q\leq n-1$, a $v$-hill, or just $g_1$ with $n=2$ and $g_1=h_1$. In the latter two cases, $\be_2(u)=g_n\be_2(v)$ and $\be_2(u)$ is an $u$-hill. In the former case, note that $\be_2(v)$ is, in fact, a $(1,n-1)$-landscape, that is,
$$\be_2(v)=g^rgg^l\cdots g^{l^{n-1}}$$
for $g^r=g_{n-1}\neq g^l$, $g^{l^{n-1}}=h_1$ and $g^{l^{i-2}c}\neq g^{l^i}$ for $2\leq i\leq n-1$. If $g=g_n$, then $\be_2(u)=gg^l\cdots g^{l^{n-1}}$ and $\be_2(u)$ is an $u$-hill. If $g\neq g_n$, then
$$\be_2(u)=g_nhgg^l\cdots g^{l^{n-1}}$$
for $h=(g,g_{n-1},g_n)$. Hence $g=h^l$, $g_n=h^r$, $h^c=g_{n-1}\neq g^l=h^{l^2}$, and $\be_2(u)$ is a $(1,n)$-landscape. Note that we have proved that this result holds for valleys $g_n\cdots g_0h_1$ with river $g_0$. 

Let us consider now the general case with $m>1$, and let $v$ be the valley 
$$v=g_n\cdots g_0h_1\cdots h_{m-1}$$
with river $g_0$. Applying the induction hypothesis to $v$, we can assume that $\be_2(v)$ is either a $(p,q)$-landscape with $p\leq m-1$ and $q\leq n$, a $v$-hill, or just $g_n$ with $n=m-1$ and $g_n=h_{m-1}$. Note that $\be_2(u)=\be_2(v)h_m$ if $\be_2(v)=g_n$ or if $\be_2(v)$ is a $v$-hill with $n<m-1$. In both cases, $\be_2(u)$ becomes an $u$-hill. 

Assume that $\be_2(v)$ is a $v$-hill with $n>m-1$. Then $\be_2(u)=\be_2(\be_2(v)h_m)$ and $\be_2(v)h_m$ is a valley with river $h_{m-1}$. We can apply now the case $m=1$ to the valley $\be_2(v)h_m$ and conclude that $\be_2(u)=\be_2(\be_2(v)h_m)$ is either a $(1,n-m+1)$-landscape, a $\be_2(v)h_m$-hill, or just $g_n$ with $n=m$ and $g_n=h_m$. If $\be_2(u)$ is a $\be_2(v)h_m$-hill, we need to consider two cases:
$$\be_2(v)=g_n\cdots g_{m-1}\;\;\mbox{ with }\;\; g_{m-1}=h_{m-1}\,,$$
or
$$\be_2(v)=g_n\cdots g_kg_k^l\cdots g_k^{l^{k-m+1}}$$
for some $m\leq k\leq n$ such that  $g_k^{l^{k-m+1}}=h_{m-1}$, $g_k^l\neq g_{k-1}$ and $g_k^{l^{i-2}c}\neq g_k^{l^i}$ for $2\leq i\leq k-m+1$.
In the former case, $\be_2(u)$ is clearly an $u$-hill also. In the latter case, note that 
$$\be_2(u)=g_n\cdots g_{k_1}g_{k_1}^l\cdots g_{k_1}^{l^{k_1-m}}$$
for some $k\leq k_1\leq n$ such that  $g_{k_1}^{l^{k_1-m}}=h_m$, $g_{k_1}^l\neq g_{k_1-1}$ and $g_{k_1}^{l^{i-2}c}\geq g_{k_1}^{l^i}$ for $2\leq i\leq k_1-m$. Thus $\be_2(u)$ is again an $u$-hill. We have shown that $\be_2(u)$ is either a $(1,n-m+1)$-landscape, an $u$-hill, or just $g_n$ with $n=m$ and $g_n=h_m$ if $\be_2(v)$ is a $v$-hill with $n>m-1$.

Finally, assume that $\be_2(v)$ is an $(p,q)$-landscape with $p\leq m-1$ and $q\leq n$, that is,
$$\be_2(v)=g^{r^p}\cdots g^rgg^l\cdots g^{l^q}\in\ltp\,,$$
for some $g\in\Gx$,  $p\leq m-1$ and $q\leq n$ such that $g^{r^{i-2}c}\neq g^{r^i}$ for $2\leq i\leq p$ and $g^{l^{j-2}c}\neq g^{l^j}$ for $2\leq j\leq q$. Note that $g^{r^p}=g_n$  and $v_1=gg^l\cdots g^{l^q}h_m$ is a valley with river $g^{l^q}=h_{m-1}$. If $q=1$ and $h_m=g$, then $\be_2(u)=g^{r^p}\cdots g^rg$ and $\be_2(u)$ is an $u$-hill. If $q>1$ and $h_m=g^{l^{q-1}}$, then $\be_2(u)=g^{r^p}\cdots g^r\be_2(v_1)=g^{r^p}\cdots g^rgg^l\cdots g^{l^{q-1}}$ and $\be_2(u)$ is a $(p,q-1)$-landscape. If $h_m\neq g^{l^{q-1}}$ (or $h_m\neq g$ if $q=1$), then observe that
$$\be_2(v_1)=ghh^l\cdots h^{l^q}\,,$$
where $h^{l^q}=h_m$ and $h^{l^i}=(h^{l^{i+1}},g^{l^{i+1}},g^{l^i})$ for $i\in\{0,\cdots,q-1\}$. Therefore,
$$\be_2(u)=h^{r^{p+1}}\cdots h^rhh^l\cdots h^{l^q}$$
and $\be_2(u)$ is a $(p+1,q)$-landscape with $p+1\leq m$ and $q\leq n$.
\end{proof}

\begin{lem}
Let $u=g_0g_1\cdots g_{2n-1}g_{2n}$ be a gorge with $g_0=g=g_{2n}$. Then $\al(w)\preceq\al(v)$ for $v=g_0\cdots g_n$ and $w=g_{2n}\cdots g_n$.
\end{lem}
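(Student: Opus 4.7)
I would proceed by induction on $n$. When $n \leq 1$ the canyon has length at most three and the two halves $v, w$ coincide, so $\al(v) = \al(w)$ trivially.

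For the inductive step at $n \geq 2$, first consider the case $g_{n-1} = g_{n+1}$. A single uplift of the river $g_n$ collapses the triple $g_{n-1} g_n g_{n+1}$ to a single $g_{n-1}$, producing a shorter canyon $u' = g_0 \cdots g_{n-1} g_{n+2} \cdots g_{2n}$ that is still a gorge by the confluence of river-upliftings. The halves of $u'$ are exactly $v, w$ with their last letter $g_n$ removed, so the inductive hypothesis gives $\al(w') \preceq \al(v')$. To reinsert the last letter: if $\up(g_{n-1}) \geq 2$ then $g_{n-1}^l \neq g_{n-1}^r$, and the equality $g_{n-1} = g_{n+1}$ forces $a_{n-1} = b_{n-1}$ (both are the unique direction from the common parent to the common child $g_n$); if instead $g_n = 1$, the convention $\al(\cdot\,1) = \al(\cdot)$ gives $\al(v) = \al(v')$ and $\al(w) = \al(w')$. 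Either way $\al(w) \preceq \al(v)$.

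Now consider the case $g_{n-1} \neq g_{n+1}$, which I would treat by contradiction. Assume $\al(w) \succ \al(v)$ and let $k$ be the smallest coordinate on which the two $\al$-words disagree; then $g_i = g_{2n-i}$ for all $i \leq k$, and (with $l \prec r$) the ``wrong orientation'' forces $g_{k+1} = g_k^l$ and $g_{2n-k-1} = g_k^r$. Consider the sub-canyon $\tilde u = g_k \cdots g_{2n-k}$ (with $\si(\tilde u) = \tau(\tilde u) = g_k$), whose halves $\tilde v, \tilde w$ already disagree at position zero with $\al(\tilde w) \succ \al(\tilde v)$. By Lemma \ref{lem71}, $\be_2(\tilde u)$ is either the single letter $g_k$ or a $(p,p)$-landscape with peak $h$ of height $\up(g_k) + p$ satisfying $h^{r^p} = g_k = h^{l^p}$. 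Since $h^l \neq h^r$ whenever $\up(h) \geq 2$, and since $k < n$ forces $g_k \neq 1$ hence $\up(h) \geq 2$, the nontrivial case requires $p \geq 2$. If $\be_2(\tilde u) = g_k$ then $\tilde u$ is itself a gorge, and (when $k \geq 1$) the inductive hypothesis applied to $\tilde u$ (of strictly smaller half-length $n-k$) gives $\al(\tilde w) \preceq \al(\tilde v)$, contradicting the wrong orientation at position zero.

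What remains---the branch with a nontrivial $(p,p)$-landscape, and the branch $k=0$ (so $\tilde u = u$)---is what I expect to be the main obstacle. By confluence, $u$ reduces via upliftings interior to $\tilde u$ to the landscape
\[
u_{\mathrm{red}} = g_0 \cdots g_{k-1}\,[\be_2(\tilde u)]\,g_{2n-k+1} \cdots g_{2n},
\]
whose peak $h$ sits at height $\up(g_k) + p$ and whose only rivers are the two boundary copies of $g_k$ flanking the interior $(p,p)$-landscape. I would show that no sequence of upliftings of $u_{\mathrm{red}}$ can shrink it to the single letter $g$, which contradicts $u$ being a gorge. The key intuition is that the outer symmetric shell extends only through $k$ layers (the symmetry $g_i = g_{2n-i}$ fails precisely at $i = k+1$), so the only collapsing upliftings available peel the outer layers against the two boundary $g_k$ valleys; after at most $k$ such collapses the peak $h$---still at height $\up(g_k) + p$---is exposed without any remaining rivers that could interact with it. Making this survival-of-the-peak argument precise, by tracking the heights and rivers through a nested induction on the number of outer layers remaining, is the technical heart of the proof.
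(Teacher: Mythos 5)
Your reduction is sound as far as it goes: Case 1 ($g_{n-1}=g_{n+1}$) correctly collapses to a shorter gorge, and in Case 2 the sub-branch where $k\geq 1$ and $\be_2(\tilde u)=g_k$ does yield a contradiction from the inductive hypothesis. But the two branches you defer --- $k=0$, and $\be_2(\tilde u)$ a nontrivial $(p,p)$-landscape --- are precisely the content of the lemma, and they are not proved. Worse, the ``survival of the peak'' heuristic you propose for closing them does not hold up. Upliftings do not preserve ridges: once the letters adjacent to the peak $h$ are themselves replaced by new letters of height $\up(h)+1$ (which is exactly what happens as the two boundary rivers $g_k$ propagate upliftings inward and outward), the letter $h$ becomes a river and is then consumed by a further uplifting. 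So nothing forces $h$ to persist in $\be_2(u_{\mathrm{red}})$. Nor does a height/ground count give a contradiction: one only gets $h\in\ep(u_{\mathrm{red}})\subseteq\ep(\be_2(u))=\ep(g)$, i.e.\ $\up(h)=\up(g_k)+p\leq\up(g)$, which merely bounds $p\leq k$ and is perfectly consistent. Finally, the $k=0$ branch is not even reached by your $u_{\mathrm{red}}$ construction, since there $\be_2(\tilde u)=\be_2(u)=g$ is a single letter and $u_{\mathrm{red}}=g$ carries no information; yet $k=0$ (the two halves of the gorge disagreeing at the very first step, $g_1=g^l$ versus $g_{2n-1}=g^r$) is the irreducible core case that your induction never resolves at any level.

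For comparison, the paper avoids this impasse entirely by interpolating a chain of downhills from $g$ to $g_n$: setting $u_i=g_0\cdots g_{n+i}$ and $v_i=\be_2(u_i)\,g_{n+i-1}\cdots g_n$, one gets $v_0=v$ and $v_{n-1}=w$, and it suffices to show $\al(v_i)\preceq\al(v_{i-1})$ for each $i$. This follows from Lemma \ref{lem71}: since $u$ is a gorge, $\be_2(u_i)$ cannot be a $(p,q)$-landscape (its peak would exceed height $\up(g)$ and survive in the ground of $\be_2(u)$), so it must be a $u_i$-hill, and the definition of $u_i$-hill forces the new downhill to branch off the old one via $g_k^l$ at a point where the old one went to $g_k^r$; this is exactly a lexicographic decrease. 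That asymmetry in Lemma \ref{lem71} is the mechanism your argument would need to import to settle the cases you have left open.
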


\begin{proof}
Fix $i\in\{1,\cdots,n-1\}$ and let $u_i$ be the prefix $g_0\cdots g_{n+i}$ of $u$.  Thus each $u_i$ is a valley with river $g_n$ and
$$g=\be_2(u)=\be_2(\be_2(u_i)g_{n+i+1}\cdots g_{2n})\,.$$ 
By Lemma \ref{lem71}, $\be_2(u_i)$ must be an $u_i$-hill. Therefore, 
$$v_i=\be_2(u_i)g_{n+i-1}\cdots g_n$$
is a downhill from $g$ to $g_n$. Note also that $v_{n-1}=w$ since $\be_2(u_{n-1})=g_0g_{2n-1}=g_{2n}g_{2n-1}$. Set $v_0=v$, another downhill from $g$ to $g_n$. Since all words $\al(v_i)$ have length $n$, this lemma becomes proved once we show that $\al(v_i)\preceq\al(v_{i-1})$ for all $1\leq i\leq n-1$. 

Note that $v_1=\be_2(u_1)g_n=\be_2(v_0g_{n+1})g_n$. If $g_{n+1}=g_{n-1}$, then $v_1=v_0$. If $g_{n+1}\neq g_{n-1}$, then 
$$\be_2(u_1)=g_0\cdots g_kg_k^l\cdots g_k^{l^{n-1-k}}\,,$$ 
for some $0\leq k< n-1$ such that $g_k^{l^{n-1-k}}=g_{n+1}$, $g_k^l\neq g_{k+1}$ and $g_k^{l^{i-2}c}\neq g_k^{l^i}$ for $2\leq i\leq n-1-k$, by Lemma \ref{lem71}. Hence, $g_{k+1}=g_k^r$. If $s=\al (g_0\cdots g_k)$, then $sl$ is a prefix of $\al(v_1)$, while $sr$ is a prefix of $\al(v_0)$. Thus $\al(v_1)\preceq\al(v_0)$ as wanted. 

Assume now that $1<i<n$. Then $\be_2(u_i)=\be_2(\be_2(u_{i-1})g_{n+i})$ is an $u_i$-hill. But then $\be_2(u_i)$ is also a $\be_2(u_{i-1})g_{n+i}$-hill, again by Lemma \ref{lem71}. Similarly to the previous case, we can now deduce that $\al(\be_2(u_i)g_{n+i-1})\preceq\al(\be_2(u_{i-1}))$; whence $\al(v_i)\preceq\al(v_{i-1})$.
\end{proof}

We have now all the ingredients necessary to prove that $E_g$ has no idempotents below the main diagonal.

\begin{prop}\label{idEg}
The canonical ``egg-box'' representation $E_g$ of $\Dcc_g$ has no idempotents below the main diagonal or, in other words, there are no idempotents $u$ in $\Dcc_g$ such that $\al(\la_r(u))\prec\al(\cevl{\la_l(u)})$.
\end{prop}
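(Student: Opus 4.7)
The plan is to obtain the statement as a direct corollary of the immediately preceding lemma on gorges, after a careful matching of notation. Suppose $u\in\Dcc_g$ is an idempotent, and set $n=\up(g)$. By Proposition \ref{idgorges}$(i)$, the canyon $w_0=\la_r(u)*\la_l(u)$ is a gorge with $\si(w_0)=\tau(w_0)=g$; since $\la_r(u)$ is a downhill from $g$ to $1$ and $\la_l(u)$ is an uphill from $1$ to $g$, the gorge $w_0$ is a valley of length $2n+1$ whose unique river is the letter $1$ sitting in the middle position.

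Writing $w_0=g_0g_1\cdots g_{2n}$ with $g_0=g_{2n}=g$ and $g_n=1$, the first half is $g_0\cdots g_n=\la_r(u)$ and the second half, read backwards, is $g_{2n}\cdots g_n=\cevl{\la_l(u)}$. The preceding lemma applied to $w_0$ with $v=g_0\cdots g_n$ and $w=g_{2n}\cdots g_n$ then yields
$$\al(\cevl{\la_l(u)})=\al(w)\preceq\al(v)=\al(\la_r(u))\,.$$
In particular, the strict inequality $\al(\la_r(u))\prec\al(\cevl{\la_l(u)})$ is impossible, so no idempotent of $\Dcc_g$ lies strictly below the main diagonal of $E_g$.

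There is essentially no obstacle to overcome: the entire content of the proposition has been absorbed into the gorge-inequality lemma proved just above, so the only work is to identify the two downhills $\la_r(u)$ and $\cevl{\la_l(u)}$ as the halves $v$ and $w$ arising from the gorge associated with the idempotent $u$, and to translate the geometric phrase ``below the main diagonal'' into the inequality $\al(\la_r(u))\prec\al(\cevl{\la_l(u)})$ on the indices that label columns and rows of $E_g$.
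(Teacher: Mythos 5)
Your proposal is correct and follows exactly the paper's own argument: apply Proposition \ref{idgorges}$(i)$ to see that $\la_r(u)*\la_l(u)$ is a gorge, then invoke the preceding gorge lemma with $v=\la_r(u)$ and $w=\cevl{\la_l(u)}$ to obtain $\al(\cevl{\la_l(u)})\preceq\al(\la_r(u))$. The only difference is that you spell out the bookkeeping (length $2n+1$, river $1$ at the middle position, identification of the two halves) that the paper leaves implicit.
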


\begin{proof}
If $u=\la_l(u)*\la_r(u)$ is an idempotent of $E_g$, then $\la_r(u)*\la_l(u)$ is a gorge. By the previous lemma, $\al(\cevl{\la_l(u)})\preceq\al(\la_r(u))$ and $u$ is not below the main diagonal of $E_g$.
\end{proof}

About the $\Hc$-classes above the main diagonal in $E_g$, it is not immediately which ones contain idempotents. However, since $\ftx$ is idempotent generated, we can say something more in this regard. Note that if $S$ idempotent generated and $e$ and $f$ are two $\Dc$-related idempotents, then there exists a sequence $e_0,e_1,\cdots,e_n$ of idempotents such that 
$$e=e_0\Rc e_1\Lc e_2\Rc\cdots\Rc e_{n-1}\Lc e_n=f\,.$$

Let $\Gamma_g=(V,E)$ be the (non-oriented) graph with set of vertices 
$$V=L_1^-(X,g)=\{v_{g,1},\cdots,v_{g,2^{i-1}}\}$$
of all downhills from $g$ to $1$, and set of edges
$$E=\{(v_{g,j},v_{g,k})\,|\;j<k \mbox{ and } \cevl{v_{g,k}}*v_{g,j} \mbox{ is an idempotent}\}\,.$$
Thus, the vertices of $\Gamma_g$ can be seen as representing the idempotents $\cevl{v_{g,j}}*v_{g,j}$ in the main diagonal of $E_g$, while the edges represent the idempotents of $E_g$ above the main diagonal. The next result is now a trivial consequence of the observation made above about the $\Dc$-related idempotents of idempotent generated semigroups.

\begin{prop}
The graph $\Gamma_g$ is connected.
\end{prop}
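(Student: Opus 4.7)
The plan is to reduce the proposition to the general fact about $\Dc$-related idempotents recalled just before the statement, combined with the structural information of Proposition \ref{idEg}. I would fix two arbitrary vertices $v_{g,j}$ and $v_{g,k}$ of $\Gamma_g$ and consider the (unique) idempotents $e=\cevl{v_{g,j}}*v_{g,j}$ and $f=\cevl{v_{g,k}}*v_{g,k}$ sitting in the diagonal $\Hc$-classes $\Hcc_{g,j,j}$ and $\Hcc_{g,k,k}$. Both are idempotents of $\Dcc_g$ (one checks $v_{g,j}*\cevl{v_{g,j}}$ is a gorge by Lemma \ref{updownhill}, and likewise for $v_{g,k}$), so they are $\Dc$-related. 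Since $\ftx$ is idempotent generated by Proposition \ref{weakly}, the recalled fact yields a chain
\[
e=e_0\Rc e_1\Lc e_2\Rc\cdots\Lc e_n=f
\]
of idempotents inside $\Dcc_g$.

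The key step is to translate this chain into a walk in $\Gamma_g$. Write each $e_i\in\Hcc_{g,a_i,b_i}$; by Proposition \ref{idEg} we must have $a_i\leq b_i$. When $a_i=b_i$, $e_i$ is the diagonal idempotent that I identify with the vertex $v_{g,a_i}$ of $\Gamma_g$; when $a_i<b_i$, the mere existence of the idempotent $e_i$ guarantees that $\{v_{g,a_i},v_{g,b_i}\}$ is an edge of $\Gamma_g$. The relations $e_i\Rc e_{i+1}$ and $e_i\Lc e_{i+1}$ are equivalent to $a_i=a_{i+1}$ and $b_i=b_{i+1}$ respectively (by Corollary \ref{R} applied inside $\Dcc_g$), so consecutive pairs $\{a_i,b_i\}$ and $\{a_{i+1},b_{i+1}\}$ always share at least one index. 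This is exactly the incidence condition needed to splice the associated vertices and edges into a walk in $\Gamma_g$ from $v_{g,j}$ (attached to $e_0$) to $v_{g,k}$ (attached to $e_n$). Since $j$ and $k$ were arbitrary, $\Gamma_g$ is connected.

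The substantive input is Proposition \ref{idEg}: without the constraint $a_i\leq b_i$ one could not match an intermediate $e_i$ to a vertex or an edge of $\Gamma_g$, and the dictionary between $\Hc$-coordinates and vertex/edge incidences would fail. That proposition is the genuine difficulty, and it has already been established; once it is available, the remainder is a routine alternation between rows and columns of the canonical egg-box $E_g$ and presents no further obstacle.
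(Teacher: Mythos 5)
Your proof is correct and is essentially the paper's own argument: the paper presents the proposition as ``a trivial consequence'' of the recalled chain property for $\Dc$-related idempotents in idempotent generated semigroups, and your write-up simply makes explicit the translation of such a chain $e_0\Rc e_1\Lc\cdots\Lc e_n$ into a walk in $\Gamma_g$, using Proposition~\ref{idEg} to place every $e_i$ on or above the diagonal and Corollary~\ref{R} to read off the shared row/column indices. The only cosmetic quibble is the citation of Proposition~\ref{weakly} for idempotent generation (Proposition~\ref{reg} is the relevant statement), which does not affect the argument.
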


We can say something also about the product of two elements of $\mx$. In the next result we prove that if $u\odot v\in E(\mx)$ for $u,v\in\mx$, then $u\Rc u\odot v$ or $v\Lc u\odot v$.

\begin{prop}\label{uvid}
Let $u,v\in\mx$. If $u\odot v\in E(\mx)$, then $u\Rc u\odot v$ or $v\Lc u\odot v$.
\end{prop}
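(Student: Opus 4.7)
The plan is to analyze $w = u \odot v$ through the central valley $V_1 = \la_r(u) * \la_l(v)$. By Lemma \ref{valelem}.(iv), $w = \la_l(u) * P * \la_r(v)$ where $P = \be_2(V_1)$, and Lemma \ref{lem71} classifies $P$ into four shapes: a single letter (so $\ka(u) = \ka(v)$); a downhill from $\ka(u)$ down to $\ka(v)$; an uphill from $\ka(u)$ up to $\ka(v)$; or a $(p,q)$-landscape with $p, q \geq 1$ and peak $g$ strictly above both $\ka(u)$ and $\ka(v)$.

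In the first three shapes the conclusion is immediate from Corollary \ref{R}. If $P$ is a downhill or a single letter then $\la_l(w) = \la_l(u)$, giving $u \Rc w$; if $P$ is an uphill or a single letter then $\la_r(w) = \la_r(v)$, giving $v \Lc w$. Only the $(p,q)$-landscape shape with $p, q \geq 1$ remains, and I would assume $w$ is idempotent in that case and derive a contradiction.

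Under that assumption Proposition \ref{idgorges}.(i) forces the canyon $\la_r(w) * \la_l(w) = P_R * V_2 * P_L$ to be a gorge (reducing to $g$), where $V_2 = \la_r(v) * \la_l(u)$ and $P_R, P_L$ are the right and left halves of $P$. By confluence I first reduce $V_2$ inside the canyon to $Q = \be_2(V_2)$, and then classify $Q$ again by Lemma \ref{lem71}. When $Q$ is a hill or a single letter, $P_R * Q * P_L$ is a down-then-up canyon from $g$; an analysis of the uplift cascade in the spirit of Lemma \ref{updownhill} shows that this canyon is a gorge only if its ascending half is the reverse of its descending half, which forces the letters immediately adjacent to $g$ on the two sides to coincide and hence $g^l = g^r$, contradicting the definition of $\Gx$.

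When $Q$ is itself a $(p'', q'')$-landscape with peak $g''$, I would instead invoke the ground monotonicity $\ep(N) \subseteq \ep(\be(N))$ observed in the discussion preceding Lemma \ref{odot}: being a gorge would force $\ep(g'') = \ep(Q) \subseteq \ep(g)$, hence $g'' \preceq g$. To reach a contradiction I would trace the uplift cascade of $V_2$ upward from its bottom $1$ river and compare it with the cascade in $V_1$ that built the peak $g$: because the descending and ascending halves of $V_2$ are $\la_r(v)$ and $\la_l(u)$ (the opposite choice from $V_1$), the very first triple formed has its left and right entries exchanged relative to the corresponding triple of $V_1$; this swap should propagate through the cascade, placing the eventual peak $g''$ outside the ternary subtree $\ep(g)$. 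The hardest part is making this swap-and-tree argument rigorous at arbitrary heights, that is, verifying inductively that no triple produced along the cascade accidentally reenters $\ep(g)$.
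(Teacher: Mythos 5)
Your reduction of $w=u\odot v$ to the shape of $P=\be_2(\la_r(u)*\la_l(v))$ is correct and is exactly how the paper begins: in the three shapes where $P$ is a letter or a hill, Corollary \ref{R} gives $\la_l(w)=\la_l(u)$ or $\la_r(w)=\la_r(v)$, hence $u\Rc w$ or $v\Lc w$, and only the case where $P$ is a $(p,q)$-landscape with $p,q\geq 1$ remains. The gap is in how you close that case. Your pivotal claim for the sub-case where $Q=\be_2(\la_r(v)*\la_l(u))$ is a hill or a letter --- that a canyon can be a gorge only if its ascending half is the reverse of its descending half, so that the letters adjacent to the peak must coincide --- is false. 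The canyon $gg^rg^cg^lg$ reduces as $gg^rg^cg^lg\xr{g^c}gg^rgg^lg\xr{*}g$ and is therefore a gorge, yet its two halves are not reverses of one another and the letters adjacent to the endpoints are $g^r\neq g^l$; more generally $\la_r(g)*\la_l(g)$ is always a gorge (Lemma \ref{valelem}.$(i)$) without any such symmetry. What is true is only a one-directional statement: for a gorge one has $\al(\cevl{\la_l(w)})\preceq\al(\la_r(w))$, which is the lemma proved just before Proposition \ref{idEg} and requires a genuine induction along the uplift cascade. Your second sub-case ($Q$ a landscape) is, as you concede, an unverified sketch, so neither branch of the hard case is actually established.

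The idea you are missing is that the paper has already packaged all of this combinatorics into Proposition \ref{idEg}: there are no idempotents $w$ with $\al(\la_r(w))\prec\al(\cevl{\la_l(w)})$. Once $P$ is a $(p,q)$-landscape with peak $g=\ka(w)$ and $p,q\geq1$, the word $g^rgg^l$ is a subword of $w$, so $\al(\la_r(w))$ begins with $l$ while $\al(\cevl{\la_l(w)})$ begins with $r$; hence $\al(\la_r(w))\prec\al(\cevl{\la_l(w)})$ and $w\notin E(\mx)$ immediately. No analysis of the second valley $\la_r(v)*\la_l(u)$, of $Q$, or of the gorge $\la_r(w)*\la_l(w)$ is needed at all. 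If you prefer to argue from the gorge criterion of Proposition \ref{idgorges}.$(i)$ directly, the statement you must prove is precisely that one-sided lexicographic inequality for gorges, not the (false) symmetry of their two halves.
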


\begin{proof}
Assume that $w=u\odot v\in\Dcc_g$ for some $g\in\Gx$ and that $u$ is not $\Rc$-related with $w$ and $v$ is not $\Lc$-related with $w$. Note that
$$w=\la_l(u)*\be_2(\la_r(u)*\la_l(v))*\la_r(v)\,.$$
By Corollary \ref{R} and Lemma \ref{lem71}, $\be_2(\la_r(u)*\la_l(v))$ must be a $(p,q)$-landscape since neither $u$ is $\Rc$-related with $w$, nor $v$ is $\Lc$-related with $w$. Consequently $g^rgg^l$ is a subword of $w$, $l$ is a prefix of $\al(\la_r(w))$ and $r$ is a prefix of $\al(\cevl{\la_l(w)})$. We have shown that
$$\al(\la_r(w))\prec\al(\cevl{\la_l(w)})\, ;$$
whence $w$ is not an idempotent.
\end{proof}

\subsection{Comparing $\Dcc_g$ with $\Dcc_{g^l}$, $\Dcc_{g^r}$ and $\Dcc_{g^c}$}

Let $g\in\Gix$ and let $s,t\in\{l,r\}^*$ of length $j$ and $k$, respectively, less than or equal to $i$. Note that $j$ or $k$ are $0$ if $s$ or $t$ are the empty word, respectively. For each $n\in\{1,\cdots, j\}$ and each $m\in\{1,\cdots, k\}$, let $s_n$ be the prefix of $s$ of length $n$ and $t_m$ be the prefix of $t$ of length $m$. Set
$$\Dcc_g^{s,t}=\{u\in M\,|\; g^{s_j}\cdots g^{s_1}gg^{t_1}\cdots g^{t_k} \mbox{ is a subword of } u\}\subseteq\Dcc_g\,.$$
Thus $\Dcc_g^{s,t}$ is the set of all mountains $u$ such that $s$ and $t$ are prefixes of $\al(\cevl{\la_l(u)})$ and $\al(\la_r(u))$, respectively.
Note further that $\Dcc_g^{l,l}$, $\Dcc_g^{l,r}$, $\Dcc_g^{r,l}$ and $\Dcc_g^{r,r}$ are a partition of $\Dcc_g$ into four parts if $i\geq 2$. 

Let $F_g^{s,t}$ denote the subsemigroup $\Dcc_g^{s,t}\cup\{0\}$ of $F_g$. The next result gathers information about $\Dcc_g^{r,l}$ and $F_g^{r,l}$ already obtained in previous results.

\begin{prop}
Let $g\in\Gix$ with $i\geq 2$ and $u,v\in\mx$.
\begin{itemize}
\item[$(i)$] If $u\odot v\in\Dcc_g$ but $u,v\not\in\Dcc_g$, then $u\odot v\in \Dcc_g^{r,l}$.
\item[$(ii)$] $F_g^{r,l}$ is a zero semigroup, that is, $u\odot v=0$ for all $u,v\in F_g^{r,l}$.
\end{itemize}
\end{prop}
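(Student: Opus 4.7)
Both items hinge on analysing $w=u\odot v=\la_l(u)*\be_2(\la_r(u)*\la_l(v))*\la_r(v)$ by applying Lemma~\ref{lem71} to the valley $\la_r(u)*\la_l(v)$, whose only river is the trough at $1$. That lemma leaves three possibilities for $\be_2(\la_r(u)*\la_l(v))$: (a) a $(p,q)$-landscape with peak $g'$ of height $\up(u)+p=\up(v)+q>\max\{\up(u),\up(v)\}$ satisfying $g'^{r^q}=\ka(u)$ and $g'^{l^p}=\ka(v)$; (b) a hill from $\ka(u)$ to $\ka(v)$ (which requires $\up(u)\ne\up(v)$); or (c) just the single letter $\ka(u)=\ka(v)$ (which requires $\up(u)=\up(v)$ and $\ka(u)=\ka(v)$). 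Gluing $\la_l(u)$ on the left and $\la_r(v)$ on the right creates no new rivers at the junctions, so this is already the canonical form of $w$; the peak of $w$ is $g'$ in case~(a) and has height $\max\{\up(u),\up(v)\}$ in cases~(b)--(c).

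For~(i), suppose $w\in\Dcc_g$ while $u,v\notin\Dcc_g$. Then $\ka(u),\ka(v)\ne g$, so Corollary~\ref{R}(iii) together with Lemma~\ref{odot}(ii) forces $\up(u),\up(v)<\up(g)$. This rules out cases (b) and (c), in which $\up(w)$ would be at most $\max\{\up(u),\up(v)\}<\up(g)$. We are therefore in case~(a) with $g'=g$ and $p,q\geq 1$, so the $(p,q)$-landscape contains the contiguous subword $g^rgg^l$ around its peak, and this subword appears in $w$. Equivalently, $\al(\cevl{\la_l(w)})$ starts with $r$ and $\al(\la_r(w))$ starts with $l$, which is exactly the defining condition for $w\in\Dcc_g^{r,l}$.

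For~(ii), let $u,v\in\Dcc_g^{r,l}$, so $\up(u)=\up(v)=\up(g)$ and both endpoints of $\la_r(u)*\la_l(v)$ equal $g$; in particular case~(b) of the trichotomy is vacuous. If case~(c) were to hold, then $\la_r(u)*\la_l(v)$ would be a gorge $g_0g_1\cdots g_{2\up(g)}$ with $g_0=g=g_{2\up(g)}$, and the gorge lemma preceding Proposition~\ref{idEg} would yield $\al(\cevl{\la_l(v)})\preceq\al(\la_r(u))$. But the left hand side begins with $r$ (since $v\in\Dcc_g^{r,l}$) and the right hand side with $l$ (since $u\in\Dcc_g^{r,l}$), contradicting $l\prec r$ in the lexicographic order used for $E_g$. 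Hence case~(a) must hold with $p,q\geq 1$, producing a peak for $w$ of height $\up(g)+p>\up(g)$. Therefore $u\odot v\notin\Dcc_g$, and the product of $u$ and $v$ in $F_g^{r,l}$ is $0$.

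The only genuinely non-formal step is excluding case~(c) in~(ii); this is where the lexicographic convention $l\prec r$ encoded in $E_g$ becomes indispensable. The remaining content is bookkeeping driven by Lemma~\ref{lem71} and the information stored in $\al(\la_r(\cdot))$ and $\al(\cevl{\la_l(\cdot)})$.
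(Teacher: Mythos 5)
Your proof is correct and takes essentially the same route as the paper: part $(i)$ is precisely the argument in the proof of Proposition \ref{uvid} (the trichotomy of Lemma \ref{lem71} forces $\be_2(\la_r(u)*\la_l(v))$ to be a $(p,q)$-landscape, hence $g^rgg^l$ is a subword of $w$), and part $(ii)$ excludes the gorge case via the lemma preceding Proposition \ref{idEg}, which is exactly the content the paper invokes by citing that proposition. The only blemish is a harmless index swap in your case (a): consistency with $\up(g')=\up(u)+p=\up(v)+q$ requires $g'^{r^p}=\ka(u)$ and $g'^{l^q}=\ka(v)$ rather than $g'^{r^q}=\ka(u)$ and $g'^{l^p}=\ka(v)$.
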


\begin{proof}
Note that $(i)$ follows from the proof of Proposition \ref{uvid}, while $(ii)$ follows from Proposition \ref{idEg}.
\end{proof}

The next result is the key ingredient to prove that $F_g^{l,l}$ and $F_g^{r,r}$ are isomorphic to $F_{g^l}$ and $F_{g^r}$, respectively.

\begin{lem}\label{gorgell}
Let $u=gg_1\cdots g_{2n-1} g$ be a canyon with $g_1=g_{2n-1}$. Then $u$ is a gorge \iff\ $n=1$ or $v=g_1\cdots g_{2n-1}$ is a gorge.
\end{lem}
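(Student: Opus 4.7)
My plan is to split the biconditional into three assertions and handle them in the order $n=1$, then ``$v$ gorge $\Rightarrow$ $u$ gorge'' for $n\geq 2$, then the hard ``$u$ gorge $\Rightarrow$ $v$ gorge'' for $n\geq 2$.

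The two easy directions require little beyond inspection. For $n=1$ the word $u = g g_1 g$ has $g_1\in\{g^l,g^r\}$ (forced by $u$ being a valley), so $g_1$ is a river whose two neighbours are both $g$, and a single uplifting turns $u$ into $g$; hence $u$ is always a gorge when $n=1$. For $n\geq 2$ and $v$ a gorge, every river of $v$ occupies an interior position of $v$ whose two neighbours are inherited unchanged when $v$ is placed between the two copies of $g$ in $u$; so each uplifting of $v$ is also an uplifting of $u$ at the matching position, and the reduction $v\xr{*}g_1$ transfers to $u\xr{*}g g_1 g\xr{*}g$.

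The content lies in the converse. Assume $n\geq 2$ and $u\xr{*}g$. Since the uplifting rewriting system is noetherian and locally confluent, I may schedule the reductions freely, and I would first uplift only those rivers sitting strictly inside $v$. This phase produces $u\xr{*}gwg$, where $w=\be_2(v)$. By confluence any continuation of this reduction must also terminate at $\be_2(u)=g$, so $gwg\xr{*}g$. It therefore suffices to force $w=g_1$, which is equivalent to $v$ being a gorge. Since $w$ has no rivers and $\si(w)=\tau(w)=g_1$, Lemma \ref{lem71} applied to $v$ leaves only two possibilities: $w=g_1$, or $w$ is a $(p,q)$-landscape with peak $g'$; in the second case $\si(w)=\tau(w)$ forces $p=q$ and $g_1=g'^{r^p}=g'^{l^p}$. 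I would eliminate $p=1$ using the defining constraint $g'^l\neq g'^r$ for $\up(g')\geq 2$, together with the observation that $g_1\neq 1$ because the valley $v$ of length $2n-1\geq 3$ has $\up(g_1)\geq n-1\geq 1$; this yields $\up(g')=\up(g_1)+p\geq \up(g)+1$.

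To close the argument I would invoke the easy fact that upliftings never decrease the height $\up$ of a landscape (a length-preserving uplifting introduces a letter of strictly greater height, while a length-reducing one leaves the remaining letters untouched), so $\up(\be_2(gwg))\geq\up(gwg)\geq\up(g')>\up(g)$; in particular $\be_2(gwg)\neq g$, contradicting $gwg\xr{*}g$. Hence the $(p,p)$-landscape case cannot occur and $w=g_1$. The main obstacle is precisely this last deduction: the confluence trick reduces everything to a single height comparison, but three ingredients must be combined to execute it. Classifying $\be_2(v)$ via Lemma \ref{lem71}, eliminating the $p=1$ boundary case via the asymmetry $g'^l\neq g'^r$ in $\Gx$ together with the height lower bound on $g_1$, and exploiting the monotonicity of $\up$ under upliftings are all short steps individually, but assembling them correctly is the delicate point of the proof.
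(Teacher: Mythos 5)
Your proof is correct, and its skeleton coincides with the paper's: the two easy directions are handled by direct upliftings, and the hard direction is reduced, via confluence of the uplifting system, to showing that $\be_2(v)$ cannot be a $(p,q)$-landscape, the only alternative to $\be_2(v)=g_1$ offered by Lemma \ref{lem71}. Where you diverge is in how that case is excluded. The paper notes that $g=\be_2(g\,\be_2(v)\,g)$ forces $\ep(\be_2(v))\subseteq\ep(g)$ and that no $(p,q)$-landscape from $g_1$ to $g_1$ has ground contained in $\ep(g)$; since every element of $\ep(g)$ other than $g$ itself has height below $\up(g)$, this is essentially a height bound on the peak, with the boundary case ``peak $=g$'' killed by $g^l\neq g^r$. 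You replace the ground invariant by the observation that upliftings never decrease the height of a landscape, so for $p\geq 2$ the peak $g'$ with $\up(g')>\up(g)$ would survive into $\be_2(g\,\be_2(v)\,g)$ and contradict its being $g$, while the case $p=1$ (peak of height exactly $\up(g)$) is ruled out separately from $g'^l\neq g'^r$ and $g_1\neq 1$. Both routes are sound; yours avoids importing the behaviour of $\ep$ under upliftings from Section 4 at the cost of one extra case split. One small point worth spelling out: in a length-reducing uplifting the claim that the height is preserved (not merely not increased) rests on the fact that the deleted letter $g_{i+1}$ equals the surviving neighbour $g_{i-1}$, so the maximal letter height of the word cannot drop; ``the remaining letters are untouched'' alone only gives an upper bound.
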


\begin{proof}
There are two cases to consider: $g_1=g_{2n-1}=g^l$ or $g_1=g_{2n-1}=g^r$. Since they are similar, we shall prove only the case $g_1=g_{2n-1}=g^l$. So, assume that $g_1=g_{2n-1}=g^l$. Without loss of generality, we can assume also that $n>1$. Further, it is obvious that $u$ is a gorge if $v$ is a gorge. Hence, we only need to prove that if $u=gg^lg_2\cdots g_{2n-2}g^lg$ is a gorge for some $n>1$, then $v=g^lg_2\cdots g_{2n-2}g^l$ is a gorge also. 

By Lemma \ref{lem71}, $\be_2(v)$ is either $g^l$ or a $(p,q)$-landscape. However, $\ep(\be_2(v))\subseteq\ep(g)$ since $g=\be_2(u)=\be_2(g\be_2(v)g)$. Note now that there are no $(p,q)$-landscapes $v_1$ form $g^l$ to $g^l$ with $\ep(v_1)\subseteq\ep(g)$; whence $\be_2(v)=g^l$ and $v$ is a gorge.
\end{proof}

We introduce now the following two mappings:
$$\begin{array}{rccc}
\varphi_g^l:\!\!\!&\Dcc_{g^l}\!\!&\to\!&\Dcc_g^{l,l}\\ &u&\mapsto& \la_l(u)\,g\,\la_r(u)
\end{array} \quad\;\mbox{ and }\;\quad
\begin{array}{rccc}\varphi_g^r:\!\!\!&\Dcc_{g^r}\!\!&\to\!&\Dcc_g^{r,r} \\ &u&\mapsto&\la_l(u)\,g\,\la_r(u)
\end{array}\;.$$
They are well defined since $\ka(u)=g^l$ if $u\in\Dcc_{g^l}$, and $\ka(u)=g^r$ if $u\in \Dcc_{g^r}$. We extend both $\varphi_g^l$ and $\varphi_g^r$ to mappings 
$$\varphi_g^l:F_{g^l}\to F_g^{l,l}\quad\mbox{ and }\quad \varphi_g^r:F_{g^r}\to F_g^{r,r}\,,$$
respectively, by setting $0\varphi_g^l=0=0\varphi_g^r$.

\begin{prop}\label{Dll}
The mappings $\varphi_g^l:F_{g^l}\to F_g^{l,l}$ and $\varphi_g^r:F_{g^r}\to F_g^{r,r}$ are isomorphisms. Thus, a mountain $u_1g^lgg^lu_2$ {\rm[}$u_1g^rgg^ru_2${\rm ]} is an idempotent \iff\ the mountain $u_1g^lu_2$ {\rm [}$u_1g^ru_2${\rm ]} is an idempotent too.
\end{prop}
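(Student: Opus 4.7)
The plan is to prove the statement for $\varphi_g^l$ since the case of $\varphi_g^r$ is symmetric. First I would verify well-definedness and bijectivity. For $u\in\Dcc_{g^l}$ the word $\la_l(u)\,g\,\la_r(u)$ is a valid landscape (both $g$--$g^l$ transitions are allowed since $g^l\in\{g^l,g^r\}$), its unique peak is $g$, and both letters adjacent to that peak are $g^l$, hence $\varphi_g^l(u)\in\Dcc_g^{l,l}$. Conversely every $w\in\Dcc_g^{l,l}$ decomposes uniquely as $w=w_1g^lgg^lw_2$ with $w_1g^l$ an uphill and $g^lw_2$ a downhill, so $u=w_1g^lw_2\in\Dcc_{g^l}$ satisfies $\varphi_g^l(u)=w$. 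Thus $\varphi_g^l$ is a bijection between the two $\Dc$-classes.

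The heart of the argument is compatibility with $\odot$. For $u_1,u_2\in\Dcc_{g^l}$, put $M=\la_r(u_1)*\la_l(u_2)$, a canyon with $\si(M)=\tau(M)=g^l$. Then
$$u_1*u_2=\la_l(u_1)*M*\la_r(u_2)\quad\text{and}\quad\varphi_g^l(u_1)*\varphi_g^l(u_2)=\la_l(u_1)\,g\,M\,g\,\la_r(u_2)\,.$$
All rivers of the first landscape lie inside $M$, and all rivers of the second inside $gMg$, because the final $g^l$ of $\la_l(u_1)$ has a strictly lower left neighbour (and dually on the right), so the boundary $g^l$s stay non-rivers throughout. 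By confluence of the uplifting procedure, $\be_2$ reduces these two landscapes respectively to $\la_l(u_1)*\be_2(M)*\la_r(u_2)$ and $\la_l(u_1)*\be_2(gMg)*\la_r(u_2)$. Now Lemma \ref{gorgell}, applied with inner letter $g^l$, yields the decisive dichotomy: $gMg$ is a gorge \iff\ $M$ is a gorge.

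If $M$ is a gorge, then $\be_2(M)=g^l$ and $\be_2(gMg)=g$, whence $u_1\odot u_2=\la_l(u_1)*\la_r(u_2)\in\Dcc_{g^l}$ with $\la_l(u_1\odot u_2)=\la_l(u_1)$, $\la_r(u_1\odot u_2)=\la_r(u_2)$, and $\varphi_g^l(u_1)\odot\varphi_g^l(u_2)=\la_l(u_1)\,g\,\la_r(u_2)=\varphi_g^l(u_1\odot u_2)$; the two products agree. If $M$ is not a gorge, then $\be_2(M)$ is a $(p,p)$-landscape with peak strictly above $g^l$ (and forcing $p\geq 2$), so $u_1\odot u_2\notin\Dcc_{g^l}$ and the product is $0$ in $F_{g^l}$; meanwhile $gMg$ is not a gorge by Lemma \ref{gorgell}, so $\be_2(gMg)$ has peak strictly above $g$, forcing $\varphi_g^l(u_1)\odot\varphi_g^l(u_2)\notin\Dcc_g$ and the product is $0$ in $F_g^{l,l}$. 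Hence $\varphi_g^l:F_{g^l}\to F_g^{l,l}$ is an isomorphism, and the ``furthermore'' follows by transferring idempotence across it: $u_1g^lu_2$ is an idempotent of $\mx$ \iff\ $\varphi_g^l(u_1g^lu_2)=u_1g^lgg^lu_2$ is an idempotent.

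The main obstacle is the localization claim in the second paragraph --- that $\be_2$ of the full landscape collapses only its inner canyon and never reaches $\la_l(u_1)$ or $\la_r(u_2)$. This amounts to checking that no boundary $g^l$ ever becomes a river during uplifting in the interior, which holds because its outward neighbour (inside the uphill $\la_l(u_1)$ or downhill $\la_r(u_2)$) has height strictly below $\up(g^l)$. Once this is granted, Lemma \ref{gorgell} supplies the one-line equivalence between gorges in $M$ and gorges in $gMg$, and the rest of the proof is the two-case formality above.
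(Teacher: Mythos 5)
Your proposal is correct and follows essentially the same route as the paper: bijectivity is immediate from the explicit inverse, and the homomorphism property is reduced, exactly as in the paper, to applying Lemma \ref{gorgell} to the inner canyon $M=\la_r(u)*\la_l(v)$ versus $gMg$, with the same two-case split (gorge versus non-gorge). The only difference is presentational — you justify the localization of the uplifting procedure explicitly and invoke Lemma \ref{lem71} for the non-gorge case, whereas the paper handles these points implicitly via the congruence $\approx$ together with Lemma \ref{odot} and Corollary \ref{R}.
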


\begin{proof}
We shall prove only that $\varphi_g^l$ is an isomorphism since the proof for $\varphi_g^r$ is similar and the second part of this proposition follows obviously from the first part. Note also that $\varphi_g^l$ is a bijection with inverse mapping 
$$u_1g^lgg^lu_2\mapsto u_1g^lu_2\,.$$ 
So, we only need to prove that $\varphi_g^l$ is a homomorphism.

Let $u,v\in \Dcc_{g^l}$ and let $u_1=u\varphi_g^l$ and $v_1=v\varphi_g^l$. Then $u_1=\la_l(u)\,g\,\la_r(u)$ and $v_1=\la_l(v)\,g\,\la_r(v)$. Note that $u\odot v\in \Dcc_{g^l}$ \iff\ the canyon $\la_r(u)*\la_l(v)$ is a gorge. By Lemma \ref{gorgell}, $\la_r(u)*\la_l(v)$ is a gorge \iff\ $g\,\la_r(u)*\la_l(v)\,g$ is a gorge too. Hence, $u\odot v\in D_{g^l}$ \iff\ $u_1\odot v_1\in D_g^{l,l}$. Consequently, if $u\odot v\not\in D_{g^l}$, then
$$(u\cdot v)\varphi_g^l=0\varphi_g^l=0=u_1\cdot v_1=(u\varphi_g^l)\cdot (v\varphi_g^l)\,.$$

Assume now that $u\odot v\in D_{g^l}$. Then $u\cdot v=u\odot v$ and $u_1\cdot v_1=u_1\odot v_1$. Since $u\odot v=\la_l(u)*\la_r(v)$ by Corollary \ref{R}, we conclude that
$$\begin{array}{ll}
(u\cdot v)\varphi_g^l\!\! & = \la_l(u)\,g\,\la_r(v)\ap \la_l(u)\,g\,\la_r(u)*\la_l(v)\,g\,\la_r(v) =u_1*v_1 \\ [.2cm]
&\ap u_1\odot v_1=(u\varphi_g^l)\cdot (v\varphi_g^l)\,.
\end{array}$$
We have shown that $\varphi_g^l$ is a homomorphism as wanted.
\end{proof}

We have seen so far that $\Dcc_g^{r,l}$ has no idempotents and that $\Dcc_g^{l,l}$ and $\Dcc_g^{r,r}$ are copies of $\Dcc_{g^l}$ and $\Dcc_{g^r}$, respectively. Let us look now to $\Dcc_g^{l,r}$, and assume that $g\in \Gix$ with $i\geq 3$. Note that $\Dcc_g^{l,r}$ is partitioned into the four sets $\Dcc_g^{l_c,r_c}$, $\Dcc_g^{l_o,r_c}$, $\Dcc_g^{l_c,r_o}$ and $\Dcc_g^{l_o,r_o}$. 

\begin{lem}\label{gorgercl}
Let $u=gg^rg^c\cdots g_{2n-2}g^lg$ and $v=g^lg^c\cdots g_{2n-2}g^l$ be two canyons. The following conditions are equivalent:
\begin{itemize}
\item[$(i)$] $u$ is a gorge.
\item[$(ii)$] $v$ is a gorge.
\item[$(iii)$] $gvg$ is a gorge.
\end{itemize}
\end{lem}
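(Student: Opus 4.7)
The plan is to treat $(ii)\Leftrightarrow(iii)$ and $(i)\Leftrightarrow(iii)$ separately. The first is an immediate application of Lemma~\ref{gorgell}: the canyon $gvg$ has outer letter $g$ and its inner word $v$ begins and ends with the common letter $g^l$, so the previous lemma directly yields $gvg$ is a gorge iff $v$ is a gorge.

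The substantive content is $(i)\Leftrightarrow(iii)$. The canyons $u$ and $gvg$ have the same length and agree at every position except position~$1$, where $u$ has $g^r$ and $gvg$ has $g^l$. Both are valleys from $g$ to $g$, so by Lemma~\ref{lem71}, each of $\be_2(u)$ and $\be_2(gvg)$ is either the single letter $g$ (the gorge case) or a $(p,q)$-landscape whose two endpoints are $g$. The goal is to show these two canyons fall on the same side of that dichotomy.

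The key observation is the identity $g=(g^l,g^c,g^r)$ forced by the definition of $\Gx$. Consequently, whenever the subword $g^r g^c g^l$ occurs with $g^c$ acting as a river, its uplifting produces precisely the ridge $g$. My plan is to exploit the local confluence and noetherianness of the uplifting system to first perform all upliftings strictly inside the shared middle $g^c\cdots g_{2n-2}$, reducing $u$ to $g\,g^r\,w\,g^l\,g$ and $gvg$ to $g\,g^l\,w\,g^l\,g$, where $w$ is the common river-reduction of that middle. In the base case where $w$ collapses to the single letter $g^c$, one computes directly
\[
g\,g^r\,g^c\,g^l\,g \;\xrightarrow{g^c}\; g\,g^r\,g\,g^l\,g \;\xrightarrow{g^r}\; g\,g^l\,g \;\xrightarrow{g^l}\; g,
\]
and by the same mechanism $g\,g^l\,g^c\,g^l\,g$ also reduces to $g$, so both canyons are gorges.

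The main obstacle is the inductive step, when $w$ does not collapse to $g^c$ but instead to a $(p,q)$-landscape above the ground $\ep(g^c)$. Here one must analyze the combinatorial form of $\be_2$ for each canyon via Lemma~\ref{lem71}, and show that the asymmetry at position~$1$ is immaterial: either both canyons admit a full reduction to $g$ through a sequence of upliftings that repeatedly produce ridges equal to $g$ in the outer layer, or both fail to do so and each reduces to a nontrivial $(p,q)$-landscape with the same peak $h\succ g$. The delicate bookkeeping is in tracking which new ridges appear from the successive upliftings near position~$1$, and verifying that the roles of $g^l$ and $g^r$ in the decomposition $g=(g^l,g^c,g^r)$ make these two reduction paths mirror each other step by step.
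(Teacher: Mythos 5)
Your reduction of $(ii)\Leftrightarrow(iii)$ to Lemma \ref{gorgell} is correct and is exactly what the paper does, and your base-case computation (when the shared middle collapses to $g^c$) is fine. The problem is that the step you yourself label ``the main obstacle'' is the entire content of the lemma, and you do not prove it. Asserting that the two reduction paths ``mirror each other step by step'' is not a proof and is not even literally accurate: near position $1$ the two canyons reduce by \emph{different} mechanisms --- in $g\,g^r\cdots g^l\,g$ an uplifting of a river between two \emph{distinct} neighbours creates a new ridge (and one needs that ridge to be exactly $(g^l,g^c,g^r)=g$), whereas in $g\,g^l\cdots g^l\,g$ an uplifting between two \emph{equal} neighbours deletes letters. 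There is no formal left/right symmetry to invoke, so ``either both collapse to $g$ or both fail'' remains an unproved claim; nothing in your sketch rules out that $u$ collapses to $g$ while $gvg$ stabilises at a nontrivial $(p,q)$-landscape, or conversely.

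The missing ingredient is a ground/height argument. The paper works with the common inner valley $v_1=g^c\cdots g_{2n-2}g^l$ and uses Lemma \ref{lem71} to see that $\be_2(v_1)$ is either the uphill $g^cg^l$ or a $(p,q)$-landscape from $g^c$ to $g^l$; it then observes that if either canyon is a gorge, then $\ep(\be_2(v_1))\subseteq\ep(g)$, while no $(p,q)$-landscape from $g^c$ to $g^l$ can have its ground inside $\ep(g)$ (its peak would have to be an element of $\ep(g)$ of height at least $\up(g)$, hence $g$ itself, which the defining conditions of a $(p,q)$-landscape exclude). This forces $\be_2(v_1)=g^cg^l$, after which both implications are two-line computations: $u\xr{*}gg^rg^cg^lg\xr{*}g$ and $v\xr{*}g^lg^cg^l\xr{*}g^l$. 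Your proposal contains no substitute for this containment argument, so as written it has a genuine gap precisely where the lemma is nontrivial.
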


\begin{proof}
The equivalence between $(ii)$ and $(iii)$ follows from Lemma \ref{gorgell}. So, we only need to prove the equivalence between $(i)$ and $(ii)$. Assume first that $v$ is a gorge and let $v_1=g^c\cdots g_{2n-2}g^l$. Since $g^l=\be_2(v)=\be_2(g^l\be_2(v_1))$, then $\be_2(v_1)=g^cg^l$ by Lemma \ref{lem71}. Hence 
$$u\xr{*}gg^r\be_2(v_1)g= gg^rg^cg^lg\xr{g^c}gg^rgg^lg\xr{*} g\,,$$
and $u$ is a gorge.

Assume now that $u$ is a gorge. By Lemma \ref{lem71}, either $\be_2(v_1)=g^cg^l$ or $\be_2(v_1)$ is a $(p,q)$-landscape. Since $g=\be_2(u)=\be_2(gg^r\be_2(v_1)g)$, we also know that $\ep(\be_2(v_1))\subseteq\ep(g)$. However, observe that there is no $(p,q)$-landscape $w$ from $g^c$ to $g^l$ such that $\ep(w)\subseteq \ep(g)$. Consequently $\be_2(v_1)=g^cg^l$, and so $\be_2(v)=g^l$ and $v$ is a gorge.
\end{proof}

We have now the following corollary.

\begin{cor}\label{idemp23}
\begin{itemize}
\item[$(i)$] A mountain $v_1g^lgg^lg^cv_2\in\Dcc_g^{l,l_c}$ is an idempotent \iff\ the mountain $v_1g^lgg^rg^cv_2\in\Dcc_g^{l,r_c}$ is an idempotent.
\item[$(ii)$] A mountain $v_1g^cg^rgg^rv_2\in\Dcc_g^{r_c,r}$ is an idempotent \iff\ the mountain $v_1g^cg^lgg^rv_2\in\Dcc_g^{l_c,r}$ is an idempotent.
\end{itemize}
\end{cor}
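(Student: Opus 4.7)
The plan is to reduce the idempotency question to a gorge question via Proposition \ref{idgorges}$(i)$, and then appeal to the equivalence in Lemma \ref{gorgercl}.

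For part $(i)$, let $u = v_1 g^l g g^l g^c v_2$ and $u' = v_1 g^l g g^r g^c v_2$; both are mountains with peak $g$, so
\begin{equation*}
\la_l(u) = v_1 g^l g = \la_l(u'), \qquad \la_r(u) = g g^l g^c v_2, \qquad \la_r(u') = g g^r g^c v_2.
\end{equation*}
By Proposition \ref{idgorges}$(i)$, $u$ is an idempotent iff the canyon $\la_r(u)*\la_l(u)$ is a gorge, and similarly for $u'$. Since $v_2$ ends in $1$ and $v_1$ starts with $1$ (as they are the outer portions of hills of mountains), after the $*$-identification at the junction the two canyons take the form
\begin{equation*}
c_u = g\, g^l\, g^c\, w\, g^l\, g \qquad \text{and} \qquad c_{u'} = g\, g^r\, g^c\, w\, g^l\, g,
\end{equation*}
with identical interior $w$. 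These are precisely the canyons in clauses $(iii)$ and $(i)$ of Lemma \ref{gorgercl}, which asserts that $c_u$ is a gorge iff $c_{u'}$ is a gorge. Hence $u$ is idempotent iff $u'$ is, proving $(i)$.

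Part $(ii)$ is handled by the left-right dual of the same argument. The mountains $v_1 g^c g^r g g^r v_2$ and $v_1 g^c g^l g g^r v_2$ produce, after the analogous $*$-identifications, canyons of the form $g g^r w' g^c g^r g$ and $g g^r w' g^c g^l g$ with common interior $w'$. These canyons are the left-right mirrors of the ones in part $(i)$, and the corresponding dual of Lemma \ref{gorgercl} (obtained by repeating its proof with the symbols $l$ and $r$ systematically swapped, which is harmless since Lemma \ref{gorgell} and the key input Lemma \ref{lem71} are manifestly left-right symmetric) gives the required equivalence.

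The only technical subtlety I anticipate is the bookkeeping at the $*$-junction: one must verify carefully that after merging the trailing $1$ of $v_2$ with the leading $1$ of $v_1$, the two canyons $c_u$ and $c_{u'}$ share literally the same interior, so that Lemma \ref{gorgercl} applies to them simultaneously. Once this is verified, the corollary is an immediate consequence of Proposition \ref{idgorges}$(i)$ and Lemma \ref{gorgercl}.
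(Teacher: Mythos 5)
Your proposal is correct and follows exactly the paper's proof: reduce idempotency to the gorge condition via Proposition \ref{idgorges}$(i)$, identify the two canyons $\la_r(u)*\la_l(u)$ with the forms in clauses $(iii)$ and $(i)$ of Lemma \ref{gorgercl}, and obtain part $(ii)$ from the left-right dual of that lemma. Your extra care with the $*$-junction and the justification of the dual are sound but add nothing beyond the paper's two-line argument.
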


\begin{proof}
The first statement follows from Lemma \ref{gorgercl} and Proposition \ref{idgorges}. The second statement is the dual of the first; whence it follows from the dual of Lemma \ref{gorgercl}.
\end{proof}

Let $\phi_g^l:F_g^{l,l_c}\to F_g^{l,r_c}$ be the mapping defined by 
$$0\phi_g^l=0\quad\mbox{ and }\quad (v_1g^lgg^lg^cv_2)\phi_g^l= v_1g^lgg^rg^cv_2\,,$$
for each mountain $v_1g^lgg^lg^cv_2\in\Dcc_g^{l,l_c}$. Analogously, consider the mapping $\phi_g^r:F_g^{r_c,r}\to F_g^{l_c,r}$ defined by
$$0\phi_g^r=0\quad\mbox{ and }\quad (v_1g^cg^rgg^rv_2)\phi_g^r= v_1g^cg^lgg^rv_2\,,$$
for each mountain $v_1g^cg^rgg^rv_2\in\Dcc_g^{r_c,r}$. In the next result we show that these two mappings are isomorphisms.

\begin{prop}\label{Dllc}
The mappings $\phi_g^l$ and $\phi_g^r$ are isomorphisms. Further, in $\mx$, $u\Rc (u\phi_g^l)$ for any mountain $u\in \Dcc_g^{l,l_c}$ and $v\Lc (v\phi_g^r)$ for any mountain $v\in\Dcc_g^{r_c,r}$.
\end{prop}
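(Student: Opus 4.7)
The plan is to establish three claims about $\phi_g^l$ (with the parallel statements for $\phi_g^r$ following dually): (i) $\phi_g^l$ is a well-defined bijection between $F_g^{l,l_c}$ and $F_g^{l,r_c}$; (ii) $u\,\Rc\,u\phi_g^l$ in $\mx$ for every $u\in\Dcc_g^{l,l_c}$; and (iii) $\phi_g^l$ is a semigroup homomorphism. For (i) and (ii), any $u\in\Dcc_g^{l,l_c}$ decomposes uniquely as $v_1 g^l g*g g^l g^c v_2$, where $\la_l(u)=v_1 g^l g$ and $\la_r(u)=g g^l g^c v_2$; replacing the second $g^l$ with $g^r$ produces a valid mountain in $\Dcc_g^{l,r_c}$, since $g^c$ coincides with $g^{r_c}$ and is therefore a child of $g^r$. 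This is evidently a bijection, with inverse swapping $g^r$ back to $g^l$ at the same slot. Because the replacement only affects the right hill, $\la_l(u\phi_g^l)=\la_l(u)$, and Corollary \ref{R}(i) gives $u\,\Rc\,u\phi_g^l$; the dual statement $v\,\Lc\,v\phi_g^r$ follows from Corollary \ref{R}(ii) since $\phi_g^r$ leaves $\la_r$ unchanged.

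For (iii), fix $u_1,u_2\in\Dcc_g^{l,l_c}$ and set $u_i'=u_i\phi_g^l$. The same Lemma \ref{lem71}-based analysis used to show that $F_g^{r,l}$ is a zero semigroup tells us that $u_1\odot u_2\in\Dcc_g$ exactly when the canyon $\la_r(u_1)*\la_l(u_2)$ is a gorge, in which case $u_1\odot u_2=\la_l(u_1)*\la_r(u_2)$ and this product automatically lies in $\Dcc_g^{l,l_c}$. That canyon has the form $g g^l g^c\,W\,g^l g$ for some middle $W$, while the canyon $\la_r(u_1')*\la_l(u_2')$ for the images differs only at its second letter (the second $g^l$ becomes $g^r$, while $\la_l(u_2)=\la_l(u_2')$) and so reads $g g^r g^c\,W\,g^l g$. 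Lemma \ref{gorgercl}, in the form (i)$\Leftrightarrow$(iii), is precisely the statement that these two canyons are gorges simultaneously. Hence $u_1\cdot u_2=0$ \iff\ $u_1'\cdot u_2'=0$, and when both are non-zero the direct computation $(u_1\cdot u_2)\phi_g^l=(\la_l(u_1)*\la_r(u_2))\phi_g^l=\la_l(u_1)*\la_r(u_2')=u_1'\cdot u_2'$ completes the homomorphism check.

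The argument for $\phi_g^r$ runs symmetrically, but with one wrinkle: the canyon $\la_r(v_a)*\la_l(v_b)$ now has the form $g g^r\,W\,g^c g^r g$ and the image-canyon reads $g g^r\,W\,g^c g^l g$, so the transposition $g^l\leftrightarrow g^r$ happens at the end rather than at the start. Lemma \ref{gorgercl} does not apply directly to this shape; instead one invokes the dual of Lemma \ref{gorgercl} obtained by the $l\leftrightarrow r$ symmetry of $\ftx$ --- the same dual implicitly used in the proof of Corollary \ref{idemp23}(ii). The main, and essentially only, obstacle in the proof is justifying the application of this dual, which can be handled either by repeating the proof of Lemma \ref{gorgercl} verbatim with $l$ and $r$ swapped, or by combining Lemma \ref{gorgercl} with canyon reversal, which clearly preserves gorges.
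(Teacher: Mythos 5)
Your main line is correct and is essentially the paper's own (which is only a sketch): repeat the proof of Proposition \ref{Dll} with Lemma \ref{gorgercl} in place of Lemma \ref{gorgell}, use the dual of Lemma \ref{gorgercl} for $\phi_g^r$, observe the maps are clearly bijections, and get the $\Rc$/$\Lc$ statement from Corollary \ref{R}. Your identification of the two canyons $gg^lg^cWg^lg$ and $gg^rg^cWg^lg$ and the application of Lemma \ref{gorgercl} in the form $(i)\Leftrightarrow(iii)$ is exactly the intended argument, and the zero/non-zero bookkeeping is right.

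However, your second proposed justification of the dual lemma fails: canyon reversal does \emph{not} preserve gorges. For $g\in\G_2(X)$ the canyon $\la_r(g)*\la_l(g)=gg^r1g^lg$ is a gorge by Lemma \ref{valelem}.$(i)$, but its reverse $gg^l1g^rg$ uplifts to $gg^l\bar{g}g^rg$ with $\bar{g}=(g^r,1,g^l)\neq g$; since grounds only grow under uplifting and $\bar{g}\notin\ep(g)$, this cannot reduce to $g$, so the reversed canyon is not a gorge. (Conceptually, the canyon of $\cevl{v_{g,k}}*v_{g,j}$ is the reverse of the canyon of its transpose $\cevl{v_{g,j}}*v_{g,k}$, yet Proposition \ref{idEg} forbids idempotents below the diagonal of $E_g$ while off-diagonal idempotents above it do occur, so reversal cannot preserve gorges.) Moreover, even granting reversal, reversing Lemma \ref{gorgercl} would yield a statement about canyons beginning with $gg^l$, whereas $\phi_g^r$ requires one about canyons beginning with $gg^r$; the symmetry that produces the correct dual is reversal \emph{combined with} the recursive letter-level swap $h\mapsto(h^r,h^c,h^l)$. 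So you must rely on your first option — redoing the proof of Lemma \ref{gorgercl} mirror-imaged — which does go through because Lemma \ref{gorgell}, Lemma \ref{lem71}, and the notion of $(p,q)$-landscape are all left--right symmetric. With that substitution the proof is complete.
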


\begin{proof}
The proof that $\phi_g^l$ is an isomorphism is similar to the proof of Proposition \ref{Dll} using Lemma \ref{gorgercl} instead of Lemma \ref{gorgell}. We omit the details here. We can show that $\phi_g^r$ is a homomorphism too using a similar strategy based on the dual of Lemma \ref{gorgercl}. Thus both $\phi_g^l$ and $\phi_g^r$ are isomorphisms since, clearly, they are bijections. The second statement is just a consequence of Corollary \ref{R}.
\end{proof}

There are four other isomorphisms that can be obviously derived from restriction of $\phi_g^l$ and $\phi_g^r$. We list them in the following corollary:

\begin{cor}\label{Dlclc}
The following mappings are isomorphisms: 
$$\begin{array}{ll}
\phi_g^l|_{F_g^{l_c,l_c}}:F_g^{l_c,l_c}\to F_g^{l_c,r_c}\,;\qquad &
\phi_g^l|_{F_g^{l_o,l_c}}:F_g^{l_o,l_c}\to F_g^{l_o,r_c}\,; \\ [.4cm]
\phi_g^r|_{F_g^{r_c,r_c}}:F_g^{r_c,r_c}\to F_g^{l_c,r_c}\,;\qquad &
\phi_g^r|_{F_g^{r_c,r_o}}:F_g^{r_c,r_o}\to F_g^{l_c,r_o}\,.
\end{array}$$
Furthermore, the semigroups $F_g^{l_c,l_c}$, $F_g^{l_c,r_c}$ and $F_g^{r_c,r_c}$ are isomorphic to the completely 0-simple semigroup $F_{g^c}$.
\end{cor}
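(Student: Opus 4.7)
The plan is to obtain the four displayed isomorphisms as restrictions of the isomorphisms $\phi_g^l$ and $\phi_g^r$ already established in Proposition \ref{Dllc}. The crucial ingredient is that each set $F_g^{s,t}$ is a subsemigroup of $F_g$: indeed, if $u,v\in\Dcc_g^{s,t}$ and $u\odot v\in\Dcc_g$, then Lemma \ref{odot}.$(ii)$ forces $\ka(u\odot v)=g$ (the three mountains all have height $\up(g)$), and Corollary \ref{R} then yields $\la_l(u\odot v)=\la_l(u)$ and $\la_r(u\odot v)=\la_r(v)$, so the initial segments of $\al(\cevl{\la_l})$ and $\al(\la_r)$ are preserved, and $u\odot v\in\Dcc_g^{s,t}$.

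With that in hand, each of the four restrictions is easily seen to be well-defined onto its stated target by inspecting the defining substitution. For example, $\phi_g^l|_{F_g^{l_c,l_c}}$ sends $v_1\,g^cg^lgg^lg^c\,v_2$ to $v_1\,g^cg^lgg^rg^c\,v_2$; the left factor $v_1\,g^c$ is unchanged, so $\al(\cevl{\la_l})$ still begins with $l_c$, while the $\al$-prefix of $\la_r$ becomes $r_c$ (namely $r$ followed by whichever of $l,r$ is the direction from $g^r$ to $g^c$). The remaining three restrictions are verified by the analogous symmetric substitutions. Bijectivity and the homomorphism property are then inherited automatically from Proposition \ref{Dllc}, because products taken inside a subsemigroup coincide with the corresponding products in the ambient semigroup.

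For the last assertion, the four isomorphisms just obtained already yield $F_g^{l_c,l_c}\cong F_g^{l_c,r_c}\cong F_g^{r_c,r_c}$. To connect these with $F_{g^c}$, I would apply Proposition \ref{Dll} with $g^l$ in the role of $g$ and with the direction $d\in\{l,r\}$ chosen so that $g^c=(g^l)^d$, obtaining an isomorphism $\varphi_{g^l}^d:F_{g^c}\to F_{g^l}^{d,d}$, where $F_{g^l}^{d,d}$ denotes the subsemigroup of $\Dcc_{g^l}\cup\{0\}$ consisting of mountains that contain the subword $g^cg^lg^c$. By the same subsemigroup argument as above, the restriction of $\varphi_g^l$ to $F_{g^l}^{d,d}$ (which inserts the peak $g$ between the two $g^l$'s flanking the peak $g^l$ of a mountain in $\Dcc_{g^l}$) is an isomorphism onto $F_g^{l_c,l_c}$; composing the two provides $F_{g^c}\cong F_g^{l_c,l_c}$. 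Finally, $F_{g^c}$ is completely 0-simple as the principal factor of a regular $\Dc$-class in the regular semigroup $\mx$ (the class $\Dcc_{g^c}$ contains the idempotent $\be(g^c)$). The main obstacle is purely notational: carefully tracking how each $F_g^{s,t}$ is characterized by the first two letters of $\al(\cevl{\la_l})$ and $\al(\la_r)$, and verifying that every substitution in sight preserves the appropriate prefixes; all substantive work has already been carried out in the preceding lemmas and propositions.
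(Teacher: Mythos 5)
Your proposal is correct and follows essentially the same route as the paper: the four maps are obtained as restrictions of $\phi_g^l$ and $\phi_g^r$ to the subsemigroups $F_g^{s,t}$ (the paper simply declares them ``clearly isomorphisms,'' whereas you supply the closure argument via Lemma \ref{odot} and Corollary \ref{R}), and the identification of $F_g^{l_c,l_c}$ with $F_{g^c}$ is exactly the paper's composition $\varphi_{g^l}^{l}\circ\varphi_g^l$ or $\varphi_{g^l}^{r}\circ\varphi_g^l$ according as $g^c=g^{l^2}$ or $g^c=g^{lr}$. The only cosmetic remark is that ``principal factor of a regular $\Dc$-class'' by itself yields only $0$-simplicity; complete $0$-simplicity uses in addition that $\Dcc_{g^c}$ has trivial $\Hc$-classes and finitely many $\Rc$- and $\Lc$-classes (Corollaries \ref{R} and \ref{size}), a fact the paper also leaves implicit.
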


\begin{proof}
These four mappings are clearly isomorphisms. Thus $F_g^{l_c,l_c}$, $F_g^{l_c,r_c}$ and $F_g^{r_c,r_c}$ are isomorphic semigroups.  If $g^c=g^{l^2}$, then $\varphi_{g^l}^l:F_{g^c}\to F_{g^l}^{l,l}$ is an isomorphism by Proposition \ref{Dll}. Hence $\varphi_{g^l}^l\circ\varphi_g^l$ is a one-to-one homomorphism whose image is $F_g^{l_c,l_c}$. Therefore,
$$\varphi_{g^l}^l\circ\varphi_g^l:F_{g^c}\to F_g^{l_c,l_c}$$
is an isomorphism. If $g^c=g^{lr}$, then we can show similarly that $\varphi_{g^l}^r\circ\varphi_g^l:F_{g^c}\to F_g^{l_c,l_c}$ is an isomorphism.
\end{proof} 

We point out that the previous two results allow us to identify the idempotents of $\Dcc_g$ if we know the idempotents of $\Dcc_{g^l}$ and $\Dcc_{g^r}$, except for the idempotents in $\Dcc_g^{l_o,r_o}$. More research is needed to identify these latter idempotents. Although $\Dcc_g^{l_o,r_o}$ sometimes has idempotents, it is very common for $\Dcc_g^{l_o,r_o}$ not to have idempotents. For example, only two of the twenty-four $\Dc$-classes of $\ftt$ corresponding to elements $g$ of height $4$ have idempotents in $\Dcc_g^{l_o,r_o}$.

\section{Some considerations for future research}

The structure of the semigroups $\ftx$ is not yet fully understood and more research is needed. For example, as it becomes clear from the previous section, the identification of idempotents is not immediate, which makes it difficult to understand the structure of the biordered set of idempotents of $\ftx$. Also, working with triples nested inside triples is not an easy task. So, the pursue of better, alternative ways to represent and work with the elements of $\ftx$ is advisable. There are also decidability and other questions, some of them stated at the end of Section 5, related to the fact that all regular semigroups weakly generated by $|X|$ idempotents are homomorphic images of $\ftx$. Such questions can now be addressed. 

Dolinka and Ru\v{s}kuc \cite{doru13} proved that every finitely presented group $G$ (which includes all finite groups) is a maximal subgroup of a free regular idempotent generated semigroup $RIG(B_G)$ on some finite band $B_G$. Note that $RIG(B_G)$ is a homomorphic image of $\ft_k$ for $k=|B_G|$ since $RIG(B_G)$ is generated by the idempotents of the biordered set $B_G$. Thus $RIG(B_G)$ also strongly divides $\ftt$. One can ask now which finitely presented groups and which finite groups appear as maximal subgroups of free regular idempotent generated semigroups also weakly generated by $k$ idempotents. In particular, one can investigate (i) what kind of finite groups can we get as maximal subgroups of free regular idempotent generated semigroups weakly generated by two idempotents? (ii) Can we get all of them? If not, we may consider a hierarchy of classes of finite groups, each class defined by the finite groups which are maximal subgroups of free regular idempotent generated semigroups weakly generated by at most $k$ idempotents.

To address the previous questions it will be important to study the homomorphic images of $\ftt$. In particular, we should investigate which semigroups are regular subsemigroups of homomorphic images of $\ftt$. Note that we have already proved that each weakly finitely idempotent generated semigroups strongly divides $\ftt$, that is, it is a homomorphic image of a regular subsemigroup of $\ftt$. Thus, it is natural to conjecture that all weakly finitely idempotent generated semigroups are regular subsemigroups of homomorphic images of $\ftt$.  A positive answer to this conjecture is related to a positive answer to the following question: can any surjective homomorphism $\varphi:T\to S$, where $T$ is a regular subsemigroup of $\ftt$, be extended to a homomorphism $\phi:\ftt\to S_1$? Can we choose $S_1$ to be weakly generated by two idempotents? Note that not all homomorphic images of $\ftt$ are weakly generated by two idempotents (see Example 1).

A different line of research is the attempt to generalize the results presented here for the non-idempotent case. In other words, is there a regular semigroup $F(X)$ weakly generated by a set $X$ such that all other regular semigroups weakly generated by $X$ are homomorphic images of $F(X)$? Of course, we should start with the case $X=\{x\}$, but we should not expect a simple structure for $F(\{x\})$, if $F(\{x\})$ exists. Most certainly, $F(\{x\})$ will contain a copy of $\ftt$ weakly generated by the set of idempotents $\{xx',x'x\}$.

The generalization of the results obtained here for the non-idempotent case may be of interest for the theory of e-varieties of regular semigroups. This theory began in the 1990s \cite{ha1,ks1} and a great effort was made on the development of a Birkhoff-type theorem for e-varieties of regular semigroups. Unfortunately, only partial results were found, namely for the e-varieties of locally inverse semigroups \cite{au94,au95} and for the e-varieties of regular $E$-solid semigroups \cite{ks2}, and the interest on general e-varieties of regular semigroups diminished considerably. These partial results were based on the concepts of `bifree objects' and `biequational classes'.  

Now, if one can show that $F(X)$ exists, then one should explore if its universal properties make it a good candidate for the development of a Birkhoff-type theorem that works for all e-varieties of regular semigroups. In this regard, we should point out that, for e-varieties $\bf V$ with bifree objects on a set $X$, the concept of bifree object could be alternatively defined as follows: it is a semigroup $F{\bf V}(X)\in{\bf V}$ weakly generated by $X$ such that all mappings $\theta:X\to S\in{\bf V}$ can be extended to a homomorphism $\varphi:F{\bf V}(X)\to S$.
Concerning the usual concept of bifree object, we add the condition of $F{\bf V}(X)$ being weakly generated by $X$, but drop the necessity of the mappings $\theta$ being matched and the uniqueness in the homomorphism extensions $\varphi$.

\vspace*{.5cm}

\noindent{\bf Acknowledgments}: This work was partially supported by CMUP (UID/ MAT/00144/2019), which is funded by FCT with national (MCTES) and European structural funds through the programs FEDER, under the partnership agreement PT2020. The author also would like to acknowledge the importance of the GAP software \cite{gap}, and its Semigroup package \cite{mitchell}, in the research presented in this paper: the several simulations computed in GAP allowed the emergence of the pattern used to construct the set $\Gx$.

\end{document}